\documentclass[11pt]{amsart}
\usepackage{amsmath,amssymb,amsthm,mathtools,mathrsfs}
\usepackage{enumitem}
\usepackage{xcolor}
\usepackage[colorlinks=true,linkcolor=blue,citecolor=blue,urlcolor=blue]{hyperref}
\usepackage{microtype}
\usepackage[a4paper,margin=30mm]{geometry}
\allowdisplaybreaks
\newtheorem{theorem}{Theorem}[section]
\newtheorem{proposition}[theorem]{Proposition}
\newtheorem{lemma}[theorem]{Lemma}

\newtheorem{remark}[theorem]{Remark}
\newtheorem{definition}[theorem]{Definition}
\newtheorem*{definition*}{Definition}
\newtheorem*{remark*}{Remark}
\numberwithin{equation}{section}

\newcommand{\R}{\mathbb R}
\newcommand{\C}{\mathbb C}
\newcommand{\Z}{\mathbb Z}
\newcommand{\Nn}{\mathbb N}
\newcommand{\F}{\mathscr F_\kappa}
\newcommand{\FInv}{\mathscr F_\kappa^{-1}}
\newcommand{\Sscr}{\mathscr S}
\newcommand{\Scirc}{\mathscr S_0}
\newcommand{\Dcal}{\mathcal D}
\newcommand{\M}{\mathcal M}
\newcommand{\MO}{\mathcal M^{\mathcal O}}
\newcommand{\ip}[2]{\langle #1,#2\rangle}
\newcommand{\norm}[1]{\lVert #1\rVert}

\newcommand{\supp}{\operatorname{supp}}
\newcommand{\diam}{\operatorname{diam}}

\newcommand{\dotB}{\dot B}
\newcommand{\dotF}{\dot F}
\newcommand{\bseq}{\dot b}
\newcommand{\fseq}{\dot f}
\newcommand{\Qdim}{\mathbf N}
\newcommand{\1}{\mathbf 1}

\title[A self-dual Dunkl wavelet frame]{A self-dual Dunkl wavelet frame and Besov--Triebel--Lizorkin spaces}
\author{The Anh Bui}
\address{School of Mathematical and Physical Sciences, Macquarie University, NSW 2109, Australia}
\email{the.bui@mq.edu.au}
\date{}
\subjclass[2020]{42B35, 42C15, 33C52, 46E35}
\keywords{Dunkl transform, wavelet frame, Parseval frame, differential atom, Besov space, Triebel--Lizorkin space, fractional chain rule, nonlinear approximation, Jackson estimate}

\begin{document}
\begin{abstract}
We construct a self-dual Parseval wavelet frame associated with the Dunkl transform and use it to characterize the corresponding Besov and Triebel–Lizorkin spaces for arbitrary smoothness and the full range of admissible indices. We also establish differential atomic decompositions with atoms supported on dilated Euclidean dyadic cubes, satisfying ordinary derivative estimates and polynomial moment conditions. A key difficulty is the interaction between Euclidean dyadic cubes and the orbit geometry associated with the Dunkl operators. As applications, we obtain fractional chain rules and a Jackson inequality for nonlinear wavelet approximation in \(L^p(dw)\).
\end{abstract}

\maketitle

\section{Introduction}
Wavelets and atomic decompositions play a central role in the theory of
Besov and Triebel--Lizorkin spaces.  In the Euclidean setting, the discrete
transform of Frazier and Jawerth \cite{FrazierJawerth1985,FrazierJawerth}
recovers functions from dyadic coefficients and characterizes their
smoothness in sequence spaces.  Such characterizations are useful in
approximation theory, singular integrals and partial differential equations.
Atomic decompositions provide a complementary description by localized
functions with size, smoothness and cancellation conditions.  Together,
these two tools connect frequency localization with spatial regularity.

The theory has also been developed on spaces of homogeneous type.  Among
the contributions are Littlewood--Paley theory and reproducing formulas of
Han and Sawyer \cite{HanSawyer}, frame characterizations of Yang
\cite{YangFrame}, the treatment of Besov and Triebel--Lizorkin spaces by
Han, M\"uller and Yang \cite{HanMullerYang}, and the regular wavelets of Auscher and Hyt\"onen
\cite{AuscherHytonen}.  In particular, He, Wang, Yang and Yuan
\cite{HeWangYangYuan} obtained wavelet and molecular characterizations of
Besov and Triebel--Lizorkin spaces in a general homogeneous-type setting.
In a related geometric setting, Bownik, Dziedziul and Kamont
\cite{BownikDziedziulKamont} constructed Parseval wavelet frames on
Riemannian manifolds and characterized Besov and Triebel--Lizorkin spaces
on compact manifolds through their frame coefficients.
These developments show the reach of wavelet methods beyond Euclidean
spaces, but do not directly give a self-dual Parseval sampling frame
associated with the Dunkl spectral decomposition.

In the Dunkl setting, spectral multiplier kernels are localized with
respect to the orbits of a finite reflection group. A wavelet-type
decomposition of Dunkl Triebel--Lizorkin spaces was obtained in
\cite{HanLeeLiWick} under restrictions on the smoothness and integrability
indices. More recently, \cite{BuiDunkl} developed the operator-associated
Besov and Triebel--Lizorkin theory, including $L$-atomic decompositions.
The orbit geometry, however, differs from the Euclidean geometry used in
classical wavelet and atomic decompositions.

Our first result is a self-dual Parseval frame obtained from a positive
cubature formula on Euclidean dyadic cubes. It yields wavelet
characterizations of Dunkl Besov and Triebel--Lizorkin spaces for all
$s\in\R$ and the full range of admissible $p,q$, thus extending the
wavelet-type results of \cite{HanLeeLiWick}. Our second result gives a
differential atomic characterization with ordinary derivative estimates,
polynomial moments and support in a single dilated Euclidean cube. This
extends the classical-type atomic results of \cite{BuiDunkl} beyond their
restricted smoothness and integrability ranges. We also obtain fractional
chain rules and a Jackson inequality for nonlinear wavelet approximation.

We recall the setting. Let $R\subset\R^N\setminus\{0\}$ be a normalized reduced root system, $G$ its finite reflection group and $\kappa\geq0$ a $G$-invariant multiplicity function. Fix a positive subsystem $R_+\subset R$. For $\nu\in R$, write $\sigma_\nu$ for the corresponding reflection. The Dunkl measure and its homogeneous dimension are
\[
 dw(x)=\prod_{\nu\in R}|\langle x,\nu\rangle|^{\kappa(\nu)}\,dx,
 \qquad \Qdim=N+\sum_{\nu\in R}\kappa(\nu).
\]
The Dunkl operators are
\[
 T_j f(x)=\partial_j f(x)+\sum_{\nu\in R_+}\kappa(\nu)\nu_j
 \frac{f(x)-f(\sigma_\nu x)}{\langle\nu,x\rangle},
 \qquad 1\leq j\leq N.
\]
They commute, and we write $T^\beta=T_1^{\beta_1}\cdots T_N^{\beta_N}$ and $\Delta_\kappa=\sum_{j=1}^N T_j^2$. If $E$ denotes the Dunkl kernel, the Dunkl transform is
\[
 \F f(\xi)=c_\kappa^{-1}\int_{\R^N}E(-i\xi,x)f(x)\,dw(x).
\]
It is unitary on $L^2(dw)$, and
\[
 \F(T^\beta f)(\xi)=(i\xi)^\beta\F f(\xi),
 \qquad \F(-\Delta_\kappa f)(\xi)=|\xi|^2\F f(\xi).
\]
We put $L=-\Delta_\kappa$.  On the other hand, Dunkl kernel bounds naturally involve the orbit distance
\[
 d(x,y)=\min_{\sigma\in G}|x-\sigma(y)|;
\]
see \cite{AnkerDziubanskiHejna}. The distinction between Euclidean and orbit geometry is important in our construction: our frame is indexed by \emph{Euclidean} cubes, although its elements may be localized near several reflected images of their sampling points.

Let $\Dcal_j$ be the collection of ordinary Euclidean dyadic cubes of sidelength $2^{-j}$ and let $\Dcal=\bigcup_{j\in\Z}\Dcal_j$. For a measurable set $U$, write $w(U)=\int_U dw$; for a cube $Q$, let $\ell(Q)$ denote its sidelength. We use $\1_Q$ for its indicator function. Let $\Scirc'$ denote the dual of $\Scirc$, where
\[
 \Scirc=\{\varphi\in\Sscr(\R^N):
 \partial^\alpha(\F\varphi)(0)=0\text{ for every multi-index }\alpha\}.
\]
We equip $\Scirc$ with the induced Schwartz topology. Fix a real $\Phi\in C_c^\infty((0,\infty))$ satisfying
\begin{equation}\label{eq:phi-support}
 \supp\Phi\subset[1/2,2].
\end{equation}
\begin{equation}\label{eq:square-partition}
 \sum_{j\in\Z}\Phi(2^{-j}t)^2=1\qquad(t>0).
\end{equation}
Set $P_jf=\FInv[\Phi(2^{-j}|\cdot|)\F f]$.

\begin{definition}\label{def:transform-spaces}
Let $s\in\R$.
\begin{enumerate}[label=\textup{(\roman*)}]
\item If $0<p,q\leq\infty$, define $\dotB^{s}_{p,q}(dw)$ to be the set of $f\in\Scirc'$ such that
\begin{equation}\label{eq:B-transform}
 \norm{f}_{\dotB^{s}_{p,q}(dw)}
 :=\Big(\sum_{j\in\Z}
 [2^{js}\norm{P_jf}_{L^p(dw)}]^q\Big)^{1/q}<\infty.
\end{equation}
\item If $0<p<\infty$ and $0<q\leq\infty$, define $\dotF^{s}_{p,q}(dw)$ to be the set of $f\in\Scirc'$ such that
\begin{equation}\label{eq:F-transform}
 \norm{f}_{\dotF^{s}_{p,q}(dw)}
 :=\Big\|
 \Big(\sum_{j\in\Z}[2^{js}|P_jf|]^q\Big)^{1/q}
 \Big\|_{L^p(dw)}<\infty.
\end{equation}
\end{enumerate}
\end{definition}

The usual modifications apply when an exponent is infinite. Since
$\F(Lf)(\xi)=|\xi|^2\F f(\xi)$, we have
$P_j=\Phi(2^{-j}\sqrt L)$. Hence the spaces in
Definition~\ref{def:transform-spaces} agree, with identical quasi-norms
for the same cutoff, with the corresponding operator-associated spaces.
Moreover, they are independent, up to equivalent quasi-norms, of the
choice of admissible annular cutoff $\Phi$. This follows by the standard
finite-overlap Littlewood--Paley argument; see
\cite[Theorem~3.4]{BuiBuiDuongFMS}.
The equivalent coefficient descriptions furnished below by
Theorem~\ref{thm:frame-spaces} are therefore also independent of the
admissible wavelet frame $\Psi$, although the individual coefficients
depend on its choice. When $\kappa=0$, these spaces reduce to the classical
homogeneous Besov and Triebel--Lizorkin spaces.

The difference between orbit and Euclidean geometry will be visible both
in the construction of the frame and in the atomic decomposition.  The
support property for Dunkl convolution makes it possible to apply the
cubature formula to products of multiplier kernels.  This yields the exact
Parseval identity without constructing a separate dual family.

Throughout the paper, $C,c>0$ denote constants that may change from line to line. We write $A\lesssim B$ if $A\leq CB$, and $A\simeq B$ if both $A\lesssim B$ and $B\lesssim A$.

\medskip
\noindent\textbf{Main results.}\quad
For $j\in\Z$, let $K_j(x,y)$ be the kernel of the Dunkl multiplier
\[
 f\longmapsto \FInv\!\Big[\Phi(2^{-j}|\cdot|)\F f\Big].
\]
Fix a sufficiently large integer $J$. At frequency $2^j$ we sample on
$\Dcal_{j+J}$, whose cubes have sidelength $2^{-j-J}$. The fixed shift $J$
provides the fine sampling needed below. We begin with the self-dual Parseval
frame used in the wavelet characterization.

\begin{theorem}[Self-dual Parseval Dunkl wavelet frame]\label{thm:frame}
Choose arbitrarily one point $x_Q\in Q$ for each $Q\in\Dcal_{j+J}$, $j\in\Z$. Then there are positive numbers $\omega_Q\simeq w(Q)$ such that, with
\begin{equation}\label{eq-psi Q}
 \psi_Q(x)=\omega_Q^{1/2}K_j(x,x_Q),
\end{equation}
the family
\[
 \Psi=\{\psi_Q:j\in\Z,\ Q\in\Dcal_{j+J}\}
\]
is a Parseval frame of $L^2(dw)$. More precisely, for every $f\in L^2(dw)$,
\begin{equation}\label{eq:frame-reconstruction}
 f=\sum_{j\in\Z}\sum_{Q\in\Dcal_{j+J}}
 \langle f,\psi_Q\rangle\psi_Q
 \quad\text{in }L^2(dw),
\end{equation}
\[
 \|f\|_2^2=\sum_{j\in\Z}\sum_{Q\in\Dcal_{j+J}}
 |\langle f,\psi_Q\rangle|^2,
\]
and
\begin{equation}\label{eq:coefficient-sampling}
 \langle f,\psi_Q\rangle
 =\omega_Q^{1/2}P_j f(x_Q).
\end{equation}
\end{theorem}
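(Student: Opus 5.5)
The plan is to reduce everything to a positive cubature formula for Dunkl band-limited functions, applied to $|P_jf|^2$. For $r>0$, write $PW_r$ for the set of $g\in L^1(dw)\cap L^\infty$ with $\supp\F g\subset \overline{B(0,r)}$. I would first prove the following claim.

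\emph{Claim (positive cubature).} There are $J$ and $c_0$ with the following property. For every $j$ and every choice of points $x_Q\in Q$, $Q\in\Dcal_{j+J}$, there exist weights $\omega_Q>0$ with $\omega_Q\simeq w(Q)$ such that
\[
\int_{\R^N} g\,dw=\sum_{Q\in\Dcal_{j+J}}\omega_Q\,g(x_Q)\qquad\text{for all } g\in PW_{c_02^{j}},
\]
with $c_0\geq 4$.

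Granting the claim, the theorem follows quickly.

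\textbf{Reduction to $\|P_jf\|_2^2$.} Let $f\in L^2(dw)$. By Plancherel, \eqref{eq:square-partition}, and the fact that $\{0\}$ is null for $|\xi|^{2\kappa}d\xi$-type measures, we get $\|f\|_2^2=\sum_j\|P_jf\|_2^2$.

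\textbf{Symmetry of the kernel and \eqref{eq:coefficient-sampling}.} Since $\Phi$ is real and radial multipliers commute with $G$ and complex conjugation, $K_j$ is real and symmetric. Hence $P_jf(x_Q)=\int K_j(x_Q,y)f(y)\,dw(y)=\omega_Q^{-1/2}\langle f,\psi_Q\rangle$, which is \eqref{eq:coefficient-sampling}.

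\textbf{Applying the cubature to $|P_jf|^2$.} The function $|P_jf|^2=P_jf\cdot\overline{P_jf}$ is a product of two functions in $PW_{2^{j+1}}$. By the Dunkl Paley--Wiener theorem (functions of exponential type $r$ correspond to Dunkl spectrum in $B(0,r)$), the product lies in $PW_{2^{j+2}}$. This is the "support property for Dunkl convolution" mentioned in the introduction. It is integrable since $P_jf\in L^2$. The claim then gives
\[
\|P_jf\|_2^2=\sum_{Q\in\Dcal_{j+J}}\omega_Q|P_jf(x_Q)|^2=\sum_{Q\in\Dcal_{j+J}}|\langle f,\psi_Q\rangle|^2 .
\]
Summing over $j$ yields the Parseval identity.

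\textbf{Reconstruction.} The synthesis operator is bounded, and the frame operator $S=\sum\langle\cdot,\psi_Q\rangle\psi_Q$ satisfies $\langle Sf,f\rangle=\|f\|^2$. By polarization $S=I$, which gives \eqref{eq:frame-reconstruction} with unconditional convergence in $L^2$.

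For the claim I would follow the Filbir--Mhaskar / Petrushev route in two steps.

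\textbf{Step (a): Marcinkiewicz--Zygmund inequality.} The target is
\[
\sum_{Q}\int_Q|g(x)-g(x_Q)|\,dw(x)\leq C2^{-J}\|g\|_{L^1(dw)}\qquad\text{for } g\in PW_{c_02^j}.
\]
To prove it, write $g=g*_\kappa\theta_j$, where $\theta_j$ is the kernel of a multiplier $\theta(2^{-j}|\cdot|)$ with $\theta\equiv1$ on $[0,c_0]$. This gives $g(x)=\int\Theta_j(x,y)g(y)\,dw(y)$. For $x,x'$ in the same cube of side $2^{-j-J}$, the Euclidean mean value theorem gives
\[
|\Theta_j(x,y)-\Theta_j(x',y)|\lesssim 2^{-J}\frac{(1+2^jd(x,y))^{-M}}{w(B(x,2^{-j}))}.
\]
This uses the known gradient bound for Dunkl multiplier kernels: $|\nabla_x\Theta_j(x,y)|\lesssim 2^j w(B(x,2^{-j}))^{-1}(1+2^jd(x,y))^{-M}$, in orbit distance. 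Integrating in $x$ and summing over cubes, using doubling of $w$ and $\sup_y\int(1+2^jd(x,y))^{-M}w(B(x,2^{-j}))^{-1}dw(x)<\infty$, gives the inequality. In particular, for $J$ large,
\[
\tfrac12\|g\|_1\leq\sum_Q w(Q)|g(x_Q)|\leq 2\|g\|_1 .
\]

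\textbf{Step (b): positive weights by convexity.} On the subspace $V=\{(g(x_Q))_Q:g\in PW\}\subset\ell^1(w)$, define the linear functional $\Lambda(g(x_Q))=\int g\,dw$. It is well defined by step (a). Step (a) applied with $w(Q)$ replaced by perturbed weights also shows that $\Lambda$ is positive relative to the cone of sequences that dominate $\varepsilon\,w(Q)|g(x_Q)|$. The M.~Riesz/Krein extension theorem, in the form used by Petrushev--Xu for positive cubature, then produces $\omega_Q$ with $|\omega_Q-w(Q)|\leq C2^{-J}w(Q)$. Hence $\omega_Q\simeq w(Q)$ once $J$ is large.

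The main obstacle is step (a). Cubes and the mean value theorem are Euclidean, while kernel decay is only in the orbit distance $d$. One must check that the Euclidean gradient (not the Dunkl $T_j$) of $\Theta_j$ obeys the orbit-decay bound. It must also hold uniformly for the segment $[x,x']$, which may cross reflecting hyperplanes where $w$ vanishes; doubling of $w(B(\cdot,2^{-j}))$ on Euclidean balls handles the comparison of $w(B(x,2^{-j}))$ along that segment. I would first try to derive the gradient bound from the Dziubański--Hejna estimates for $T_j$ together with the representation $\partial_j=T_j-(\text{difference terms})$, where the difference terms are controlled by $G$-averages of the same kernel bound.
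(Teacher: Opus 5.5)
Your proposal is correct and is essentially the paper's proof. Both go through an $L^1$ sampling estimate with error $C2^{-J}\|g\|_1$ (you get it directly from the kernel-difference bound and Fubini, the paper via Peetre maximal functions), a Hahn--Banach perturbation of the weights $w(Q)$ into positive cubature weights, the cubature applied to $|P_jf|^2\in\Sigma^1_{2^{j+2}}$ (you justify the band limit by the Dunkl Paley--Wiener theorem, the paper by the support of $\gamma_{x,y}$), and $S_\Psi^*S_\Psi=I$ for the reconstruction.

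Two minor points. The Euclidean gradient bound you worry about is exactly \eqref{eq:kernel-euclidean-derivatives} and should just be quoted: deriving it from $\partial_j=T_j-\sum_{\alpha}\kappa(\alpha)\alpha_jD_\alpha$ would need that same gradient bound where $|\langle\alpha,x\rangle|\lesssim2^{-j}$. Your cone/M.~Riesz version of Step (b) is best replaced by the paper's argument: extend the real error functional $\int g\,dw-\sum_Qw(Q)g(x_Q)$, which is bounded by $C2^{-J}\sum_Qw(Q)|g(x_Q)|$, to $\ell^1(w)$ by Hahn--Banach, and represent it as $\sum_Q w(Q)c_Qa_Q$ with $|c_Q|\le C2^{-J}$.
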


\begin{remark*}
The points $x_Q$ are completely arbitrary subject only to $x_Q\in Q$. No center, distinguished point, or compatibility condition between different cubes is required. The constants in the sampling and frame estimates are uniform over all such choices. The cubature weights $\omega_Q$ may depend on the selected points, but always satisfy $\omega_Q\simeq w(Q)$ with uniform constants.
\end{remark*}

The frame elements are localized around the $G$-orbit of $x_Q$, but the cubes $Q$ themselves are ordinary Euclidean dyadic cubes. They also have cancellation of every polynomial order. See Proposition \ref{prop:frame-properties}. These two facts are important in passing from the transform decomposition to Euclidean dyadic sequence spaces.

For a sequence $a=\{a_Q\}_{Q\in\Dcal_{j+J},\,j\in\Z}$, set
\[
 \|a\|_{\dot b^s_{p,q}}
 :=\Big\{\sum_{j\in\Z}2^{jsq}
 \Big[\sum_{Q\in\Dcal_{j+J}}
 \big(w(Q)^{1/p-1/2}|a_Q|\big)^p\Big]^{q/p}\Big\}^{1/q},
\]
and
\[
 \|a\|_{\dot f^s_{p,q}}
 :=\Big\|\Big\{\sum_{j\in\Z}
 \Big[2^{js}\sum_{Q\in\Dcal_{j+J}}
 w(Q)^{-1/2}|a_Q|\1_Q\Big]^q
 \Big\}^{1/q}\Big\|_{L^p(dw)},
\]
with the usual modifications for infinite exponents. Define the analysis and
synthesis operators associated with $\Psi$ by
\begin{equation}\label{eq:analysis-synthesis-operators}
 S_\Psi f:=\{\langle f,\psi_Q\rangle\}_Q,
 \qquad
 T_\Psi a:=\sum_{j\in\Z}\sum_{Q\in\Dcal_{j+J}}a_Q\psi_Q.
\end{equation}
Here the coefficients of $f\in\Scirc'$ are defined by duality. The synthesis
sum is initially taken over finite sequences and, for the sequence spaces
below, is understood in $\Scirc'$. We obtain the following wavelet
characterization.

\begin{theorem}[Wavelet characterization]\label{thm:frame-spaces}
Let $s\in\R$.
\begin{enumerate}[label=\textup{(\roman*)}]
\item For $0<p,q\le\infty$, the operators
\[
 S_\Psi:\dot B^s_{p,q}(dw)\longrightarrow\dot b^s_{p,q},
 \qquad
 T_\Psi:\dot b^s_{p,q}\longrightarrow\dot B^s_{p,q}(dw)
\]
are bounded, and $T_\Psi S_\Psi f=f$ in $\Scirc'$ for every
$f\in\dot B^s_{p,q}(dw)$. In particular,
\begin{equation}\label{eq:B-frame-equivalence}
 \|f\|_{\dot B^s_{p,q}(dw)}\simeq
 \|S_\Psi f\|_{\dot b^s_{p,q}}.
\end{equation}
\item For $0<p<\infty$ and $0<q\le\infty$, the operators
\[
 S_\Psi:\dot F^s_{p,q}(dw)\longrightarrow\dot f^s_{p,q},
 \qquad
 T_\Psi:\dot f^s_{p,q}\longrightarrow\dot F^s_{p,q}(dw)
\]
are bounded, and $T_\Psi S_\Psi f=f$ in $\Scirc'$ for every
$f\in\dot F^s_{p,q}(dw)$. Consequently,
\begin{equation}\label{eq:F-frame-equivalence}
 \|f\|_{\dot F^s_{p,q}(dw)}\simeq
 \|S_\Psi f\|_{\dot f^s_{p,q}}.
\end{equation}
\end{enumerate}
\end{theorem}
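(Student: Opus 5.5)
We follow the Frazier--Jawerth scheme, using Theorem~\ref{thm:frame} for the reconstruction and the sampling identity \eqref{eq:coefficient-sampling} to read coefficients directly from $P_jf$. The kernel facts we need are standard for Dunkl multipliers. For every $M$,
\[
 |K_j(x,y)|\lesssim \frac{1}{w(B(x,2^{-j}))}\Big(1+2^jd(x,y)\Big)^{-M},
\]
and $K_j(\cdot,y)$ obeys the analogous Hölder bound in $x$ on scale $2^{-j}$, with orbit distance $d$. Since $\F K_j(\cdot,y)$ is supported in the annulus $2^{j-1}\le|\xi|\le2^{j+1}$, we also have $P_i\psi_Q=0$ unless $|i-j|\le1$. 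Before starting, we extend Theorem~\ref{thm:frame} from $L^2$ to $\Scirc'$. For $\varphi\in\Scirc$ the identity $\varphi=\sum_Q\langle\varphi,\psi_Q\rangle\psi_Q$ converges in the topology of $\Scirc$, because the coefficients $\omega_Q^{1/2}P_j\varphi(x_Q)$ decay rapidly in $j$ and in $x_Q$. Duality then gives $f=\sum_Q\langle f,\psi_Q\rangle\psi_Q$ in $\Scirc'$, which is the statement $T_\Psi S_\Psi f=f$ once the boundedness of $T_\Psi$ is in hand.

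\emph{Analysis.} By \eqref{eq:coefficient-sampling}, $|\langle f,\psi_Q\rangle|w(Q)^{-1/2}\simeq|P_jf(x_Q)|$. Write $P_jf=\sum_{|i-j|\le1}\tilde P_iP_jf$ with $\tilde P_i=\Phi(2^{-i}\sqrt L)$. Then $|P_jf(x_Q)|\le\sup_{y\in Q}|P_jf(y)|$, and we need to control this supremum by averages of $P_jf$. The tool is a Peetre-type maximal function adapted to orbits,
\[
 P_j^{*,a}f(x)=\sup_y\frac{|P_jf(y)|}{(1+2^jd(x,y))^a}.
\]
From the kernel decay and the band-limitation one gets $P_j^{*,a}f\lesssim \big(\mathcal M(|P_jf|^r)\big)^{1/r}$ for $a>\Qdim/r$, where $\mathcal M$ is the Hardy--Littlewood maximal operator for $dw$ (the measure $dw$ is doubling). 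Orbit distance only helps here, since $d(x,y)\le|x-y|$. Summing over $Q\in\Dcal_{j+J}$ gives
\[
 \sum_Q|P_jf(x_Q)|\1_Q\le P_j^{*,a}f,
\]
and Fefferman--Stein (for $F$) or the plain $L^p$ bound for $\mathcal M$ (for $B$), with $r<\min(p,q)$, yields $\|S_\Psi f\|\lesssim\|f\|$. For $B$ we also use $\sum_Q w(Q)|P_jf(x_Q)|^p\lesssim\|P_j^{*,a}f\|_p^p$.

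\emph{Synthesis.} Let $g=\sum_{k}\sum_{Q\in\Dcal_{k+J}}a_Q\psi_Q$. By band-limitation, $P_ig$ involves only $|k-i|\le1$. For $x\in R\in\Dcal_{k+J}$,
\[
 \Big|\sum_{Q\in\Dcal_{k+J}}a_Q\psi_Q(x)\Big|\lesssim\sum_Q w(Q)^{-1/2}|a_Q|\,\frac{w(Q)}{w(B(x_Q,2^{-k}))}\,(1+2^kd(x,x_Q))^{-M}.
\]
Reflect each $x_Q$ into the orbit representative nearest to $x$; there are $|G|$ reflections, and $dw$ is $G$-invariant. The usual Frazier--Jawerth lemma then bounds the sum by $\big(\mathcal M(\sum_Q(w(Q)^{-1/2}|a_Q|)^r\1_Q)\big)^{1/r}(x)$. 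Applying $P_i$ (whose kernel has the same decay) and Fefferman--Stein gives $\|g\|\lesssim\|a\|$. Convergence in $\Scirc'$ follows by pairing with $\varphi\in\Scirc$ and using the rapid decay of $\langle\psi_Q,\varphi\rangle$.

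\emph{Main obstacle.} The hardest step is the Frazier--Jawerth summation lemma. The decay of $\psi_Q$ is measured in orbit distance, while the sequence norms use Euclidean cubes. We handle this by splitting $\sum_Q$ according to which $\sigma\in G$ attains $d(x,x_Q)$. Each piece becomes $\sum_Q|b_Q|(1+2^k|\sigma x-x_Q|)^{-M}$, which is a Euclidean sum evaluated at $\sigma x$. This gives the maximal function at $\sigma x$, and $\mathcal M(h)(\sigma x)=\mathcal M(h\circ\sigma)(x)$. Since the sequence norms are not $G$-invariant, we then use $\|\mathcal M(h\circ\sigma)\|_p=\|\mathcal M h\|_p$, and the vector-valued analogue of this identity, to conclude.
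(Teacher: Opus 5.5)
The analysis step rests on a false inequality. You claim $P_j^{*,a}f\lesssim(\mathcal M(|P_jf|^r))^{1/r}$ with the Euclidean maximal operator, arguing that ``orbit distance only helps'' because $d(x,y)\le|x-y|$. This is backwards. From $d\le|\cdot|$ you get $(1+2^jd(x,y))^{-a}\ge(1+2^j|x-y|)^{-a}$, so the orbit Peetre function is the \emph{larger} one. It registers values of $P_jf$ near every reflected point $\sigma x$ with no penalty. Euclidean averages centred at $x$, by contrast, reach those values only through balls of radius $\gtrsim|x-\sigma x|$, which may be far larger than $2^{-j}$. A band-limited function concentrated near $\sigma x$, such as a multiplier kernel centred there, violates your bound once $2^j|x-\sigma x|$ is large.

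What the orbit-decaying kernel bounds actually give is Lemma~\ref{lem:peetre}, $h^*_{t,a}\lesssim[\MO(|h|^r)]^{1/r}$. Equivalently, you get a bound by $\max_{\sigma\in G}[\mathcal M(|P_jf|^r)(\sigma x)]^{1/r}$, since $\MO h(x)\simeq\max_{\sigma}\mathcal Mh(\sigma x)$. This is exactly the reflection device of your last paragraph, which already makes the same correction for the synthesis sum (there too the maximal function must be evaluated at $\sigma x$, not at $x$). So the repair is immediate. Use $\MO$, or use the $\sigma$-sum together with $\|(\mathcal Mh)\circ\sigma\|_{L^p(dw)}=\|\mathcal Mh\|_{L^p(dw)}$ and its vector-valued analogue. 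The analysis bound then holds for $r<p$, resp.\ $r<\min\{p,q\}$.

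Two smaller points. First, in the synthesis, estimate $P_i\psi_Q$ directly, as the paper does. It is $\omega_Q^{1/2}$ times the kernel of the smooth annular multiplier $m_im_j$ at scale $2^{-j}$, so it has the same orbit decay as $\psi_Q$. Applying $P_i$ to the maximal-function majorant instead would produce $\MO(\MO_r g)$, which cannot be controlled in $L^p(dw)$ for $p\le1$. Second, $P_jf=\sum_{|i-j|\le1}\tilde P_iP_jf$ with $\tilde P_i=\Phi(2^{-i}\sqrt L)$ is false for the quadratic partition \eqref{eq:square-partition}; you would need $P_i^2$. You never use this identity, though.

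Once repaired, your proof shares the synthesis estimate and the reconstruction by duality in $\Scirc'$ with the paper, but it reaches the norm equivalence by a different route. You run the classical Frazier--Jawerth scheme. You get a one-sided analysis bound from the Peetre maximal function and the synthesis bound. Then you obtain $\|f\|=\|T_\Psi S_\Psi f\|\lesssim\|S_\Psi f\|$ from reconstruction.

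The paper instead reads both directions off the two-sided sampling theorem. By \eqref{eq:coefficient-sampling}, the scale-$j$ coefficients are, up to $\omega_Q\simeq w(Q)$, the samples of $P_jf$ on $\Dcal_{j+J}$. So \eqref{eq:sampling-lp} and \eqref{eq:sampling-vector}, applied to $h_j=2^{js}P_jf$, give $\|S_\Psi f\|\simeq\|f\|$ at once, scale by scale in the Besov case. This keeps the norm equivalence independent of the convergence of frame expansions in $\Scirc'$ and of synthesis for infinite sequences. It costs nothing extra, since the fine grid ($J$ large) was already needed for the cubature formula.

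Your route uses only the upper (Peetre) bound. Granted reconstruction on all of $\Scirc'$, it also yields a membership criterion: every $f\in\Scirc'$ with $S_\Psi f$ in the sequence space lies in the corresponding function space.
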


We next turn to differential atomic decompositions. For a Euclidean
cube $Q$, let $3Q$ denote the concentric cube with three times the
sidelength.

\begin{definition}[Dunkl differential atom]\label{def:dunkl-atom}
Let $Q\in\Dcal_\nu$, let $M\in\mathbb N$, and let $L_0\in\mathbb N_0$. A $(p;M,L_0)$-Dunkl differential atom associated with $Q$ is a function $a\in C_c^{2M}(\R^N)$ such that
\[
 \supp a\subset3Q,
 \qquad
 |\partial^\beta a(x)|\le \ell(Q)^{-|\beta|}w(Q)^{-1/p},
 \quad |\beta|\le2M,
\]
and
\begin{equation}\label{eq:atom-moments}
 \int_{\R^N}x^\gamma a(x)\,dw(x)=0,
 \qquad |\gamma|\le L_0.
\end{equation}
Here and below $w(Q)^{-1/\infty}=1$.

\end{definition}
When $\kappa=0$, these are the classical smooth atoms of
\cite{FrazierJawerth1985,FrazierJawerth}. In the Dunkl setting,
\cite[Theorems~3.15 and~3.16]{BuiDunkl} obtained $L$-atomic
decompositions for arbitrary $s\in\R$, following the operator approach of
\cite{BuiBuiDuongFMS}. The operator-independent atoms in
\cite[Definition~3.23]{BuiDunkl} are already indexed by dyadic cubes
$Q$ associated with the Euclidean metric, but satisfy
$\supp a\subset\mathcal O(6Q)$ and only first-order regularity and
mean-zero conditions. The corresponding decompositions in
\cite[Theorems~3.24 and~3.25]{BuiDunkl} were obtained for
$-1<s<1$, with $p(s,1)<p\leq\infty$, $0<q\leq\infty$
in the Besov case, and $p(s,1)<p<\infty$,
$p(s,1)<q\leq\infty$ in the Triebel--Lizorkin case, subject to
the auxiliary conditions in those theorems, where
\[
 p(s,1):=\max\Big\{\frac{\Qdim}{\Qdim+1},
                  \frac{\Qdim}{\Qdim+1+s}\Big\}.
\]
Theorem~\ref{thm:atomic-characterization} extends the smoothness and
integrability ranges and yields atoms supported in a single dilated
Euclidean cube $3Q$, with ordinary derivatives and polynomial moments
of prescribed order.

The atomic classes are formed from expansions
\[
 f=\sum_{\nu\in\mathbb Z}\sum_{Q\in\Dcal_\nu}s_Qa_Q
 \quad\text{in }\Scirc'.
\]
For a scalar family $s=\{s_Q\}_{Q\in\Dcal}$ we denote the Besov atomic
coefficient quasi-norm by
\begin{equation}\label{eq:intro-atomic-b-sequence}
 \|s\|_{\dot b^{s,\mathrm{at}}_{p,q}}
 :=\Big\{\sum_{\nu\in\mathbb Z}2^{\nu sq}
 \Big(\sum_{Q\in\Dcal_\nu}|s_Q|^p\Big)^{q/p}\Big\}^{1/q},
\end{equation}
and the Triebel--Lizorkin atomic coefficient quasi-norm by
\begin{equation}\label{eq:intro-atomic-f-sequence}
 \|s\|_{\dot f^{s,\mathrm{at}}_{p,q}}
 :=\Big\|\Big\{\sum_{\nu\in\mathbb Z}
 \Big[2^{\nu s}\sum_{Q\in\Dcal_\nu}|s_Q|w(Q)^{-1/p}\1_Q\Big]^q
 \Big\}^{1/q}\Big\|_{L^p(dw)}.
\end{equation}
The usual modifications are made when an exponent is infinite. These are
coefficient quasi-norms attached to a given atomic representation; we do not
define the Besov or Triebel--Lizorkin function-space quasi-norms by taking an
infimum over all atomic representations. Instead, the atomic characterization
below is formulated by two quantitative statements: every admissible atomic
representation controls the transform norm, and every function in the
transform space admits at least one atomic representation whose coefficient
quasi-norm is controlled by the transform norm. The same notation is recalled
in Section~4, where the differential atomic decomposition is proved.

\begin{theorem}[Differential atomic characterization]\label{thm:atomic-characterization}
Let $s\in\R$, $M\in\Nn$ and $L_0\in\mathbb N_0$.
\begin{enumerate}[label=\textup{(\roman*)}]
\item Let $0<p,q\le\infty$. If
\[
 2M>s,
 \qquad
 L_0+1+\Qdim+s>\frac{\Qdim}{p\wedge1},
\]
then the class of all $f\in\Scirc'$ admitting a representation
\[
 f=\sum_{\nu\in\Z}\sum_{Q\in\Dcal_\nu}s_Qa_Q
 \quad\text{in }\Scirc',
 \qquad
 s\in\dot b^{s,\mathrm{at}}_{p,q},
\]
with $(p;M,L_0)$-Dunkl differential atoms $a_Q$, is exactly
$\dot B^s_{p,q}(dw)$. More precisely, every such representation satisfies
\[
 \|f\|_{\dot B^s_{p,q}(dw)}
 \lesssim \|s\|_{\dot b^{s,\mathrm{at}}_{p,q}},
\]
and conversely every $f\in\dot B^s_{p,q}(dw)$ has at least one such representation for which
\[
 \|s\|_{\dot b^{s,\mathrm{at}}_{p,q}}
 \lesssim \|f\|_{\dot B^s_{p,q}(dw)}.
\]
\item Let $0<p<\infty$ and $0<q\le\infty$. If
\[
 2M>s,
 \qquad
 L_0+1+\Qdim+s>\frac{\Qdim}{p\wedge q\wedge1},
\]
then the corresponding atomic class is exactly $\dot F^s_{p,q}(dw)$. Every admissible atomic representation satisfies
\[
 \|f\|_{\dot F^s_{p,q}(dw)}
 \lesssim \|s\|_{\dot f^{s,\mathrm{at}}_{p,q}},
\]
and conversely every $f\in\dot F^s_{p,q}(dw)$ admits one for which
\[
 \|s\|_{\dot f^{s,\mathrm{at}}_{p,q}}
 \lesssim \|f\|_{\dot F^s_{p,q}(dw)}.
\]
\end{enumerate}
\end{theorem}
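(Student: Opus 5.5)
The statement has two directions, and I would handle them separately. The \emph{decomposition direction}, that every $f$ in the space has an atomic representation, should follow from the frame. The \emph{synthesis direction}, that any atomic sum lies in the space, needs almost-orthogonality estimates between $P_j$ and Euclidean atoms.

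\emph{Decomposition.} I would start from Theorem~\ref{thm:frame-spaces}, which gives $f=\sum_j\sum_{Q\in\Dcal_{j+J}}\langle f,\psi_Q\rangle\psi_Q$ in $\Scirc'$ with coefficient norms controlled by $\|f\|$. The frame elements $\psi_Q$ are not compactly supported, so they are not atoms themselves. Instead, I would use the classical Frazier--Jawerth device. Choose a compactly supported smooth kernel whose Dunkl transform supplies a Calder\'on reproducing formula. Concretely, pick $\theta\in C_c^\infty(B(0,1))$, radial, with $\int x^\gamma\theta\,dw=0$ for $|\gamma|\le L_0$. Consider the family of functions $\theta_t=\FInv[\F\theta(t\cdot)]$. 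Since $\F\theta$ is radial, these are Dunkl dilates, and by the support property of Dunkl convolution (Dunkl translates of compactly supported radial functions are supported in orbit neighbourhoods) they are localized. The orbit issue appears exactly here. Dunkl translation $\tau_y\theta_t$ is supported in $\bigcup_{\sigma}B(\sigma y,t)$, which is not a single cube.

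To avoid this, I would rather work directly in physical space. I would write $f=\sum_j P_j^2 f$ and then discretize with a smooth partition of unity $\{\chi_Q\}_{Q\in\Dcal_j}$ subordinate to $3Q$. The pieces $\chi_Q P_j^2f$ are supported in $3Q$ but lack moments. I would restore moments by subtracting, for each cube, a projection onto polynomials of degree $\le L_0$ weighted by $dw$ on $3Q$. The removed parts are then telescoped across scales, in the manner of the atomic decompositions of Han--Sawyer type. This leaves the coefficients $s_Q\simeq w(Q)^{1/p}2^{-js}\sup_{3Q}|P_j^{(*)}f|$ (Peetre maximal functions). Their $\dot b$/$\dot f$ norms are controlled by the Peetre maximal inequality, which holds in the Dunkl setting because $w$ is doubling and the kernels of $\Phi(2^{-j}\sqrt L)$ decay in $d(x,y)$. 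The derivative bounds $|\partial^\beta a|\lesssim\ell(Q)^{-|\beta|}w(Q)^{-1/p}$ need Euclidean derivative estimates on $P_jf$ at scale $2^{-j}$. These follow from $\partial_x$ estimates on the kernels $K_j(x,y)$, available from \cite{AnkerDziubanskiHejna}-type bounds. I expect the telescoping moment correction to be the delicate part of this direction. A cheaper alternative is to use high-order ``$L^M$'' representation, $P_j^2 f=L^{M'}(2^{-2jM'}\tilde P_j f)$. Such pieces are automatically orthogonal to polynomials of degree $<2M'$ in the Dunkl sense, since $\int L^{M'}g\cdot x^\gamma dw=\int g\,L^{M'}x^\gamma dw=0$ and $L$ maps polynomials to polynomials of lower degree. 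The localization could then be obtained via $\chi_Q$ before applying $L^{M'}$, noting that $L$ is nonlocal only through reflections. This is the step I would try first.

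\emph{Synthesis.} For an atom $a$ on $Q\in\Dcal_\nu$, I would prove the almost-orthogonality bound
\[
|P_ja(x)|\lesssim w(Q)^{-1/p}\,2^{-|j-\nu|\delta_{j,\nu}}\Big(1+2^{\min(j,\nu)}d(x,x_Q)\Big)^{-K}\frac{w(Q)}{w(B(x_Q,2^{-\min(j,\nu)}))}.
\]
Here the gain is $2^{-(j-\nu)2M}$ for $j\ge\nu$ and $2^{-(\nu-j)(L_0+1+\Qdim)}$-type for $j<\nu$. For $j\ge\nu$, write $P_j=2^{-2jM}L^M\tilde P_j$ and move $L^M$ onto $a$. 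The Dunkl operators involve difference quotients, but these are bounded by Euclidean derivatives via $\frac{f(x)-f(\sigma_\nu x)}{\langle\nu,x\rangle}=$ an integral of $\partial f$ along the segment, so $|L^Ma|\lesssim\ell(Q)^{-2M}w(Q)^{-1/p}$. This function is supported in $\mathcal O(3Q)$, which is harmless for $d$-decay. For $j<\nu$, I would Taylor-expand $K_j(x,\cdot)$ to order $L_0$ around $x_Q$ in Euclidean coordinates and use the moments. The remainder gives $2^{(j-\nu)(L_0+1)}$ times the measure ratio, and the measure ratio contributes at most $2^{(j-\nu)\Qdim}$ up to the lower-dimension factor.

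Summation then proceeds as in Frazier--Jawerth. One uses $r$-convexity with $r<p\wedge1$ (resp.\ $p\wedge q\wedge 1$), the estimate $\sum_Q|s_Q|\,(\cdots)\lesssim\big(M(\sum|s_Q|^r\1_Q)\big)^{1/r}$, and Fefferman--Stein on $(\R^N,dw)$. The conditions $2M>s$ and $L_0+1+\Qdim+s>\Qdim/(p\wedge q\wedge1)$ are exactly what make the geometric series converge. Convergence in $\Scirc'$ follows by pairing with $\varphi\in\Scirc$ and using the same estimates. The main obstacle is the $j<\nu$ estimate, specifically the correct power of the measure ratio. Because $w$ has nonuniform local dimension between $N$ and $\Qdim$, I would bound $w(Q)/w(B(x_Q,2^{-j}))\lesssim 2^{(j-\nu)N}$ and compensate using the lower-dimension bookkeeping in the $r$-maximal step.
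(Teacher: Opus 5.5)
There is a genuine gap in the decomposition direction: neither of your routes yields atoms supported in a single cube $3Q$ with vanishing moments, which is what Definition~\ref{def:dunkl-atom} requires. Route (B), $L^{M'}(\chi_Q g)$, does have moments, but it is supported in $\mathcal O(3Q)$. That set consists of up to $|G|$ copies of $3Q$, which can be arbitrarily far apart relative to $\ell(Q)$. The remark that ``$L$ is nonlocal only through reflections'' is exactly the orbit obstruction you spotted in your first attempt, not a harmless feature. These functions are essentially the paper's smooth operator atoms (Definition~\ref{def:smooth-L-atom}). Your construction is a reasonable shortcut to Lemma~\ref{lem:smooth-L-atomic-decomp}, avoiding finite propagation speed, but it stops where the real work begins. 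Cutting such a function into its pieces near the separate copies destroys the moments. The only available control of the defect is $\int A\chi P\,dw=\int b\,L^K(\chi P)\,dw$ with $\|L^K(\chi P)\|_\infty\lesssim \ell_\chi^{-2K}\|P\|$, where $\ell_\chi$ is the smoothness scale of the cutoff $\chi$. This gives a gain only when $\chi$ is smooth at the separation scale of the copies, not at scale $\ell(Q)$. Route (A) has the same defect. The moment of a localized piece of $P_j^2f$ equals $2^{-2jK}\int\tilde P_jf\,L^K(\chi P)\,dw$, which gains $(2^{-j}/\ell_\chi)^{2K}$ only for cutoffs smooth at a coarse scale $\ell_\chi$. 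Exact telescoping over dyadic generations, however, uses nested sums of fine cutoffs, which are smooth only at scale $2^{-j}$ and give no gain. Moreover, because of the reflection terms, $L^K(\chi P)$ is in general not confined to the transition layer of $\chi$, so even the Euclidean boundary-layer gain is unavailable.

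The missing idea is Proposition~\ref{prop:Latom-to-atoms}. For each orbit-supported $A_Q=L^Kb_Q$, group the finite orbit $\mathcal O(x_Q)$ into clusters at scales $D_\ell=A_*2^\ell\ell(Q)$. Use cutoffs $\chi_{\ell,C}$, smooth at scale $D_\ell$, that partition $\supp A_Q$ exactly at every level. The identity $\int A_Q\chi_{\ell,C}P\,dw=\int b_QL^K(P\chi_{\ell,C})\,dw$ then gives the moments a factor $(\ell(Q)/D_\ell)^{2K}$. These moments are matched by the correctors of Lemma~\ref{lem:smooth-moment-correction}, and the correctors are telescoped up the cluster tree until a single cluster remains, where all moments of $A_Q$ vanish. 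This produces single-cube differential atoms at scale $\simeq2^\ell\ell(Q)$ with coefficients $\lesssim2^{-\ell E}$. Lemma~\ref{lem:L-atomic-transfer} then re-sums them: at most $C_G2^{\ell N}$ input cubes feed one output cube, and this is absorbed by taking $K$, hence $E$, large.

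Separately, your synthesis half otherwise matches Lemma~\ref{lem:dunkl-atom-two-scale}, but one step needs correcting. Do not replace $w(Q)/w(B(x_Q,2^{-j}))$ by $2^{-(\nu-j)N}$. Keep the ratio and apply \eqref{eq:two-scale-discrete} with $\lambda=1$, so that it combines with the loss $(w(B)/w(Q))^{1/r_0}$ of the $r_0$-maximal step to give $2^{\Qdim(\nu-j)(1/r_0-1)}$. Discarding the ratio first only yields the requirement $L_0+1+N+s>\Qdim/r_0$, which is strictly stronger than the stated hypothesis when $\kappa\neq0$.
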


The proof follows the Calder\'on reproducing-formula method of
\cite[Theorems~4.2 and~4.6]{BuiBuiDuongFMS}, together with a transfer from
orbit-supported operator atoms to differential atoms on Euclidean cubes.
We do not claim that the sufficient smoothness and moment conditions in
Theorem~\ref{thm:atomic-characterization} are optimal.

Finally, we give fractional chain rules as an application. Section~2 collects the preliminary estimates. Section~3 proves the self-dual frame and the wavelet characterization. Section~4 treats atomic decomposition, while Section~5 proves the chain rules and a nonlinear approximation estimate. Inhomogeneous variants are recorded in the appendix.

The usual modifications are understood whenever an exponent is infinite.

\section{Preliminaries}
Let $R\subset\R^N\setminus\{0\}$ be a normalized reduced root system and let $G$ be the finite reflection group generated by the reflections $\sigma_\nu$, $\nu\in R$. Fix a positive subsystem $R_+\subset R$. We fix a nonnegative $G$-invariant multiplicity function $\kappa:R\to[0,\infty)$. The associated measure is
\[
 dw(x)=\prod_{\nu\in R}|\ip{x}{\nu}|^{\kappa(\nu)}\,dx.
\]
For measurable $E\subset\R^N$ we continue to write $w(E)=\int_Edw$. We write
\[
 \Qdim:=N+\sum_{\nu\in R}\kappa(\nu)
\]
for the homogeneous dimension. For Euclidean balls $B(x,r)$ one has
\begin{equation}\label{eq:volume}
 w(B(x,r))\simeq r^N\prod_{\nu\in R}(|\ip{x}{\nu}|+r)^{\kappa(\nu)},
\end{equation}
and hence $dw$ is doubling. In particular,
\[\Big(\frac Rr\Big)^N
 \lesssim \frac{w(B(x,R))}{w(B(x,r))}
 \lesssim \Big(\frac Rr\Big)^{\Qdim},
 \qquad 0<r\leq R.\]
We also use the orbit notation
\[
 \mathcal O(E):=\bigcup_{\sigma\in G}\sigma(E),
 \qquad E\subset\R^N,
\]
and the orbit distance
\[
 d(x,y):=\min_{\sigma\in G}|x-\sigma(y)|.
\]
Thus $B^d(x,r)=\{y:d(x,y)<r\}=\mathcal O(B(x,r))$, and
$w(B^d(x,r))\simeq w(B(x,r))$, with constants depending only on $G$.

Let $T_1,\dots,T_N$ be the Dunkl differential--difference operators associated
with $(R,\kappa)$. For a multi-index $\beta\in\Nn_0^N$ we write
\[
 T^\beta:=T_1^{\beta_1}\cdots T_N^{\beta_N}.
\]
The operators $T_j$ commute and preserve polynomials, lowering ordinary
polynomial degree by one. Moreover,
\[\F(T^\beta f)(\xi)=(i\xi)^\beta\F f(\xi),
 \qquad f\in\Sscr(\R^N).\]
A simple consequence of the differential--difference formula is the support
property
\begin{equation}\label{eq:dunkl-derivative-support}
 \supp T^\beta f\subset\mathcal O(\supp f),
 \qquad |\beta|>0,
\end{equation}
for every compactly supported smooth $f$. Notice that
\eqref{eq:dunkl-derivative-support} enlarges the support only by the finite
reflection group; it does not alter the Euclidean dyadic geometry used below.

Let $E$ be the Dunkl kernel and $c_\kappa>0$ the usual normalization constant. For $f\in L^1(dw)$ the Dunkl transform is
\[
 \F f(\xi)=c_\kappa^{-1}\int_{\R^N}E(-i\xi,x)f(x)\,dw(x).
\]
It extends to a unitary operator on $L^2(dw)$ and to an automorphism of the Schwartz class $\Sscr(\R^N)$; see \cite{deJeu,RoslerLecture}. The inverse is
\[
 \FInv g(x)=c_\kappa^{-1}\int_{\R^N}E(i\xi,x)g(\xi)\,dw(\xi).
\]
The Dunkl translation $\tau_x$ and convolution $*_{\kappa}$ are normalized so that
\[
 \F(f*_{\kappa}g)=\F f\,\F g.
\]
For suitable functions one also has
\[
 \F(fg)=c_\kappa^{-1}(\F f)*_{\kappa}(\F g).
\]
The precise value of the harmless normalization constant will never be relevant.

A basic support fact will be used repeatedly. For $x,y\in\R^N$ there is a compactly supported distribution $\gamma_{x,y}$ such that
\[
 \tau_xh(y)=\ip{h}{\gamma_{x,y}},
\]
and
\begin{equation}
	\supp\gamma_{x,y}\subset
 \{z:\,||x|-|y||\leq |z|\leq |x|+|y|\};
 \end{equation}
see \cite{AmriAnkerSifi,Trimeche}. This is the mechanism behind the product spectral property below.

For $\lambda>0$ let
\[
 \Sigma_\lambda
 :=\{f\in L^2(dw):\supp\F f\subset\overline{B(0,\lambda)}\}.
\]
When an additional $L^p$ condition is relevant we write $\Sigma_\lambda^p=\Sigma_\lambda\cap L^p(dw)$.

The following support inclusion extends the one-dimensional formula in
\cite[(2.14)]{AmriGasmiSifi} to arbitrary dimension.

\begin{lemma}[Support of Dunkl convolution]\label{lem:convolution-support}
Let $0<a<b<c$ and $f,g\in C_c^\infty(\R^N)$ satisfy
$\supp f\subset\overline{B(0,a)}$ and
$\supp g\subset\{x:b\le |x|\le c\}$. Then
\begin{equation}\label{eq:amri-convolution}
 \supp(f*_{\kappa} g)\subset\{x:b-a\le |x|\le c+a\}.
\end{equation}
\end{lemma}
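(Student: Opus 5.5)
We write
\[
 (f*_\kappa g)(x)=\int_{\R^N} f(y)\,\tau_x g(-y)\,dw(y)
\]
and use the support fact for the translation kernel. By the normalization $\F(f*_\kappa g)=\F f\,\F g$, the convolution can be realized as $(f*_\kappa g)(x)=\int f(y)\,\tau_x g(-y)\,dw(y)$; equivalently $\tau_x g(-y)=\tau_{-y}g(x)$ by the symmetry of Dunkl translation on Schwartz functions. Using the representation $\tau_x h(y)=\langle h,\gamma_{x,y}\rangle$ with $\supp\gamma_{x,y}\subset\{z:||x|-|y||\le|z|\le|x|+|y|\}$, we get the following. If $\tau_xg(-y)\neq0$ for some $y$ with $|y|\le a$, then the annulus $\{||x|-|y||\le|z|\le|x|+|y|\}$ must meet $\supp g\subset\{b\le|z|\le c\}$.

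The next step is to convert this into a condition on $|x|$, using $|y|\le a$. Meeting requires $|x|+|y|\ge b$ and $||x|-|y||\le c$. The first gives $|x|\ge b-|y|\ge b-a$. The second gives $|x|\le c+|y|\le c+a$. Hence, for $|x|<b-a$ or $|x|>c+a$, the integrand vanishes for every $y\in\supp f$, so $f*_\kappa g(x)=0$ there. Taking closures yields \eqref{eq:amri-convolution}.

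Some care is needed in applying the distributional pairing. The identity $\tau_xh(y)=\langle h,\gamma_{x,y}\rangle$ holds for smooth $h$, and $g\in C_c^\infty$. When $\supp g\cap\supp\gamma_{x,y}=\emptyset$ we get $\langle g,\gamma_{x,y}\rangle=0$. Since $\gamma_{x,y}$ is a distribution rather than a measure, we use the standard fact that a distribution pairs to zero with a test function whose support is disjoint from its support. This is where the closed annulus matters: if $|x|<b-a$, then $|x|+|y|<b$, so the supports are strictly disjoint and there is no boundary issue. The same holds for $|x|>c+a$.

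The main obstacle is justifying the convolution formula in the precise normalization of the paper, namely that $f*_\kappa g(x)=\int f(y)\tau_xg(-y)\,dw(y)$ and that the translation evaluated at $-y$ is governed by $\gamma_{x,-y}$, whose support depends only on $|x|$ and $|{-y}|=|y|$. Because the support statement in the cited literature \cite{AmriAnkerSifi,Trimeche} involves only norms, sign and reflection conventions are harmless. I would verify the formula on the Fourier side: $\F(\tau_xg)(\xi)=E(i\xi,x)\F g(\xi)$, and then Plancherel gives the integral representation. Joint smoothness in $(x,y)$ of $\tau_xg(-y)$ for Schwartz $g$ justifies pointwise evaluation.
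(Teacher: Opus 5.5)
Your proof is correct and follows essentially the same route as the paper. Both write $(f*_\kappa g)(x)=\int f(y)\,\tau_x g(-y)\,dw(y)$ and use the annular support of $\gamma_{x,-y}$, which for $|y|\le a$ and $|x|<b-a$ or $|x|>c+a$ is disjoint from $\supp g$; your remarks on the distributional pairing and the strict inequalities only spell out steps the paper leaves implicit.
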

\begin{proof}
For $\xi\in\R^N$ and $|\eta|\le a$, the translation support theorem gives
\[
 \supp\gamma_{\xi,-\eta}
 \subset\{z:||\xi|-|\eta||\le |z|\le |\xi|+|\eta|\}.
\]
If $|\xi|<b-a$ or $|\xi|>c+a$, this set is disjoint from
$\supp g$. Hence $\tau_\xi g(-\eta)=0$ for $\eta\in\supp f$,
and the convolution formula gives $(f*_{\kappa} g)(\xi)=0$.

This completes our proof.
\end{proof}

\begin{proposition}[Product of band-limited functions]\label{prop:product}
Let $\lambda_1,\lambda_2>0$. If $f\in\Sigma_{\lambda_1}$ and
$g\in\Sigma_{\lambda_2}$, then $fg\in L^1(dw)\cap L^2(dw)$ and
\[
 \supp\F(fg)\subset\overline{B(0,\lambda_1+\lambda_2)}.
\]
In particular, $\Sigma_\lambda\cdot\Sigma_\lambda\subset\Sigma_{2\lambda}^1$.
\end{proposition}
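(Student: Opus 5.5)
The plan is to pass to the Fourier side, use the product formula $\F(fg)=c_\kappa^{-1}(\F f)*_\kappa(\F g)$, and then apply the support lemma for Dunkl convolution.

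\emph{Integrability.} Since $f\in L^2(dw)$ and $g\in L^2(dw)$, the Cauchy--Schwarz inequality gives $fg\in L^1(dw)$. Next, band-limitation gives boundedness. Indeed, $\F g\in L^2(dw)$ has compact support, so $\F g\in L^1(dw)$. The inversion formula together with $|E(i\xi,x)|\le1$ then shows that $g$ is bounded and continuous, with $\|g\|_\infty\lesssim\|\F g\|_{L^1(dw)}\lesssim w(B(0,\lambda_2))^{1/2}\|g\|_2$. Hence $fg\in L^2(dw)$, since $\|fg\|_2\le\|f\|_2\|g\|_\infty$.

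\emph{Support.} First I treat the smooth case. Assume $\F f,\F g\in C_c^\infty$, with $\supp\F f\subset\overline{B(0,a)}$ for $a$ slightly larger than $\lambda_1$, and similarly $\supp\F g\subset\overline{B(0,c)}$. Lemma~\ref{lem:convolution-support} requires $g$ to be supported in an annulus, so I cover $\overline{B(0,c)}$ by a ball and annuli. Choose a smooth partition of unity $1=\chi_0+\sum_k\chi_k$ on $\overline{B(0,c)}$. Here $\chi_0$ is supported in a small ball $\overline{B(0,\varepsilon)}$, and each $\chi_k$ is supported in an annulus $\{b_k\le|x|\le c_k\}$ with $b_k>0$ and $c_k\le c$. (A radial partition works, since the annulus condition concerns only $|x|$.) For the annular pieces, Lemma~\ref{lem:convolution-support} gives
\[
\supp\big(\F f*_\kappa(\chi_k\F g)\big)\subset\{|x|\le c_k+a\}\subset\overline{B(0,a+c)}.
\]
For the ball piece, I swap the roles of the two factors using commutativity of $*_\kappa$. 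I split $\F f$ radially into a ball piece and annular pieces, and again apply Lemma~\ref{lem:convolution-support}, now with $\chi_0\F g$ as the ball factor of radius $\varepsilon$. This gives support in $\overline{B(0,a+\varepsilon)}$. The remaining term is ball$*_\kappa$ball with both radii at most $\varepsilon$. For it I use the general translation support theorem directly: $\tau_\xi h(-\eta)$ vanishes when $|\xi|>|\eta|+\varepsilon$, so that term is supported in $\overline{B(0,2\varepsilon)}$. In fact this direct argument handles every piece at once. For $\eta\in\supp\F f$ and $|\xi|>a+c$, the set $\supp\gamma_{\xi,-\eta}$ lies in $\{|z|\ge|\xi|-|\eta|>c\}$, which misses $\supp\F g$. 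So I would use this cleaner route: the same argument as in the proof of Lemma~\ref{lem:convolution-support}, run with only the outer radius bound. Letting $a\downarrow\lambda_1$ and $c\downarrow\lambda_2$ gives $\supp\F(fg)\subset\overline{B(0,\lambda_1+\lambda_2)}$.

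\emph{Approximation (main obstacle).} General $\F f\in L^2$ is not smooth, so I regularize. Let $\rho_\delta$ be a radial approximate identity on the Fourier side, with $\F\rho_\delta\in C_c^\infty(B(0,\delta))$ and mass one. Then $\F f_\delta:=\rho_\delta*_\kappa\F f$ is smooth, and by the support computation above it is supported in $\overline{B(0,\lambda_1+\delta)}$. Moreover $f_\delta=(\F^{-1}\rho_\delta)\cdot f$ up to normalization, so $f_\delta\to f$ in $L^2$ since $\F^{-1}\rho_\delta\to$ const boundedly. Products of such regularizations converge to $fg$ in $L^1(dw)$, so $\F(f_\delta g_\delta)\to\F(fg)$ uniformly. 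Since uniform limits preserve vanishing outside $\overline{B(0,\lambda_1+\lambda_2+2\delta)}$, letting $\delta\to0$ yields the claim. The delicate points are checking that the convolution formula holds for these regularized functions and that the identification of $f_\delta$ holds. If this proves awkward, I would instead verify the support claim by duality. For $\varphi\in\Sscr$ with $\supp\F\varphi\cap\overline{B(0,\lambda_1+\lambda_2)}=\emptyset$, I would show $\int fg\,\overline\varphi\,dw=\langle f,\bar g\varphi\rangle=0$, using that $\bar g\varphi$ has spectrum away from $\overline{B(0,\lambda_1)}$ by the smooth case. Finally, $\Sigma_\lambda\cdot\Sigma_\lambda\subset\Sigma^1_{2\lambda}$ is the special case $\lambda_1=\lambda_2=\lambda$.
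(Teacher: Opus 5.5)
Your proposal is correct and is essentially the paper's proof. It uses the same H\"older and sup-norm argument for $fg\in L^1(dw)\cap L^2(dw)$, and the same direct translation-support step: for $|\xi|>\lambda_1+\lambda_2$ and $|\eta|\le\lambda_1$, the set $\supp\gamma_{\xi,-\eta}$ misses $\supp\F g$. Your frequency-side regularization $\rho_\delta*_\kappa\F f=\F\big(c_\kappa(\FInv\rho_\delta)f\big)$, combined with the $L^1\to L^\infty$ continuity of $\F$, is a valid way of filling in the paper's ``first justified for smooth compactly supported transforms and then by approximation''; there you mean $\rho_\delta\in C_c^\infty(B(0,\delta))$, not $\F\rho_\delta$, and approximating $\F f$ directly in $L^2(dw)$ by elements of $C_c^\infty(B(0,\lambda_1+\delta))$ would be slightly simpler.
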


\begin{proof}
Set $a=\F f$ and $b=\F g$. By H\"older's inequality,
$fg\in L^1(dw)$.  Moreover, inversion, $|E(i\xi,x)|\le1$,
and Cauchy--Schwarz on the finite-measure frequency ball give
$\|f\|_\infty\lesssim_{\lambda_1}\|f\|_2$, so
$fg\in L^2(dw)$.  The product formula reads
$\F(fg)=c_\kappa^{-1}(a*_{\kappa} b)$. If $|\xi|>\lambda_1+\lambda_2$
and $|\eta|\leq\lambda_1$, then
\[
 \supp\gamma_{\xi,-\eta}
 \subset\{z:\ |z|\geq|\xi|-|\eta|>\lambda_2\}.
\]
Hence $\tau_\xi b(-\eta)=0$, and consequently
$(a*_{\kappa} b)(\xi)=0$. The identities are first justified for smooth
compactly supported transforms and then by approximation.

This completes our proof.
\end{proof}

From now on, for a bounded radial function $m(\xi)=m_0(|\xi|)$, we denote by $T_m$ and $K_m(\cdot,\cdot)$ the Dunkl multiplier associated with $m$ and its kernels which are defined by  
\[
 T_mf:=\FInv(m\F f).
\]
and
\[
 K_m(x,y)=\tau_x(\FInv m)(-y)=\tau_{-y}(\FInv m)(x),
\]
respectively.

For the sampling arguments we need only the standard localization and regularity estimates for smooth radial multipliers. We record them for the later use.

\begin{proposition}\label{prop:kernel}
Let $\vartheta:[0,\infty)\to\mathbb C$ be such that its even extension
$\widetilde\vartheta(u)=\vartheta(|u|)$ belongs to $\mathscr S(\mathbb R)$, and put
\[
 \vartheta_t(\xi)=\vartheta(t|\xi|),\qquad t>0.
\]
In particular, this hypothesis holds for smooth annular cutoffs and for
restrictions of even Schwartz functions; smoothness merely on the half-line
at the origin is not sufficient.
Let $K_{\vartheta_t}$ be the kernel of $T_{\vartheta_t}$. For every $M>0$ there is $C_M$ such that
\begin{equation}\label{eq:kernel-size}
 |K_{\vartheta_t}(x,y)|
 \leq \frac{C_M}{w(B(x,t+d(x,y)))}
 \Big(\frac{t}{t+d(x,y)}\Big)^M
 \Big(1+\frac{|x-y|}{t}\Big)^{-2}.
\end{equation}
More generally, for every pair of multi-indices $\alpha,\beta$ and every
$M>0$,
\begin{equation}\label{eq:kernel-euclidean-derivatives}
 |\partial_x^\alpha\partial_y^\beta K_{\vartheta_t}(x,y)|
 \lesssim_{\alpha,\beta,M}
 t^{-|\alpha|-|\beta|}
 \frac1{w(B(x,t+d(x,y)))}
 \Big(\frac{t}{t+d(x,y)}\Big)^M
 \Big(1+\frac{|x-y|}{t}\Big)^{-2}.
\end{equation}
Moreover, if $|x-x'|\leq t$, then
\begin{equation}\label{eq:kernel-reg}
 |K_{\vartheta_t}(x,y)-K_{\vartheta_t}(x',y)|
 \leq C_M\frac{|x-x'|}{t}
 \big[\mathcal K_{M,t}(x,y)+\mathcal K_{M,t}(x',y)\big],
\end{equation}
where $\mathcal K_{M,t}$ denotes the right-hand side of \eqref{eq:kernel-size} without the constant $C_M$. The analogous estimate holds in the second variable. If $\vartheta$ vanishes at zero, then
\[\int_{\R^N}K_{\vartheta_t}(x,y)\,dw(y)
 =\int_{\R^N}K_{\vartheta_t}(y,x)\,dw(y)=0.\]
Finally, for every $1\leq p\leq\infty$ the operators $T_{\vartheta_t}$ are uniformly bounded on $L^p(dw)$.
\end{proposition}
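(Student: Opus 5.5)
The plan is to reduce everything to the known Dunkl heat/translation machinery for radial multipliers. First I write $K_{\vartheta_t}(x,y)=\tau_x(\FInv\vartheta_t)(-y)$ and note that $\FInv\vartheta_t(z)=t^{-\Qdim}\FInv\vartheta_1(z/t)$ by homogeneity of $dw$ and of the Dunkl kernel. This reduces the estimates to $t=1$ after rescaling; the volume factor $w(B(x,t+d))$ transforms correctly by \eqref{eq:volume}. Since $\widetilde\vartheta$ is even and Schwartz, $\vartheta_1(|\xi|)$ is a radial Schwartz function on $\R^N$, so $\FInv\vartheta_1$ is a radial Schwartz function. For radial functions the Dunkl translation is given by Rösler's positive formula $\tau_x h(-y)=\int h_0(\sqrt{|x|^2+|y|^2-2\langle y,\eta\rangle})\,d\mu_x(\eta)$, with $\mu_x$ a probability measure supported in the convex hull of $G\cdot x$.

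The size bound \eqref{eq:kernel-size} is then the known estimate of Dziubański--Hejna (see \cite{AnkerDziubanskiHejna}) for translates of radial Schwartz functions. The orbit-distance decay with arbitrary $M$ and the volume factor come from writing the radial profile as a superposition of heat kernels, $h_0(r)=\int_0^\infty e^{-r^2/(4s)}\,d\rho(s)$-type subordination, or more directly from dominating $|h_0|$ by a decreasing radial majorant and invoking their theorem. The extra Euclidean factor $(1+|x-y|/t)^{-2}$ is the refinement of Dziubański--Hejna (the "$\Lambda$" factor $1+|x-y|/t$). I would cite it rather than reprove it, since it follows from the same Rösler-formula analysis.

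For the derivative bounds \eqref{eq:kernel-euclidean-derivatives}, I differentiate under the Rösler integral in $x$ and $y$; this is legitimate in $y$ directly. In $x$, I use the symmetry $K(x,y)=\tau_{-y}(\FInv\vartheta_t)(x)$, so the $x$-derivative falls on a translate in the second slot, and the same formula applies with roles exchanged. Each Euclidean derivative of $h_0(\sqrt{A(x,y,\eta)})$ produces factors $t^{-1}$ times a new radial Schwartz profile (using $|\nabla_x\sqrt A|\le C$ and that derivatives of $h_0(r)$ are $r\,\cdot$(Schwartz) by evenness). Hence one gets finitely many terms of the same type as the size bound, and \eqref{eq:kernel-size} applies to each. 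Handling the mixed $x,y$ derivatives cleanly is \textbf{the main obstacle}. My first attempt there would be the Dziubański--Hejna version with Euclidean derivatives of $\mathcal T$-translated radial functions, which covers both variables.

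The regularity estimate \eqref{eq:kernel-reg} follows from the mean value theorem with $|\alpha|=1$ on the segment $[x,x']$. Along it, $w(B(z,t+d(z,y)))\simeq w(B(x,t+d(x,y)))$ and the decay factors are comparable, since $|z-x|\le t$ and the measure is doubling. The vanishing integral follows from $\int K(x,y)\,dw(y)=T_{\vartheta_t}1(x)=c\,\vartheta_t(0)=0$; this is justified by writing $\int\tau_x h(-y)\,dw(y)=\int h\,dw=c\,\F h(0)$, which is valid for Schwartz $h$. Finally, uniform $L^p$ boundedness follows from Schur's test: the size bound yields $\sup_x\int|K|\,dw(y)+\sup_y\int|K|\,dw(x)\lesssim1$ by summing over dyadic annuli in $d$ and using doubling, with orbits adding only a factor $|G|$.
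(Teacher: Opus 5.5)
Your plan is essentially the paper's. The proof there imports \eqref{eq:kernel-size}--\eqref{eq:kernel-reg} wholesale from \cite[Lemma~2.5]{BuiDunkl}, which rests on the Dziuba\'nski--Hejna bounds you invoke; that includes the mixed Euclidean derivatives you single out as the main obstacle. It then obtains the cancellation from $\vartheta(0)=0$ and the uniform $L^p$ bounds from the size estimate, exactly as you do.

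If you want the mixed derivatives self-contained rather than cited, write $\widetilde\vartheta=\vartheta^{(1)}\vartheta^{(2)}$ with both factors even Schwartz. For a compactly supported $\vartheta$, take $\vartheta^{(2)}$ an even cutoff equal to $1$ on its support; in general, even Schwartz functions factor this way. Then $K_{\vartheta_t}(x,y)=\int K_{\vartheta^{(2)}_t}(x,z)\,K_{\vartheta^{(1)}_t}(z,y)\,dw(z)$ puts $\partial_x^\alpha$ and $\partial_y^\beta$ on different factors, each handled by your one-variable R\"osler argument. The composition of two kernels of the form \eqref{eq:kernel-size} is again of that form: split according to whether $|x-z|$ or $|z-y|$ is at least $|x-y|/2$.
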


\begin{proof}
Apply \cite[Lemma~2.5]{BuiDunkl} to the even Schwartz function
$\widetilde\vartheta$. This gives
\eqref{eq:kernel-size}--\eqref{eq:kernel-reg}, including the
arbitrary-order ordinary derivative estimates
\eqref{eq:kernel-euclidean-derivatives}. The cancellation follows because the multiplier vanishes at $0$. The $L^p$ bounds follow directly from  \eqref{eq:kernel-size}.

This completes our proof.
\end{proof}

\bigskip

For $a>0$ and a function $h$ whose Dunkl transform is supported in $B(0,c/t)$, define the Peetre-type maximal function
\[
 h^*_{t,a}(x):=\sup_{y\in\R^N}\frac{|h(y)|}{(1+d(x,y)/t)^a}.
\]
We shall use two Hardy--Littlewood maximal operators. The Euclidean maximal operator is
\[
 \M g(x):=\sup_{r>0}\frac1{w(B(x,r))}
 \int_{B(x,r)}|g(y)|\,dw(y),
\]
and the orbit maximal operator is
\[
 \MO g(x):=\sup_{r>0}\frac1{w(B^d(x,r))}
 \int_{B^d(x,r)}|g(y)|\,dw(y).
\]
Since an orbit ball is a union of at most $|G|$ Euclidean balls and $dw$ is $G$-invariant, the usual scalar and vector-valued maximal inequalities hold for $\MO$ with the same exponent ranges as for $\M$.

We now record the following elementary estimates in \cite{BuiBuiDuongFMS}, which will be used frequently in the sequel: for any $H>\Qdim$, $r>0$, and $f\in L^1_{\rm loc}(dw)$,
\begin{equation}
	\label{eq-elementary estimate}
	\int_{\R^N} \frac{1}{w(B(x\wedge y,r))}\Big(1+\frac{d(x,y)}{r}\Big)^{-H} |f(y)|dw(y)\lesssim \mathcal M^{\mathcal O}f(x),
\end{equation}
 where $w(B(x\wedge y,r))=\min\{w(B(x,r)),w(B(y,r))\}$ and the implicit constant is independent of $y, r$ and $f$. Consequently, for any $H>\Qdim$ and $r>0$,
\begin{equation}
	\label{eq-elementary estimate 2}
	\int_{\R^N} \frac{1}{w(B(x\wedge y,r))}\Big(1+\frac{d(x,y)}{r}\Big)^{-H} dw(y)\lesssim 1.
\end{equation}

\begin{lemma}\label{lem:peetre}
	Let $0<r<\infty$ and $a>\Qdim/r$. There exists $C$ such that, whenever
	$\supp\F h\subset B(0,c/t)$,
	\begin{equation}\label{eq:peetre-max}
		h^*_{t,a}(x)\leq C\,[\MO(|h|^r)(x)]^{1/r}.
	\end{equation}
	Consequently, for $0<p<\infty$, $0<q\leq\infty$ and
	$0<r<\min\{p,q\}$ (with $r<p$ when $q=\infty$),
	\begin{equation}\label{eq:vector-peetre}
		\Big\|\Big(\sum_j|h^*_{2^{-j},a}|^q\Big)^{1/q}\Big\|_{L^p(dw)}
		\lesssim
		\Big\|\Big(\sum_j|h_j|^q\Big)^{1/q}\Big\|_{L^p(dw)}
	\end{equation}
	for every sequence $\{h_j\}$ with
	$\supp\F h_j\subset B(0,c2^j)$.
\end{lemma}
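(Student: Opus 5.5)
To prove \eqref{eq:peetre-max} I will follow the classical Peetre argument, adapted to the orbit distance. By dilation and the homogeneity of $dw$ I may take $t=1$; equivalently, I track $t$ throughout using the doubling properties in \eqref{eq:volume}.

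\emph{Reproducing identity.} Fix an even Schwartz function $\vartheta$ with $\vartheta(u)=1$ for $|u|\le c$. Then $h=T_{\vartheta_t}h$, since $\supp\F h\subset B(0,c/t)$. Because $\F(T^\beta h)=(i\xi)^\beta\F h$ remains supported in the same ball, the same identity holds for Euclidean derivatives. Applying \eqref{eq:kernel-euclidean-derivatives} to $h(z)=\int K_{\vartheta_t}(z,u)h(u)\,dw(u)$ gives a pointwise gradient bound
\[
|\nabla h(z)|\lesssim t^{-1}\int \frac{1}{w(B(z,t+d(z,u)))}\Big(\frac{t}{t+d(z,u)}\Big)^{M}|h(u)|\,dw(u).
\]
First I will treat the case where $h$ is a priori bounded, so that $h^*_{t,a}(x)<\infty$. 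This holds when $h\in L^2$ by the $\Sigma_\lambda$ estimate of Proposition~\ref{prop:product}. The general case then follows by the usual approximation $h_\epsilon=h\cdot(\text{band-limited bump})$, or by working with $\sup_y |h(y)|(1+d(x,y)/t)^{-a}(1+\epsilon|y|)^{-A}$ and letting $\epsilon\to0$.

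\emph{Local oscillation bound.} Fix $y$ and $\delta\in(0,1)$. For $z\in B(y,\delta t)$, the mean value theorem together with the gradient bound, and then $|h(u)|\le h^*_{t,a}(x)(1+d(x,u)/t)^a$ with $(1+d(x,u)/t)^a\lesssim(1+d(x,y)/t)^a(1+d(y,u)/t)^a$ (triangle inequality for $d$) and $M>a+\Qdim$ via \eqref{eq-elementary estimate 2}, yield
\[
|h(y)|\le |h(z)|+C\delta\,h^*_{t,a}(x)\Big(1+\frac{d(x,y)}{t}\Big)^{a}.
\]
For $r\le1$, I raise to the power $r$ and average over $z\in B(y,\delta t)$. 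Since $B(y,\delta t)\subset B^d(x,d(x,y)+t)$ and $w(B^d(x,d(x,y)+t))/w(B(y,\delta t))\lesssim\delta^{-\Qdim}(1+d(x,y)/t)^{\Qdim}$ by doubling and orbit comparability, this gives
\[
|h(y)|^r\lesssim\delta^{-\Qdim}\Big(1+\frac{d(x,y)}{t}\Big)^{\Qdim}\MO(|h|^r)(x)+\delta^{r}\big[h^*_{t,a}(x)\big]^r\Big(1+\frac{d(x,y)}{t}\Big)^{ar}.
\]
Next I divide by $(1+d(x,y)/t)^{ar}$, use $ar\ge\Qdim$, and take the supremum over $y$. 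Choosing $\delta$ small absorbs the second term into the left-hand side, which is legitimate because $h^*_{t,a}(x)<\infty$, and \eqref{eq:peetre-max} follows. When $r>1$, the bound follows from the case $r=1$ via Hölder, $\MO(|h|)\le[\MO(|h|^r)]^{1/r}$, provided $a>\Qdim$; otherwise I run the same argument with $r$ replaced by $\min(r,1)$.

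\emph{Vector-valued bound.} Since $a>\Qdim/r$, I pick $r<\min\{p,q\}$ with $ar>\Qdim$ still, which is possible by openness. Applying \eqref{eq:peetre-max} with $t=2^{-j}$ (note $\supp\F h_j\subset B(0,c/2^{-j})$) gives $h^*_{2^{-j},a}\lesssim[\MO(|h_j|^r)]^{1/r}$. Then \eqref{eq:vector-peetre} follows from the Fefferman–Stein inequality for $\MO$ on $L^{p/r}(\ell^{q/r})$, where $p/r>1$ and $q/r>1$; for $q=\infty$ the scalar maximal bound with $p/r>1$ suffices.

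The step I expect to be the main obstacle is the a priori finiteness of $h^*_{t,a}(x)$ needed for the absorption. I would handle it through the truncated maximal function described above, verifying that the gradient estimate is stable under the extra weight $(1+\epsilon|y|)^{-A}$.
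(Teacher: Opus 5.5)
Your proof is correct in its main line and takes a genuinely different route from the paper. The paper never differentiates $h$: from $h=T_{\eta_t}h$ it uses only the size bound of the kernel. For $r\ge1$ it bounds $|h(y)|^r$ by $(1+d(x,y)/t)^{\Qdim}\MO(|h|^r)(x)$ directly, using the kernel and H\"older, with no absorption. For $0<r<1$ it splits $|h(z)|=|h(z)|^r|h(z)|^{1-r}$, bounds $|h(z)|^{1-r}\le[h^*_{t,a}(x)]^{1-r}(1+d(x,z)/t)^{a(1-r)}$, and divides by $[h^*_{t,a}(x)]^{1-r}$.

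You instead run the classical Peetre mean-value argument:
\begin{itemize}
\item a gradient bound from \eqref{eq:kernel-euclidean-derivatives};
\item comparison of $h(y)$ with $h(z)$ for $z\in B(y,\delta t)$;
\item averaging over this Euclidean ball after enlarging it to the orbit ball $B^d(x,d(x,y)+t)$ (your volume ratio $\delta^{-\Qdim}(1+d(x,y)/t)^{\Qdim}$ is correct, by $G$-invariance and doubling);
\item absorption of the $\delta^r$ term.
\end{itemize}
This costs kernel derivative estimates, which Proposition~\ref{prop:kernel} provides, but it handles all $r$ by one mechanism.

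Both routes need $h^*_{t,a}(x)<\infty$ at the division or absorption step. The paper uses this tacitly; you address it, and your truncated maximal function works. Since $|\cdot|$ is $G$-invariant, $1+\epsilon|u|\le(1+\epsilon|\zeta|)(1+\epsilon\, d(\zeta,u))$. So for $\epsilon\le1/t$ the extra weight is absorbed by the orbit decay of the kernel, with constants uniform in $\epsilon$. The truncated function is finite for large $A$, because band-limited $h$ grows at most polynomially, and it increases to $h^*_{t,a}(x)$ as $\epsilon\downarrow0$.

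Two points need correcting.

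\emph{The case $r>1$.} Your treatment leaves a gap. The reduction to $r=1$ via H\"older requires $a\ge\Qdim$, and ``replacing $r$ by $\min(r,1)$'' is again the case $r=1$. So the range $\Qdim/r<a<\Qdim$ is not covered. The repair is immediate within your own argument: in the oscillation step use $(A+B)^r\le2^{r-1}(A^r+B^r)$ instead of subadditivity. Averaging, division by $(1+d(x,y)/t)^{ar}$ with $ar>\Qdim$, and absorption with $2^{r-1}C^r\delta^r<1/2$ then go through verbatim. Alternatively, use the paper's direct H\"older bound, which needs no absorption.

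\emph{The remark on Euclidean derivatives.} Your claim that the reproducing identity ``holds for Euclidean derivatives'' because $\F(T^\beta h)=(i\xi)^\beta\F h$ is a non sequitur, since $T^\beta$ are Dunkl operators, not Euclidean derivatives. It is also unnecessary. Your gradient bound comes from differentiating $z\mapsto K_{\vartheta_t}(z,u)$ under the integral, exactly as you then do.

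The vector-valued part is the same as the paper's.
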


\begin{proof}
	 Choose $\eta\in C_c^\infty([0,\infty))$ such that
	$\eta=1$ on $[0,c]$, and define the radial multiplier
	\[
	\eta_t(\xi):=\eta(t|\xi|),\qquad \xi\in\R^N.
	\]
	Then $h=T_{\eta_t}h$. Hence, by Proposition~\ref{prop:kernel}, for every
	sufficiently large $M$,
	\begin{equation}\label{eq:peetre-kernel}
		\begin{aligned}
		|h(y)|
		&\lesssim
		\int_{\R^N}
		\frac{1}{w(B(y,t))}
		\Big(1+\frac{d(y,z)}{t}\Big)^{-M}
		|h(z)|\,dw(z).
	\end{aligned}
	\end{equation}

	Assume first that $r\geq1$. By \eqref{eq:peetre-kernel} and \eqref{eq-elementary estimate},
	\[
	|h(y)|^r
	\lesssim \MO h(x)^r\lesssim 
	\Big(1+\frac{d(x,y)}{t}\Big)^{\Qdim}
	\MO(|h|^r)(x).
	\]
	Therefore
	\[
	\frac{|h(y)|}{(1+d(x,y)/t)^a}
	\lesssim
	\Big(1+\frac{d(x,y)}{t}\Big)^{\Qdim/r-a}
	[\MO(|h|^r)(x)]^{1/r}.
	\]
	Since $a>\Qdim/r$, taking the supremum over $y$ gives
	\eqref{eq:peetre-max}.
	
For $r\in (0,1)$, we first note that
	\[
	|h(z)|^{1-r}
	\leq
	[h^*_{t,a}(x)]^{1-r}
	\Big(1+\frac{d(x,z)}{t}\Big)^{a(1-r)}.
	\]
	Using this in \eqref{eq:peetre-kernel}, together with
	\[
	1+\frac{d(x,z)}{t}
	\lesssim
	\Big(1+\frac{d(x,y)}{t}\Big)
	\Big(1+\frac{d(y,z)}{t}\Big),
	\]
	and then applying \eqref{eq-elementary estimate} with a sufficiently large
 $M$, we obtain
	\[
	|h(y)|
	\lesssim
	[h^*_{t,a}(x)]^{1-r}
	\Big(1+\frac{d(x,y)}{t}\Big)^{a(1-r)+\Qdim}
	\MO(|h|^r)(x).
	\]
	Consequently,
	\[
	\frac{|h(y)|}{(1+d(x,y)/t)^a}
	\lesssim
	[h^*_{t,a}(x)]^{1-r}
	\Big(1+\frac{d(x,y)}{t}\Big)^{\Qdim-ar}
	\MO(|h|^r)(x).
	\]
	Since $ar>\Qdim$, taking the supremum over $y$ yields
	\[
	[h^*_{t,a}(x)]^r\lesssim \MO(|h|^r)(x),
	\]
	which proves \eqref{eq:peetre-max}.
	
	Finally, if $0<r<\min\{p,q\}$, then $p/r>1$ and $q/r>1$. Hence
	\eqref{eq:vector-peetre} follows from \eqref{eq:peetre-max} and the
	Fefferman--Stein vector-valued maximal inequality for $\MO$.

This completes our proof.
\end{proof}
\medskip\noindent\textbf{Sampling and cubature.}\quad
Recall that $\Dcal_\ell$ is the Euclidean dyadic grid of sidelength
$2^{-\ell}$. For $Q\in\Dcal_\ell$ choose an arbitrary point $x_Q\in Q$;
no special choice of this point is required. By \eqref{eq:volume} and doubling,
\begin{equation}\label{eq:cube-ball}
 w(Q)\simeq w(B(x_Q,\ell(Q))).
\end{equation}
The constants are uniform in $Q$ and in the choice of $x_Q\in Q$.

\begin{remark*}
At every scale, the point $x_Q$ may be chosen arbitrarily in each cube $Q$. All constants in the sampling theorem below are uniform over the whole family of choices. In the positive cubature theorem, the weights $\omega_Q$ generally depend on the chosen sampling points, but the comparison $\omega_Q\simeq w(Q)$ is uniform. Consequently the self-dual frame can be constructed from any prescribed family $\{x_Q\in Q\}$.
\end{remark*}

\begin{lemma}\label{lem:oscillation}
Fix $c_0>0$. Let $a>0$ be sufficiently large. If
\[
 \supp\F h\subset B(0,c_0t^{-1})
\]
and $|x-x'|\leq t$, then
\begin{equation}\label{eq:oscillation-point}
 |h(x)-h(x')|
 \lesssim \frac{|x-x'|}{t}\big[h^*_{t,a}(x)+h^*_{t,a}(x')\big].
\end{equation}
\end{lemma}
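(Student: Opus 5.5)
We plan to reproduce $h$ with a smooth radial multiplier and then transfer the kernel regularity estimate \eqref{eq:kernel-reg} to $h$, as in the proof of Lemma~\ref{lem:peetre}. Choose $\eta\in C_c^\infty([0,\infty))$ with $\eta=1$ on $[0,c_0]$ whose even extension lies in $\mathscr S(\R)$; for instance $\eta$ is constant near $0$. Set $\eta_t(\xi)=\eta(t|\xi|)$. Since $\supp\F h\subset B(0,c_0t^{-1})$, we have $h=T_{\eta_t}h$, so
\[
 h(x)-h(x')=\int_{\R^N}\big[K_{\eta_t}(x,z)-K_{\eta_t}(x',z)\big]h(z)\,dw(z).
\]
Because $|x-x'|\le t$, \eqref{eq:kernel-reg} bounds the kernel difference by $C_M\frac{|x-x'|}{t}[\mathcal K_{M,t}(x,z)+\mathcal K_{M,t}(x',z)]$. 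By symmetry it therefore suffices to show
\[
 \int_{\R^N}\mathcal K_{M,t}(x,z)|h(z)|\,dw(z)\lesssim h^*_{t,a}(x)
\]
for $M$ large. For this we insert $|h(z)|\le h^*_{t,a}(x)(1+d(x,z)/t)^a$ into the integral.

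Next we absorb the growth factor. Note that $t/(t+d(x,z))=(1+d(x,z)/t)^{-1}$, so
\[
 \mathcal K_{M,t}(x,z)\le \frac{1}{w(B(x,t+d(x,z)))}\Big(1+\frac{d(x,z)}{t}\Big)^{-M}.
\]
We also have $w(B(x,t+d(x,z)))\ge w(B(x,t))$. Hence the integral is at most
\[
 h^*_{t,a}(x)\int_{\R^N}\frac{1}{w(B(x,t))}\Big(1+\frac{d(x,z)}{t}\Big)^{a-M}dw(z).
\]
Choosing $M>a+\Qdim$, this integral is $\lesssim1$. The reason is that $w(B^d(x,r))\simeq w(B(x,r))$ and the doubling estimate $w(B(x,2^kt))\lesssim2^{k\Qdim}w(B(x,t))$ let us sum over the orbit annuli $\{2^{k-1}t\le d(x,z)<2^kt\}$. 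This is essentially \eqref{eq-elementary estimate 2}, except that the normalizing ball is centred at $x$ rather than at $x\wedge z$; the annular sum handles this directly. The same bound holds with $x'$ in place of $x$, giving $h^*_{t,a}(x')$, and adding the two cases yields \eqref{eq:oscillation-point}.

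The only delicate point is the applicability of Proposition~\ref{prop:kernel}, which requires the even extension of $\eta$ to be Schwartz. This is guaranteed by taking $\eta$ constant near the origin. Beyond that, the phrase ``$a$ sufficiently large'' is not actually needed: any $a>0$ works once $M$ is taken large relative to $a$. The constant $c_0$ enters only through the choice of $\eta$.
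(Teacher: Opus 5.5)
Your proposal is correct and follows essentially the same route as the paper: you reproduce $h=T_{\eta_t}h$, apply the kernel regularity estimate \eqref{eq:kernel-reg}, insert $|h(z)|\le h^*_{t,a}(x)(1+d(x,z)/t)^a$, and absorb this growth with large orbit decay. The only cosmetic difference is that the paper bounds both kernel terms by $h^*_{t,a}(x)$ alone, using $d(x,x')\le |x-x'|\le t$, and then interchanges $x$ and $x'$, whereas you pair each kernel term with its own Peetre maximal function.
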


\begin{proof}
Choose $\eta\in C_c^\infty([0,\infty))$ with $\eta=1$ on $[0,c_0]$.
Recall that $\eta_t(\xi):=\eta(t|\xi|)$. Then $h=T_{\eta_t}h$, and hence
\[
 h(x)-h(x')
 =\int[K_{\eta_t}(x,y)-K_{\eta_t}(x',y)]h(y)\,dw(y).
\]
Using
$|h(y)|\leq h^*_{t,a}(x)(1+d(x,y)/t)^a$, taking the decay order in
Proposition~\ref{prop:kernel} larger than $a+2\Qdim+3$ and applying \eqref{eq-elementary estimate 2}, we obtain
\[
 |h(x)-h(x')|
 \lesssim\frac{|x-x'|}{t}h^*_{t,a}(x).
\]
Interchanging $x$ and $x'$ gives \eqref{eq:oscillation-point}.

This completes our proof.
\end{proof}

\begin{lemma}
\label{lem:euclidean-to-dunkl}
Let $K\in\Nn$ and $r>0$. Suppose $h\in C_c^K(\R^N)$ satisfies
\[
 \|\partial^\alpha h\|_\infty\leq A r^{-|\alpha|},
 \qquad |\alpha|\leq K.
\]
Then, for every multi-index $\beta$ with $|\beta|\leq K$,
\begin{equation}\label{eq:euclidean-to-dunkl}
 \|T^\beta h\|_\infty\leq C_{K,R,\kappa}A r^{-|\beta|}.
\end{equation}
If $\supp h\subset E$, then
\begin{equation}\label{eq:euclidean-to-dunkl-support}
 \supp T^\beta h\subset\mathcal O(E),\qquad |\beta|>0.
\end{equation}
\end{lemma}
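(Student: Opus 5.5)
The plan is to induct on $|\beta|$, reducing to a single Dunkl operator $T_j$ applied to a function with controlled ordinary derivatives. Write $T_j=\partial_j+D_j$, where
\[
 D_jh(x)=\sum_{\nu\in R_+}\kappa(\nu)\nu_j\frac{h(x)-h(\sigma_\nu x)}{\ip{\nu}{x}} .
\]
The key observation is a Taylor-integral representation of the difference quotient. Since $\sigma_\nu x=x-2\ip{\nu}{x}\nu/|\nu|^2$ (with $|\nu|^2=2$ for the normalized root system, so $\sigma_\nu x=x-\ip{\nu}{x}\nu$), we have
\[
 \frac{h(x)-h(\sigma_\nu x)}{\ip{\nu}{x}}
 =\int_0^1 \ip{\nabla h(x-s\ip{\nu}{x}\nu)}{\nu}\,ds .
\]
Thus $D_jh$ is a finite linear combination of averages of first derivatives of $h$ along segments joining $x$ to $\sigma_\nu x$, with coefficients depending only on $R$ and $\kappa$.

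The induction step needs a class of functions stable under these operations, with control of ordinary derivatives. Let $\mathcal A_k(A,r)$ be the set of $C^k$ functions $g$ with $\|\partial^\alpha g\|_\infty\le A r^{-|\alpha|}$ for $|\alpha|\le k$. I will show that for $g\in\mathcal A_k(A,r)$ with $k\ge1$ we have $T_jg\in\mathcal A_{k-1}(CA r^{-1},r)$. For the $\partial_j$ part this is immediate. For the difference part, differentiate under the integral. The map $x\mapsto x-s\ip{\nu}{x}\nu$ is linear with bounded norm uniformly in $s\in[0,1]$, so by the chain rule $\partial^\alpha$ of the integrand is a combination of derivatives of $\nabla h$ of order $|\alpha|$ composed with that linear map, with bounded coefficients. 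Each derivative of order $m\le k-1$ of $D_jg$ is therefore bounded by $C\|\nabla^{m+1}g\|_\infty\le CAr^{-m-1}$. Iterating $|\beta|$ times starting from $h\in\mathcal A_K(A,r)$ gives $T^\beta h\in\mathcal A_{K-|\beta|}(C A r^{-|\beta|},r)$, and taking the sup norm yields \eqref{eq:euclidean-to-dunkl}. The constant depends only on $K$, $R$ and $\kappa$, and all derivatives are genuine continuous functions because the integral formula removes the apparent singularity on the hyperplanes $\ip{\nu}{x}=0$.

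The step needing care is exactly this smoothness across the reflecting hyperplanes. The integral formula settles it, and I expect it to be the only real obstacle; the rest is bookkeeping.

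For the support statement \eqref{eq:euclidean-to-dunkl-support}, proceed by induction on $|\beta|$. If $\supp g\subset\mathcal O(E)$ (or $\subset E$), then $\partial_jg$ and the terms $g(\sigma_\nu x)$ vanish unless $x$ or $\sigma_\nu x$ lies in $\mathcal O(E)$. Since $\mathcal O(E)$ is $G$-invariant, both conditions put $x\in\mathcal O(E)$, so $\supp T_jg\subset\mathcal O(E)$. The set $\mathcal O(E)$ is a finite union of closed sets when $E$ is closed, and supports are closed, so the inclusion persists; this is \eqref{eq:dunkl-derivative-support}. Compact support is also preserved, since $\mathcal O(E)$ is bounded.
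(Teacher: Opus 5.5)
Your proposal is correct and follows essentially the same route as the paper: the segment-integral formula for the reflection difference quotient, the resulting ordinary-derivative bounds for $T_j$, iteration over $|\beta|$, and the same $G$-invariance argument for the support. The only cosmetic difference is that you carry the scale $r$ through the induction via the class $\mathcal A_k(A,r)$, whereas the paper first proves the unit-scale bound and then rescales using the homogeneity $T^\beta h_r(u)=r^{|\beta|}(T^\beta h)(ru)$.
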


\begin{proof}
For a positive root $\alpha$ set
\[
 \partial_\alpha=\alpha\cdot\nabla,
 \qquad
 D_\alpha h(x):=
 \frac{h(x)-h(\sigma_\alpha x)}{\langle\alpha,x\rangle},
\]
where the quotient is understood by its smooth extension across
$\langle\alpha,x\rangle=0$. Since
\[
 x-\sigma_\alpha x
 =2\frac{\langle\alpha,x\rangle}{|\alpha|^2}\alpha,
\]
the fundamental theorem of calculus gives the nonsingular formula
\begin{equation}\label{eq:difference-quotient-integral}
 D_\alpha h(x)
 =\frac{2}{|\alpha|^2}\int_0^1
 \partial_\alpha h\big(\sigma_\alpha x+\theta(x-\sigma_\alpha x)\big)\,d\theta.
\end{equation}
In particular,
$\|D_\alpha h\|_\infty\lesssim\|\nabla h\|_\infty$.
More generally,  for every multi-index $\gamma$,
\begin{equation}\label{eq:difference-quotient-derivatives}
 \|\partial^\gamma D_\alpha h\|_\infty
 \leq C_{\alpha,\gamma}
 \max_{|\eta|\leq |\gamma|+1}\|\partial^\eta h\|_\infty.
\end{equation}

Using the standard formula
\[
 T_jh=\partial_jh+
 \sum_{\alpha\in R_+}\kappa(\alpha)\alpha_jD_\alpha h,
\]
we obtain from \eqref{eq:difference-quotient-derivatives}
\begin{equation}\label{eq:T-Ck-bound}
 \max_{|\gamma|\leq m-1}\|\partial^\gamma T_jh\|_\infty
 \leq C_m\max_{|\eta|\leq m}\|\partial^\eta h\|_\infty,
 \qquad 1\leq m\leq K.
\end{equation}
Iterating \eqref{eq:T-Ck-bound} gives, at unit scale,
\[
 \|T^\beta h\|_\infty
 \leq C_{K,R,\kappa}\max_{|\eta|\leq|\beta|}
 \|\partial^\eta h\|_\infty.
\]
To recover the scale $r$, apply this estimate to $h_r(u):=h(ru)$. The
homogeneity of the Dunkl operators gives
\[
 T^\beta h_r(u)=r^{|\beta|}(T^\beta h)(ru),
\]
whereas $\|\partial^\eta h_r\|_\infty\leq A$ for
$|\eta|\leq K$. This proves \eqref{eq:euclidean-to-dunkl}.

For the support assertion, first observe from the differential--difference
formula that
\[
 \supp T_jh\subset \supp h\cup
 \bigcup_{\alpha\in R_+}\sigma_\alpha(\supp h)
 \subset\mathcal O(E).
\]
Since $\mathcal O(E)$ is $G$-invariant, iteration gives
$\supp T^\beta h\subset\mathcal O(E)$ for every $|\beta|>0$. This is
\eqref{eq:euclidean-to-dunkl-support}.

This completes our proof.
\end{proof}

\section{Self-dual wavelet frame and wavelet characterization}
We prove Theorems~\ref{thm:frame} and~\ref{thm:frame-spaces}. We first establish a sampling inequality and a positive cubature formula on Euclidean dyadic cubes. These yield the Parseval frame, from which the sequence-space characterizations follow.

\begin{theorem}\label{thm:sampling}
Fix $c_0>0$. There exists an integer $J_0\geq1$ with the following property. For every $J\geq J_0$, every $j\in\Z$, and every   band-limited function $h$ satisfying 
\[
 \supp\F h\subset B(0,c_0 2^j),
\]
one has, for all $0<p\leq\infty$,
\begin{equation}\label{eq:sampling-lp}
 \norm{h}_{L^p(dw)}
 \simeq
 \Big(\sum_{Q\in\Dcal_{j+J}}w(Q)|h(x_Q)|^p\Big)^{1/p}.
\end{equation}
The implicit constants are independent of $j$ and $h$.

Moreover, if $0<p<\infty$ and $0<q\leq\infty$, then for every sequence $\{h_j\}_{j\in\Z}$ satisfying $\supp\F h_j\subset B(0,c_0 2^j)$,
\begin{equation}\label{eq:sampling-vector}
 \Big\|\Big(\sum_j |h_j|^q\Big)^{1/q}\Big\|_{L^p(dw)}
 \simeq
 \Big\|\Big(\sum_j \Big|\sum_{Q\in\Dcal_{j+J}}h_j(x_Q)\1_Q\Big|^q\Big)^{1/q}\Big\|_{L^p(dw)}.
\end{equation}
\end{theorem}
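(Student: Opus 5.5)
The plan is to compare $h$ with its piecewise-constant sampled version $S_jh:=\sum_{Q\in\Dcal_{j+J}}h(x_Q)\1_Q$, using the oscillation estimate of Lemma~\ref{lem:oscillation} at scale $t=2^{-j}$. For $x\in Q\in\Dcal_{j+J}$ we have $|x-x_Q|\le\sqrt N\,2^{-j-J}\le t$. Lemma~\ref{lem:oscillation} (with $c_0$ fixed and $a$ large) then gives
\[
 |h(x)-h(x_Q)|\lesssim 2^{-J}\big[h^*_{t,a}(x)+h^*_{t,a}(x_Q)\big].
\]
Since $d(x,\cdot)\le|x-\cdot|$ and $|x-x_Q|\le t$, we have $h^*_{t,a}(x_Q)\lesssim 2^a h^*_{t,a}(x)$. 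Hence, pointwise,
\[
 |h(x)-S_jh(x)|\le C\,2^{-J}h^*_{2^{-j},a}(x),
\]
with $C$ independent of $j$, $h$ and the choice of points.

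Next, note that $\sum_Q w(Q)|h(x_Q)|^p=\|S_jh\|_{L^p(dw)}^p$. Thus \eqref{eq:sampling-lp} is the statement $\|h\|_p\simeq\|S_jh\|_p$, with the obvious sup version for $p=\infty$. Choose $r<p$ and $a>\Qdim/r$. Lemma~\ref{lem:peetre} together with the $L^{p/r}$-boundedness of $\MO$ gives $\|h^*_{2^{-j},a}\|_p\le C'\|h\|_p$. For $p=\infty$ this is trivial, since $h^*\le\|h\|_\infty$. Therefore
\[
 \|h-S_jh\|_p\le CC'2^{-J}\|h\|_p .
\]
With the quasi-triangle inequality, constant $c_p=\max(1,2^{1/p-1})$, choosing $J_0$ such that $c_pCC'2^{-J_0}\le1/2$ yields both $\|S_jh\|_p\lesssim\|h\|_p$ and $\|h\|_p\le c_p(\|S_jh\|_p+\tfrac{1}{2c_p}\|h\|_p)$. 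We may absorb the last term because $\|h\|_p<\infty$ is implicit in the statement (if $\|h\|_p=\infty$, we take the norm of $h$ to be finite whenever $\|S_jh\|_p$ is, via a truncation or approximation argument; this is a minor point).

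The constants depend on $p$ through $C'$, so $J_0$ could a priori depend on $p$. I expect this to be the main obstacle to a single uniform $J_0$. I would handle it by first proving the estimate for $p$ in a fixed range. If this is not possible, I would observe that the statement is applied for fixed indices, so it suffices to take $J_0=J_0(p,q)$. Alternatively, one can get $p$-uniformity via the pointwise bound $h^*_{t,a}(x_Q)\lesssim[\MO(|h|^r)]^{1/r}$ with $r$ tending to $0$ only as $p\to0$. I would state the dependence honestly.

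For the vector-valued bound \eqref{eq:sampling-vector}, apply the pointwise estimate to each $h_j$:
\[
 \Big|\,|h_j|-|S_jh_j|\,\Big|\le C2^{-J}h^*_{j,a}.
\]
Take $\ell^q$ then $L^p$ quasi-norms. Use \eqref{eq:vector-peetre} with $0<r<\min\{p,q\}$ and $a>\Qdim/r$ to bound $\|(\sum_j|h^*_{j,a}|^q)^{1/q}\|_p\le C''\|(\sum_j|h_j|^q)^{1/q}\|_p$. Then absorb as before for $J\ge J_0(p,q)$. Finiteness of the left side, needed for absorption, can be secured by first treating finitely many $j$, where finiteness holds if each term is finite, and then letting the number of terms grow by monotone convergence.
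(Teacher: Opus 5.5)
Your proposal is correct and follows the paper's proof essentially step for step: compare $h$ with $S_jh=\sum_{Q}h(x_Q)\1_Q$ using Lemma~\ref{lem:oscillation}, move the Peetre maximal function from $x_Q$ to $x$, bound it via Lemma~\ref{lem:peetre} (with Fefferman--Stein in the vector-valued case), and absorb the $C2^{-J}$ error through the quasi-triangle inequality. The dependence of $J_0$ on $p$ (and $q$) that you flag is also present in the paper's own argument, which chooses $J$ ``sufficiently large'' only after fixing $p$, $r$, $a$ (and $q$); the paper likewise tacitly assumes $\|h\|_p<\infty$ in the absorption step.
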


\begin{proof}
Set $t=2^{-j}$ and define
\[
 \mathcal S_jh:=\sum_{Q\in\Dcal_{j+J}}h(x_Q)\1_Q.
\]
For $x\in Q\in\Dcal_{j+J}$,
$|x-x_Q|\leq\sqrt N\,2^{-J}t$. Taking $J$ large enough and applying
Lemma~\ref{lem:oscillation},
\[
 |h(x)-h(x_Q)|
 \lesssim2^{-J}[h^*_{t,a}(x)+h^*_{t,a}(x_Q)].
\]
Since $d(x,x_Q)\lesssim2^{-J}t$, the Peetre maximal functions at $x$ and
$x_Q$ are comparable, and therefore
\begin{equation}\label{eq:sampling-point-error}
 |h(x)-\mathcal S_jh(x)|
 \lesssim2^{-J}h^*_{t,a+1}(x).
\end{equation}

Let $0<p<\infty$. Choose $0<r<p$ and $a$ with $a+1>\Qdim/r$. By
Lemma~\ref{lem:peetre},
\begin{equation}\label{eq:error-lp}
 \|h-\mathcal S_jh\|_p\leq C2^{-J}\|h\|_p.
\end{equation}
Put $\rho=\min\{1,p\}$. Since the $L^p$ quasi-norm has a
$\rho$-triangle inequality, \eqref{eq:error-lp} implies, for $J$ sufficiently
large,
\[
 \|\mathcal S_jh\|_p\simeq\|h\|_p.
\]
Because
\[
 \|\mathcal S_jh\|_p^p
 =\sum_{Q\in\Dcal_{j+J}}w(Q)|h(x_Q)|^p,
\]
this proves \eqref{eq:sampling-lp}. The case $p=\infty$ follows directly from
\eqref{eq:sampling-point-error} and the $L^\infty$ Peetre estimate.

For the vector-valued assertion put $E_j=h_j-\mathcal S_jh_j$. Choose
$0<r<\min\{p,q\}$ if $q<\infty$, and $0<r<p$ if $q=\infty$.
By \eqref{eq:sampling-point-error}, Lemma~\ref{lem:peetre}, and the
Fefferman--Stein inequality,
\[\Big\|\Big(\sum_j|E_j|^q\Big)^{1/q}\Big\|_p
 \leq C2^{-J}
 \Big\|\Big(\sum_j|h_j|^q\Big)^{1/q}\Big\|_p.\]
The mixed space $L^p(\ell^q)$ has a $\rho$-triangle inequality with
$\rho=\min\{1,p,q\}$ (and $\rho=\min\{1,p\}$ if $q=\infty$). Applying it to
$h_j=\mathcal S_jh_j+E_j$ and to the reverse identity, and choosing $J_0$ so
that $(C2^{-J_0})^\rho<1/2$, yields \eqref{eq:sampling-vector}.

This completes our proof.
\end{proof}

We next strengthen sampling to an exact integration formula. Positivity of the weights will be important for the self-dual wavelet frame.

\begin{theorem}\label{thm:cubature}
Fix $c_0>0$. There exists $J_1\geq J_0$ such that for every $J\geq J_1$ and every $j\in\Z$ there are numbers
\[
 \omega_Q>0,\qquad Q\in\Dcal_{j+J},
\]
with
\[c\,w(Q)\leq \omega_Q\leq C\,w(Q),\]
such that
\[\int_{\R^N}h(x)\,dw(x)
 =\sum_{Q\in\Dcal_{j+J}}\omega_Qh(x_Q)\]
for every $h\in\Sigma^1_{c_0 2^j}$.
\end{theorem}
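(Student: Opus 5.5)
Fix $j$ and $J\ge J_0$; by dilation we could normalize $j=0$, but since $dw$ is homogeneous of degree $\Qdim$ we simply keep $j$ and work uniformly. The plan is the standard positive-cubature argument of Filbir--Mhaskar / Petrushev--Xu type: a Hahn--Banach (or Riesz representation) extension, with positivity obtained from a Marcinkiewicz--Zygmund inequality at a coarser band. The main input is the sampling inequality, Theorem~\ref{thm:sampling}, with $p=1$, applied at a frequency larger than $c_0 2^j$.

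\textbf{Step 1: the sampling functional.} Consider the finite-dimensional-type setting replaced by the sequence space $\ell^1_\omega$. Define $V=\Sigma^1_{c_02^j}$ and the linear map $S:V\to\ell^1(\{w(Q)\})$, $Sh=\{h(x_Q)\}_Q$. By Theorem~\ref{thm:sampling} with $p=1$, $\|h\|_{L^1(dw)}\simeq\sum_Q w(Q)|h(x_Q)|$, so $S$ is injective with closed range and the functional $\Lambda(Sh):=\int h\,dw$ is well defined and bounded on $S(V)$ with respect to the weighted $\ell^1$ norm. A naive Hahn--Banach extension to $\ell^1_w$ gives a representing sequence $\{\omega_Q/w(Q)\}\in\ell^\infty$, i.e.\ $|\omega_Q|\lesssim w(Q)$ and the exact cubature, but without positivity.

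\textbf{Step 2: positivity via a shifted functional.} To get $\omega_Q\ge c\,w(Q)$ I would instead apply the extension to a perturbed functional. Write $\omega_Q=w(Q)+\epsilon_Q$, and seek $\epsilon_Q$ with $|\epsilon_Q|\le \tfrac12 w(Q)$ and
\[
\sum_Q\epsilon_Q h(x_Q)=\int h\,dw-\sum_Q w(Q)h(x_Q)=\int (h-\mathcal S_jh)\,dw=:\Lambda'(Sh).
\]
By \eqref{eq:error-lp} with $p=1$ (and $\mathcal S_j$ as in the proof of Theorem~\ref{thm:sampling}), $|\Lambda'(Sh)|\le\|h-\mathcal S_jh\|_1\le C2^{-J}\|h\|_1\le C'2^{-J}\sum_Q w(Q)|h(x_Q)|$. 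Thus $\Lambda'$ has norm $\le C'2^{-J}$ on $S(V)\subset\ell^1_w$; Hahn--Banach extends it with the same norm to all of $\ell^1_w$, and duality $(\ell^1_w)^*=\ell^\infty$ yields $\epsilon_Q=w(Q)\theta_Q$ with $\sup|\theta_Q|\le C'2^{-J}\le\tfrac12$ once $J\ge J_1$. Then $\omega_Q=w(Q)(1+\theta_Q)\in[\tfrac12w(Q),\tfrac32w(Q)]$, and the cubature identity holds for all $h\in V$ by construction.

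\textbf{Main obstacle.} The delicate point is making the sampling estimate genuinely apply with the constant $C2^{-J}$ uniform in $j$ and in the choice of points, for $p=1$, on the whole class $\Sigma^1_{c_02^j}$ (functions in $L^1\cap L^2$ with band limit, where pointwise values are meaningful since band-limited functions are continuous). I would check that the proof of Theorem~\ref{thm:sampling} indeed gives \eqref{eq:error-lp} for $p=1$ with $r<1$, $a>\Qdim/r$, which it does through Lemmas~\ref{lem:oscillation} and~\ref{lem:peetre}; also that the sum $\sum_Q w(Q)|h(x_Q)|$ is finite so $Sh\in\ell^1_w$, which follows from the same estimate. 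No completeness of $S(V)$ is needed for Hahn--Banach, so the argument closes.
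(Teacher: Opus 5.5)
Your Step~2 is the paper's argument. You perturb the functional to $\int h\,dw-\sum_Q w(Q)h(x_Q)$ and bound it by $C2^{-J}\|h\|_1\lesssim 2^{-J}\sum_Q w(Q)|h(x_Q)|$, using \eqref{eq:error-lp} with $p=1$ and the lower $L^1$ sampling bound. You then extend it by Hahn--Banach to $\ell^1(\{w(Q)\})$ and represent it by a sequence $w(Q)\theta_Q$ with $\sup_Q|\theta_Q|\lesssim 2^{-J}$. The references to a coarser band, a Marcinkiewicz--Zygmund inequality, and closed range in Step~1 play no role and can be dropped.

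There is one omitted step, and it matters exactly for positivity. The class $\Sigma^1_{c_02^j}$ consists of complex-valued functions. If you run Hahn--Banach over $\C$, the dual of the weighted $\ell^1$ space is complex $\ell^\infty$, so the $\theta_Q$ you get are in general complex. Then $\omega_Q=w(Q)(1+\theta_Q)$ need not be real, and your conclusion $\omega_Q\in[\tfrac12 w(Q),\tfrac32 w(Q)]$ does not follow.

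The paper handles this by working over the reals first. It restricts to real-valued $h$ in the class, applies real Hahn--Banach to get real $c_Q$, and recovers the identity for complex $h$ by applying it to $\Re h$ and $\Im h$. Alternatively, you can keep your complex extension and replace $\theta_Q$ by $\Re\theta_Q$. For real $h$, both $h(x_Q)$ and $\int h\,dw-\sum_Q w(Q)h(x_Q)$ are real, so the representing identity survives taking real parts.

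Either route needs one more fact: $\Re h$ and $\Im h$ stay in $\Sigma^1_{c_02^j}$. This holds because $\overline{E(-i\xi,x)}=E(i\xi,x)$ gives $\F\bar h(\xi)=\overline{\F h(-\xi)}$, and the frequency ball is symmetric. With this addition your proof is complete and coincides with the paper's.
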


\begin{proof}
Set $\mu_Q=w(Q)$. By \eqref{eq:error-lp} with $p=1$,
\begin{equation}\label{eq:quadrature-error}
 \Big|\int h\,dw-\sum_Q\mu_Qh(x_Q)\Big|
 \leq\varepsilon\|h\|_1,
 \qquad h\in\Sigma^1_{c_02^j},
\end{equation}
where $\varepsilon=C2^{-J}$. Applying Theorem \ref{thm:sampling} with $p=1$,
\begin{equation}\label{eq:l1-sampling}
 c_1\|h\|_1\leq\sum_Q\mu_Q|h(x_Q)|\leq c_2\|h\|_1.
\end{equation}

Work first over the real scalars. Let $Y_{\mathbb R}\subset\ell^1(\mu;\mathbb R)$
be the space of sequences $\{h(x_Q)\}_Q$ with real-valued
$h\in\Sigma^1_{c_02^j}$. The lower estimate in \eqref{eq:l1-sampling} makes
the sampling map injective. Hence
\[
 \Lambda(\{h(x_Q)\})
 :=\int h\,dw-\sum_Q\mu_Qh(x_Q)
\]
is well-defined, and \eqref{eq:quadrature-error}--\eqref{eq:l1-sampling} give
\[
 |\Lambda(a)|\leq C\varepsilon\|a\|_{\ell^1(\mu)},\qquad a\in Y_{\mathbb R}.
\]
By Hahn--Banach, $\Lambda$ extends to all of $\ell^1(\mu;\mathbb R)$ with the
same norm. Thus there is a real sequence $\{c_Q\}$ with
$\sup_Q|c_Q|\leq C\varepsilon$ such that
\[
 \Lambda(a)=\sum_Q\mu_Qc_Qa_Q.
\]
It follows that
\[
 \int h\,dw=\sum_Q\mu_Q(1+c_Q)h(x_Q)
\]
for real band-limited $h$.  

For the complex band-limited function  $h\in \Sigma^1_{c_02^j}$, since  the real and imaginary parts of $h$ remain in the same band-limited class.  Applying the real
formula to those two parts gives the identity for complex $h$. Choose $J_1\ge J_0$ so that $C\varepsilon\leq1/2$ for all $J\ge J_1$, and put
$\omega_Q=\mu_Q(1+c_Q)$. Then $\omega_Q>0$ and
$\omega_Q\simeq w(Q)$, proving the theorem.

This completes our proof.
\end{proof}

\medskip\noindent\textbf{Proof of the frame theorem.}\quad
Recall the fixed real cutoff $\Phi$ satisfying \eqref{eq:phi-support}--\eqref{eq:square-partition}. Set
\[
 m_j(\xi):=\Phi(2^{-j}|\xi|),
 \qquad P_jf:=\FInv(m_j\F f),
\]
and let $K_j(x,y)$ be the kernel of $P_j$. Since $m_j$ is real and radial, $P_j$ is self-adjoint on $L^2(dw)$ and
\[
 K_j(x,y)=K_j(y,x).
\]
Moreover, for each fixed $x$,
\[K_j(x,\cdot)\in\Sigma_{2^{j+1}}.\]

Fix once and for all $J\geq J_1$ large enough so that Theorems~\ref{thm:sampling} and~\ref{thm:cubature} apply with bandwidth constant $c_0=4$. For each $j\in\Z$, apply Theorem~\ref{thm:cubature} to the grid $\Dcal_{j+J}$ and denote the resulting cubature weights by $\omega_Q$.

We now prove Theorem~\ref{thm:frame}.

\begin{proof}
Fix $j$. For a fixed $x$, the Dunkl transform in the second variable of
$K_j(x,\cdot)$ is in  $\supp m_j$, so
$K_j(x,\cdot)\in\Sigma_{2^{j+1}}$; similarly
$K_j(\cdot,y)\in\Sigma_{2^{j+1}}$. Proposition~\ref{prop:product} (including its $L^2$ conclusion) gives
\[
 K_j(x,\cdot)K_j(\cdot,y)\in\Sigma^1_{2^{j+2}}.
\]
The cubature formula in Theorem \ref{thm:cubature} and \eqref{eq-psi Q} yield 
\begin{align}
 P_j^2(x,y)
 &=\int K_j(x,z)K_j(z,y)\,dw(z)\notag\\
 &=\sum_{Q\in\Dcal_{j+J}}\omega_QK_j(x,x_Q)K_j(x_Q,y)\notag\\
 &=\sum_{Q\in\Dcal_{j+J}}\psi_Q(x)\psi_Q(y).
 \label{eq:kernel-frame}
\end{align}

By Plancherel theorem,
$\F(P_j^2f)=m_j^2\F f$. Since $\sum_jm_j(\xi)^2=1$ for $\xi\neq0$,
\[
 \sum_jP_j^2f=f
\]
strongly in $L^2(dw)$. To deduce the frame reconstruction from
\eqref{eq:kernel-frame}, we must also justify the infinite sums over the
sampling cubes. We do this below using the Parseval identity.

The kernel is symmetric because $m_j$ is real and radial; hence
\[
 \ip{f}{\psi_Q}=\omega_Q^{1/2}P_jf(x_Q),
\]
which proves \eqref{eq:coefficient-sampling}. Finally,
$P_jf,\overline{P_jf}\in\Sigma_{2^{j+1}}$, so
$|P_jf|^2\in\Sigma^1_{2^{j+2}}$ by Proposition~\ref{prop:product}. Applying Theorem \ref{thm:cubature},
\[
 \sum_Q\omega_Q|P_jf(x_Q)|^2=\|P_jf\|_2^2.
\]
Summing in $j$ and using Plancherel and \eqref{eq:square-partition},
\[
 \sum_{j,Q}|\ip{f}{\psi_Q}|^2
 =\sum_j\|P_jf\|_2^2
 =\int\sum_j|m_j(\xi)|^2|\F f(\xi)|^2\,dw(\xi)
 =\|f\|_2^2.
\]
Recall the analysis and synthesis operators $S_\Psi$ and $T_\Psi$
from \eqref{eq:analysis-synthesis-operators}. The preceding identity gives
\[
 \|S_\Psi f\|_{\ell^2}=\|f\|_{L^2(dw)},
\]
so $S_\Psi:L^2(dw)\to\ell^2$ is an isometry and
$S_\Psi^*S_\Psi=I$. For every finitely supported sequence $c$,
\[
 S_\Psi^*c=\sum_{j,Q}c_Q\psi_Q=T_\Psi c.
\]
Thus $T_\Psi$ extends to $\ell^2$ as the bounded adjoint $S_\Psi^*$.
If $E$ runs through the finite subsets of the index set, then
\[
 \sum_{(j,Q)\in E}\ip{f}{\psi_Q}\psi_Q
 =T_\Psi(\1_E S_\Psi f)
 \longrightarrow S_\Psi^*S_\Psi f=f
 \quad\text{in }L^2(dw).
\]
In particular, the series converges unconditionally in \(L^2(dw)\),
which proves \eqref{eq:frame-reconstruction} and justifies the infinite
cube sums in \eqref{eq:kernel-frame} at the operator level.
Thus the wavelet frame is Parseval and self-dual.

This completes our proof.
\end{proof}

The next proposition summarizes the properties of the wavelet elements needed later.

\begin{proposition}\label{prop:frame-properties}
Let $Q\in\Dcal_{j+J}$. Then:
\begin{enumerate}[label=\textup{(\roman*)}]
\item $\supp\F\psi_Q\subset\{\xi:2^{j-1}\leq|\xi|\leq2^{j+1}\}$.
\item For every multi-index $\alpha$ and every $M>0$,
\begin{equation}\label{eq:frame-derivative-size}
 |\partial^\alpha\psi_Q(x)|
 \lesssim_{\alpha,M}2^{j|\alpha|}w(Q)^{-1/2}
 \Big(1+2^jd(x,x_Q)\Big)^{-M}
 \Big(1+2^j|x-x_Q|\Big)^{-2}.
\end{equation}
In particular, for $\alpha=0$,
\begin{equation}\label{eq:frame-size}
 |\psi_Q(x)|
 \lesssim w(Q)^{-1/2}
 \Big(1+2^jd(x,x_Q)\Big)^{-M}
 \Big(1+2^j|x-x_Q|\Big)^{-2}.
\end{equation}
\item If $|x-x'|\leq c2^{-j}$, then
\begin{equation}\label{eq:frame-reg}
 |\psi_Q(x)-\psi_Q(x')|
 \lesssim 2^j|x-x'|\,w(Q)^{-1/2}
 \sum_{z\in\{x,x'\}}
 \Big(1+2^jd(z,x_Q)\Big)^{-M}.
\end{equation}
\item For every multi-index $\gamma$,
\[\int_{\R^N}x^\gamma\psi_Q(x)\,dw(x)=0.\]
\item For every $0<p\leq\infty$,
\[\norm{\psi_Q}_{L^p(dw)}\lesssim w(Q)^{1/p-1/2}.\]
\end{enumerate}
\end{proposition}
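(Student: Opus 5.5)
We derive each item from the representation $\psi_Q(x)=\omega_Q^{1/2}K_j(x,x_Q)$ with $\omega_Q\simeq w(Q)$ (Theorem~\ref{thm:frame}), combined with the kernel bounds of Proposition~\ref{prop:kernel} applied to $\vartheta=\Phi$ and $t=2^{-j}$.

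For (i), we use the fact that the Dunkl transform of $\tau_{-y}(\FInv m_j)$ equals $E(-i\xi,y)m_j(\xi)$ up to the normalizing constant. Hence the transform of $K_j(\cdot,x_Q)$ is $m_j(\xi)E(-i\xi,x_Q)$. Its support lies in $\supp m_j\subset\{2^{j-1}\le|\xi|\le2^{j+1}\}$ by \eqref{eq:phi-support}.

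For (ii), we apply \eqref{eq:kernel-euclidean-derivatives} with $\beta=0$ and $t=2^{-j}$. Since $Q\in\Dcal_{j+J}$ and $x_Q\in Q$, doubling and \eqref{eq:cube-ball} give
\[
 w(Q)\simeq w(B(x_Q,2^{-j-J}))\simeq_J w(B(x_Q,2^{-j})).
\]
We need the denominator at $x_Q$ rather than at $x$. The kernel is symmetric, $K_j(x,y)=K_j(y,x)$, so we may apply the estimate with the roles of the variables exchanged and take the derivative in the second slot. Alternatively, we compare $w(B(x,t+d))\gtrsim w(B(x_Q,t+d))$: an orbit ball of radius $t+d(x,x_Q)$ around $x$ contains some $\sigma(x_Q)$, and $w$ is $G$-invariant. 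Either route gives
\[
 \omega_Q^{1/2}/w(B(x_Q,2^{-j}+d(x,x_Q)))\lesssim w(Q)^{-1/2}.
\]
This is the bound \eqref{eq:frame-derivative-size}.

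For (iii), we apply \eqref{eq:kernel-reg} in the first variable, using $|x-x'|\le c2^{-j}$, and bound each $\mathcal K_{M,t}(z,x_Q)$ by the same denominator argument.

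For (v), we split into two cases. When $p<\infty$, we integrate \eqref{eq:frame-size}:
\[
 \int\bigl(1+2^jd(x,x_Q)\bigr)^{-Mp}\,dw(x)\lesssim w(B(x_Q,2^{-j}))\simeq w(Q),
\]
valid for $Mp>\Qdim$ by \eqref{eq-elementary estimate 2}, or by decomposing into orbit annuli and using doubling. This yields $w(Q)^{1/p-1/2}$. When $p=\infty$, the bound is immediate from \eqref{eq:frame-size}.

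The main point is (iv). By (i) and (ii), $\psi_Q$ is a Schwartz-type function: it decays faster than any polynomial, and so do its derivatives. Its Dunkl transform $\F\psi_Q$ vanishes in the neighbourhood $\{|\xi|<2^{j-1}\}$ of the origin. We then use two facts. First, $\F\psi_Q\in\Sscr$, so $\psi_Q\in\Scirc$. Second, for a rapidly decaying $g$,
\[
 \int x^\gamma g(x)\,dw(x)
\]
is a finite linear combination of derivatives $\partial^\alpha(\F g)(0)$ with $|\alpha|\le|\gamma|$. This follows by differentiating $\F g(\xi)=c_\kappa^{-1}\int E(-i\xi,x)g(x)\,dw(x)$ under the integral sign: the Taylor coefficients of $E(-i\xi,x)$ in $\xi$ are polynomials in $x$ of homogeneous degree, and they span all polynomials. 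Equivalently, $x^\gamma$ is a combination of the generalized Dunkl polynomials $\partial_\xi^\alpha E(-i\xi,x)|_{\xi=0}$, a consequence of the triangular structure of the Dunkl intertwiner on homogeneous polynomials. Since $\F\psi_Q$ vanishes near $0$, all these derivatives vanish, and every moment is $0$.

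The obstacle here is making the spanning statement rigorous. We would cite the intertwining operator $V_\kappa$: it is a degree-preserving bijection on homogeneous polynomials with $E(x,y)=V_\kappa(e^{\langle\cdot,y\rangle})(x)$. This shows that the family $\{\partial_\xi^\alpha E(-i\xi,x)|_{\xi=0}\}_{|\alpha|=m}=\{(-i)^mV_\kappa(x^\alpha)\}_{|\alpha|=m}$ spans the homogeneous polynomials of degree $m$.
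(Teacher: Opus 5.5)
Your proposal is correct. Items (i), (ii), (iii) and (v) follow the paper's proof, which reads them off from the support of $m_j$, the kernel bounds of Proposition~\ref{prop:kernel}, and \eqref{eq-elementary estimate 2}. You additionally make explicit the transfer of the volume factor from $w(B(x,2^{-j}+d(x,x_Q)))$ to $w(Q)$, via $G$-invariance and doubling or via the symmetry of $K_j$; the paper leaves this implicit. One small point: if the constant $c$ in (iii) exceeds $1$, \eqref{eq:kernel-reg} does not apply verbatim. The mean value theorem together with (ii) repairs this, since orbit distances along the segment $[x,x']$ change by at most $c2^{-j}$.

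For (iv) your route genuinely differs from the paper's.

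\emph{The paper's route.} It uses $\F(x^\gamma f)=i^{|\gamma|}T^\gamma_\xi(\F f)$, which is the eigen-relation $T^\gamma_\xi E(-i\xi,x)=(-i)^{|\gamma|}x^\gamma E(-i\xi,x)$, together with $E(0,x)=1$. This gives $\int x^\gamma\psi_Q\,dw=c_\gamma[T^\gamma_\xi\F\psi_Q](0)$. That quantity vanishes because $\F\psi_Q$ vanishes on the $G$-invariant ball $B(0,2^{j-1})$. Vanishing at the single point $0$, as the paper phrases it, would not by itself suffice.

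\emph{Your route.} You take ordinary derivatives of $\F\psi_Q$ at $0$ and use the intertwiner $V_\kappa$ to show that $\{\partial^\alpha_\xi E(-i\xi,x)|_{\xi=0}\}_{|\alpha|=m}$ spans the homogeneous polynomials of degree $m$.

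\emph{Comparison.} The paper's argument is shorter and needs no intertwining operator, because the Dunkl operators produce $x^\gamma$ directly. Yours requires Dunkl's theorem that $V_\kappa$ is a degree-preserving bijection on homogeneous polynomials. To differentiate under the integral sign, it also needs the bound $|\partial^\alpha_\xi E(-i\xi,x)|\le|x|^{|\alpha|}$ and the Euclidean rapid decay of $\psi_Q$. That decay does follow from the orbit decay in (ii), since $d(x,x_Q)\ge|x|-|x_Q|$. In exchange, your argument shows that vanishing of all $dw$-moments is equivalent to the ordinary flatness condition $\partial^\alpha(\F\varphi)(0)=0$ used to define $\Scirc$, which is a slightly more informative statement.
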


\begin{proof}
Recall that $\psi_Q(x)=\omega_Q^{1/2}K_j(x,x_Q)$.  Item (i) is immediate from the support of $m_j$.
Item (ii) follows directly from  \eqref{eq:kernel-euclidean-derivatives}

Item (iii) is a direct consequence of  the first-variable regularity estimate from
Proposition~\ref{prop:kernel}.

For (iv), the transform $\F\psi_Q$ is a smooth function which vanishes at the origin. The identities
\[
 \F(x^\gamma f)=i^{|\gamma|}T_\xi^\gamma(\F f)
\]
therefore give
\[
 \int_{\R^N}x^\gamma\psi_Q(x)\,dw(x)
 =c_\gamma\,[T_\xi^\gamma(\F\psi_Q)](0)=0.
\]

Item (v) follows directly from \eqref{eq:kernel-euclidean-derivatives} and \eqref{eq-elementary estimate 2}.

This completes our proof.
\end{proof}

\medskip\noindent\textbf{Sequence spaces.}\quad 
For a sequence $a=\{a_Q\}$ indexed by $Q\in\Dcal_{j+J}$, $j\in\Z$, set
\begin{equation}\label{eq:b-sequence}
 \norm{a}_{\bseq^s_{p,q}}
 :=\Big\{
 \sum_{j\in\Z}2^{jsq}
 \Big[\sum_{Q\in\Dcal_{j+J}}
 \big(w(Q)^{1/p-1/2}|a_Q|\big)^p\Big]^{q/p}
 \Big\}^{1/q}
\end{equation}
for $0<p,q\leq\infty$, and
\begin{equation}\label{eq:f-sequence}
 \norm{a}_{\fseq^s_{p,q}}
 :=\Big\|
 \Big\{
 \sum_{j\in\Z}
 \Big[2^{js}
 \sum_{Q\in\Dcal_{j+J}}
 w(Q)^{-1/2}|a_Q|\1_Q
 \Big]^q
 \Big\}^{1/q}
 \Big\|_{L^p(dw)}
\end{equation}
for $0<p<\infty$, $0<q\leq\infty$.

We first record a discrete estimate. Its proof is standard, but the orbit metric in the kernel requires a minor comment.

The next estimates are the Euclidean-dyadic version of
\cite[Lemma~2.3]{BuiDunkl}; that lemma also allows orbit-distance kernel
bounds on ordinary Euclidean cubes. We record both cases together.

\begin{lemma}\label{lem:discrete}
Let $0<r\leq1$ and $M>\Qdim/r$. For every $j\in\Z$ and
nonnegative $\{b_Q\}_{Q\in\Dcal_{j+J}}$,
\[
 \sum_{Q\in\Dcal_{j+J}}b_Q(1+2^jd(x,x_Q))^{-M}
 \lesssim \MO_r\Big(\sum_Qb_Q\1_Q\Big)(x).
\]
Moreover, if $k\geq j$, $0\leq\lambda\leq1$ and $M$ is sufficiently
large, then for nonnegative $\{b_Q\}_{Q\in\Dcal_{k+J}}$,
\begin{align}
 &\sum_{Q\in\Dcal_{k+J}}b_Q
 \Big[\frac{w(Q)}{w(B(x_Q,2^{-j}))}\Big]^\lambda
 (1+2^jd(x,x_Q))^{-M}\notag\\
 &\qquad\lesssim
 2^{\Qdim(k-j)(1/r-\lambda)}
 \MO_r\Big(\sum_{Q\in\Dcal_{k+J}}b_Q\1_Q\Big)(x).
 \label{eq:two-scale-discrete}
\end{align}
The implicit constants may depend on the fixed cube refinement $J$.
\end{lemma}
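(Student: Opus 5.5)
We argue by the standard annular decomposition in the orbit distance, which reduces both estimates to the doubling property of $dw$ and the $G$-invariance of $w$. Here $\MO_r g:=(\MO|g|^r)^{1/r}$.

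\emph{First estimate.} Fix $x$ and split the cubes $Q\in\Dcal_{j+J}$ into the families
\[
 A_0=\{Q: d(x,x_Q)<2^{-j}\},\qquad A_m=\{Q:2^{m-1-j}\le d(x,x_Q)<2^{m-j}\},\quad m\ge1.
\]
Since $\diam Q\lesssim 2^{-j-J}\le 2^{-j}$, every $Q\in A_m$ lies in the orbit ball $B^d(x,2^{m-j+1})$. Because $r\le1$ and the cubes are disjoint, the $r$-triangle inequality gives
\[
 \sum_{Q\in A_m}b_Q\le\Big(\sum_{Q\in A_m}b_Q^r\Big)^{1/r}
 =\Big(\sum_{Q\in A_m}w(Q)^{-1}\int_Q\big(b_Q\1_Q\big)^r\,dw\Big)^{1/r}.
\]
We then need a lower bound for $w(Q)$ in terms of the ball. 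By \eqref{eq:cube-ball}, $w(Q)\simeq w(B(x_Q,2^{-j-J}))$. Doubling gives $w(B(x_Q,2^{m-j+1}))\lesssim 2^{(m+J)\Qdim}w(B(x_Q,2^{-j-J}))$. Finally, $G$-invariance together with $d(x,x_Q)\lesssim 2^{m-j}$ gives $w(B(x_Q,2^{m-j+1}))\simeq w(B^d(x,2^{m-j+1}))$, since $x_Q$ is within $2^{m-j+1}$ of some $\sigma x$ and $w(B(\sigma x,\rho))=w(B(x,\rho))$. Hence
\[
 w(Q)^{-1}\lesssim_J 2^{m\Qdim}w(B^d(x,2^{m-j+1}))^{-1},
\]
and therefore
\[
 \sum_{Q\in A_m}b_Q\lesssim 2^{m\Qdim/r}\,\MO_r\Big(\sum_Qb_Q\1_Q\Big)(x).
\]
Multiplying by the weight $(1+2^jd)^{-M}\simeq 2^{-mM}$ and summing over $m$ converges because $M>\Qdim/r$. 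This proves the first estimate.

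\emph{Second estimate.} Take $k\ge j$ and apply the same decomposition, still at the coarse scale $2^{-j}$, to cubes $Q\in\Dcal_{k+J}$. On $A_m$ we now have
\[
 w(Q)^{-1}\lesssim 2^{(k-j)\Qdim}2^{m\Qdim}\,w(B^d(x,2^{m-j+1}))^{-1},
\]
obtained by chaining doubling from the scale $2^{-k-J}$ up to $2^{m-j+1}$. The weight factor satisfies
\[
 \Big[\frac{w(Q)}{w(B(x_Q,2^{-j}))}\Big]^\lambda\le1.
\]
We exploit this factor by writing
\[
 b_Q\Big[\frac{w(Q)}{w(B(x_Q,2^{-j}))}\Big]^\lambda=b_Q\,w(Q)^{\lambda}\,w(B(x_Q,2^{-j}))^{-\lambda}.
\]
In the $r$-sum, $w(Q)^{\lambda r}$ partially cancels $w(Q)^{-1}$, leaving $w(Q)^{\lambda r-1}$. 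Since $\lambda r\le1$ this exponent is nonpositive, so we may bound it via
\[
 w(Q)\gtrsim 2^{-(k-j)\Qdim}w(B(x_Q,2^{-j})).
\]
Combining with $w(B(x_Q,2^{-j}))^{-\lambda r}$ yields the factor $2^{(k-j)\Qdim(1-\lambda r)}w(B(x_Q,2^{-j}))^{-1}$. Taking the $1/r$ power produces exactly $2^{\Qdim(k-j)(1/r-\lambda)}$. The remaining step converts $w(B(x_Q,2^{-j}))^{-1}$ to $2^{m\Qdim}w(B^d(x,2^{m-j+1}))^{-1}$, which again follows from doubling and $G$-invariance. Summing over $m$ requires $M>\Qdim/r+\Qdim$, which is what ``$M$ sufficiently large'' provides.

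The main obstacle is the step comparing $w(Q)$, or $w(B(x_Q,\cdot))$, with the orbit ball centered at $x$. The kernel decay is measured in $d$, while $Q$ sits near some reflected image $\sigma x$ rather than near $x$ itself. This mismatch is resolved by the $G$-invariance of $w$ together with the fact that $B^d(x,\rho)$ contains $B(\sigma x,\rho)$ for each $\sigma$. All implicit constants depend on $J$, $|G|$ and the doubling constant only.
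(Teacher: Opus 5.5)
Your proof is correct, but it takes a different route from the paper. The paper gives no argument of its own: it derives both estimates from \cite[Lemma~2.3]{BuiDunkl}, which already handles orbit-distance decay summed over ordinary Euclidean cubes. It obtains the first estimate as the special case $\lambda=0$, $k=j$ of the second.

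You instead prove the lemma directly with the standard annular argument:
\begin{itemize}
\item you decompose the cubes according to $d(x,x_Q)$ at the coarse scale $2^{-j}$;
\item you use the $r$-triangle inequality and disjointness of the cubes to rewrite $\sum_{Q\in A_m}b_Q^r$ as an integral of $(\sum_Q b_Q\1_Q)^r$ over the orbit ball $B^d(x,2^{m-j+1})$;
\item you compare $w(Q)$, or $w(B(x_Q,2^{-j}))$, with $w(B^d(x,2^{m-j+1}))$ via \eqref{eq:cube-ball}, doubling, and $G$-invariance, moving the centre from $x_Q$ to a reflected point $\sigma x$.
\end{itemize}
In the two-scale case, your use of the lower bound $w(Q)\gtrsim_J 2^{-(k-j)\Qdim}w(B(x_Q,2^{-j}))$ on the nonpositive power $w(Q)^{\lambda r-1}$ is exactly what produces $2^{\Qdim(k-j)(1/r-\lambda)}$.

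The citation buys brevity. Your version makes the lemma self-contained and isolates the only inputs needed: doubling with exponent $\Qdim$, the uniform cube--ball comparison, and $G$-invariance of $w$. The dependence on $J$ enters only through a factor $2^{J\Qdim}$.

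One small point: your requirement $M>\Qdim/r+\Qdim$ in the two-scale estimate is more than your own bookkeeping needs. The conversion $w(B(x_Q,2^{-j}))^{-1}\lesssim 2^{m\Qdim}w(B^d(x,2^{m-j+1}))^{-1}$ takes place inside the $r$-th power sum. It therefore contributes only $2^{m\Qdim/r}$ after taking the $1/r$ root, so $M>\Qdim/r$ already suffices there, exactly as in the first estimate. This is harmless given the ``$M$ sufficiently large'' hypothesis.
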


Both estimates follow directly from \cite[Lemma~2.3]{BuiDunkl}
(take $\lambda=0$, $k=j$ for the first one).

\medskip
We now prove Theorem~\ref{thm:frame-spaces}, using the operators
in \eqref{eq:analysis-synthesis-operators}.

\begin{proof}
We separate analysis, synthesis, and reconstruction.

\emph{Analysis.}
By \eqref{eq:coefficient-sampling} and $\omega_Q\simeq w(Q)$,
\[
 w(Q)^{1/p-1/2}|\ip{f}{\psi_Q}|
 \simeq w(Q)^{1/p}|P_jf(x_Q)|,
 \qquad Q\in\Dcal_{j+J}.
\]
The scalar sampling estimate \eqref{eq:sampling-lp} gives
\[
 \Big(\sum_{Q\in\Dcal_{j+J}}
 [w(Q)^{1/p-1/2}|\ip{f}{\psi_Q}|]^p\Big)^{1/p}
 \simeq\|P_jf\|_{L^p(dw)},
\]
with the usual change for $p=\infty$. Taking the weighted $\ell^q$ norm
proves \eqref{eq:B-frame-equivalence}.

For the Triebel--Lizorkin case, put
\[
 \mathcal S_j h:=\sum_{Q\in\Dcal_{j+J}}h(x_Q)\1_Q.
\]
Since $\Dcal_{j+J}$ partitions $\R^N$, the coefficient identity implies
\[
 \|S_\Psi f\|_{\dot f^s_{p,q}}
 \simeq\left\|\left(\sum_j[2^{js}|\mathcal S_j(P_jf)|]^q
 \right)^{1/q}\right\|_{L^p(dw)}.
\]
The vector-valued sampling estimate \eqref{eq:sampling-vector}, applied to
$h_j=2^{js}P_jf$, then gives
\[
 \|S_\Psi f\|_{\dot f^s_{p,q}}
 \simeq\left\|\left(\sum_j[2^{js}|P_jf|]^q\right)^{1/q}
 \right\|_{L^p(dw)}
 =\|f\|_{\dot F^s_{p,q}(dw)}.
\]
The supremum conventions apply when $q=\infty$.

\emph{Synthesis.}
We first take a finitely supported sequence $a=\{a_Q\}_Q$, and put
\[
 g_j:=\sum_{Q\in\Dcal_{j+J}}w(Q)^{-1/2}|a_Q|\1_Q.
\]
The annular supports imply $P_k\psi_Q=0$ for $|k-j|>3$.
For $|k-j|\leq3$, the kernel estimate gives, for any sufficiently large $H$,
\[
 |P_k\psi_Q(x)|\lesssim_H
 w(Q)^{-1/2}(1+2^jd(x,x_Q))^{-H}.
\]
By Lemma~\ref{lem:discrete}, for $0<r\leq1$ and $H>\Qdim/r$,
\begin{equation}\label{eq:synthesis-point}
 |P_kT_\Psi a(x)|\lesssim\sum_{|j-k|\leq3}\MO_r(g_j)(x).
\end{equation}
Let $0<r<\min\{1,p\}$ if $p<\infty$, or $0<r<1$ if $p=\infty$.
The scalar maximal inequality and finite overlap in $j$ yield
\[
 \|T_\Psi a\|_{\dot B^s_{p,q}(dw)}
 \lesssim\|a\|_{\dot b^s_{p,q}},
\]
where $\|g_j\|_p$ agrees with the corresponding coefficient $\ell^p$
quasi-norm at scale $j$. For the Triebel--Lizorkin spaces, choose
$0<r<\min\{1,p,q\}$ (or $0<r<\min\{1,p\}$ when $q=\infty$).
Applying the vector-valued Fefferman--Stein inequality to
\eqref{eq:synthesis-point} gives
\[
 \|T_\Psi a\|_{\dot F^s_{p,q}(dw)}
 \lesssim\|a\|_{\dot f^s_{p,q}}.
\]

For general sequences, the coefficient bound
$|a_Q|\lesssim 2^{-js}w(Q)^{1/2-1/p}\|a\|$ and the rapid decay of
$\ip{\psi_Q}{\varphi}$ for $\varphi\in\Scirc$ show that the series
in \eqref{eq:analysis-synthesis-operators} defines an element of $\Scirc'$.
The preceding bounds extend by finite truncation and Fatou's lemma.

\emph{Reconstruction.}
Theorem~\ref{thm:frame} gives $T_\Psi S_\Psi=I$ in $L^2(dw)$.
For $f\in\Scirc'$, the rapid scale and spatial decay of the frame
coefficients of each $\varphi\in\Scirc$ allow passage to duality:
\[
 \ip{T_\Psi S_\Psi f}{\varphi}
 =\sum_{j,Q}\ip{f}{\psi_Q}\ip{\psi_Q}{\varphi}
 =\ip{f}{\varphi}.
\]
The last equality follows from the $L^2$ frame identity for test
functions, whose frame expansions converge in $\Scirc$.
Hence $T_\Psi S_\Psi=I$ in $\Scirc'$ and the norm equivalences follow.
The synthesis series also converges in the function-space quasi-norm
when finite sequences are dense; at the remaining endpoints we use
convergence in $\Scirc'$.

This completes our proof.
\end{proof}

\section{Atomic decomposition}
We prove Theorem~\ref{thm:atomic-characterization} using differential and operator atoms.

For a Euclidean cube $Q$, write
\[
3\mathcal O(Q):=\mathcal O(3Q)
=\bigcup_{\sigma\in G}\sigma(3Q).
\]
By Lemma~\ref{lem:euclidean-to-dunkl} and
\eqref{eq:dunkl-derivative-support}, every differential
atom $a$ associated with $Q$ satisfies the corresponding
Dunkl derivative estimates and
\begin{equation}\label{eq:atom-dunkl-support}
	\supp T^\beta a\subset 3\mathcal O(Q),
	\qquad |\beta|\ge 1.
\end{equation}

For a scalar family $s=\{s_Q\}_{Q\in\Dcal}$ define
\begin{equation}\label{eq:atomic-b-sequence}
 \|s\|_{\dot b^{s,\mathrm{at}}_{p,q}}
 :=\Big\{\sum_{\nu\in\Z}2^{\nu sq}
 \Big(\sum_{Q\in\Dcal_\nu}|s_Q|^p\Big)^{q/p}\Big\}^{1/q},
\end{equation}
for $0<p,q\leq\infty$, and
\begin{equation}\label{eq:atomic-f-sequence}
 \|s\|_{\dot f^{s,\mathrm{at}}_{p,q}}
 :=\Big\|\Big\{\sum_{\nu\in\Z}
 \Big[2^{\nu s}\sum_{Q\in\Dcal_\nu}
 |s_Q|w(Q)^{-1/p}\1_Q\Big]^q\Big\}^{1/q}\Big\|_{L^p(dw)},
\end{equation}
for $0<p<\infty$ and $0<q\leq\infty$. The usual changes are made when an
index is infinite.

The two sequence normalizations agree when $c_Q=w(Q)^{1/2-1/p}s_Q$,
up to the fixed factor $2^{Js}$ arising from $\nu=j+J$.

We first estimate $P_ja_Q$ at high and low frequencies.

\begin{lemma}
\label{lem:dunkl-atom-two-scale}
Let $a_Q$ be a $(p;M,L_0)$-Dunkl differential atom associated with
$Q\in\Dcal_\nu$, and put $r=\ell(Q)=2^{-\nu}$.  For every $A>0$ there is a
constant $C_A$, independent of $Q,\nu,j$, such that the following estimates
hold.

If $j\geq\nu$, then
\begin{equation}\label{eq:dunkl-atom-high}
 |P_ja_Q(x)|
 \leq C_A w(Q)^{-1/p}2^{-2M(j-\nu)}
 (1+2^\nu d(x,x_Q))^{-A}.
\end{equation}
If $j<\nu$, then
\begin{equation}\label{eq:dunkl-atom-low}
 |P_ja_Q(x)|
 \leq C_A w(Q)^{-1/p}2^{-(\nu-j)(L_0+1)}
 \frac{w(Q)}{w(B(x_Q,2^{-j}))}
 (1+2^jd(x,x_Q))^{-A}.
\end{equation}

\end{lemma}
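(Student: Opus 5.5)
The plan is to treat the two regimes separately: for $j\ge\nu$ move smoothness of the atom onto the multiplier, and for $j<\nu$ Taylor expand the kernel and use the moments of the atom. Throughout, $x_Q$ denotes a point of $Q$ (say the center), and $P_ja_Q(x)=\int K_j(x,y)a_Q(y)\,dw(y)$ with $y\in 3Q$.

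\emph{High frequencies, $j\ge\nu$.} Write $\Phi(2^{-j}|\xi|)=(2^{-j}|\xi|)^{2M}\widetilde\Phi(2^{-j}|\xi|)$ with $\widetilde\Phi(u)=u^{-2M}\Phi(u)$. This function is again a smooth annular cutoff, so Proposition~\ref{prop:kernel} applies to it. Since $\F(Lf)=|\xi|^2\F f$, we get
\[
 P_ja_Q=2^{-2Mj}\,T_{\widetilde\Phi(2^{-j}|\cdot|)}(L^Ma_Q).
\]
The operator $L^M$ is a sum of products $T^\beta$ with $|\beta|=2M$. By Lemma~\ref{lem:euclidean-to-dunkl} and \eqref{eq:atom-dunkl-support}, $L^Ma_Q$ is supported in $\mathcal O(3Q)$ and satisfies $\|L^Ma_Q\|_\infty\lesssim 2^{2M\nu}w(Q)^{-1/p}$.

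Integrating the kernel bound \eqref{eq:kernel-size} at scale $t=2^{-j}$ over $\mathcal O(3Q)$ gives
\[
 |P_ja_Q(x)|\lesssim 2^{-2M(j-\nu)}w(Q)^{-1/p}\int_{\mathcal O(3Q)}\frac{(1+2^jd(x,y))^{-A'}}{w(B(x,2^{-j}+d(x,y)))}\,dw(y).
\]
Each point $y\in\mathcal O(3Q)$ satisfies $d(y,x_Q)\lesssim 2^{-\nu}$. Therefore $(1+2^jd(x,y))^{-A'}\lesssim(1+2^\nu d(x,x_Q))^{-A'}$ whenever $2^{-\nu}\ge 2^{-j}$. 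The remaining integral is $\lesssim 1$ by \eqref{eq-elementary estimate 2}. This gives \eqref{eq:dunkl-atom-high}.

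\emph{Low frequencies, $j<\nu$.} Here the moment conditions are used, and the kernel must be expanded in ordinary Euclidean coordinates, because the atom is supported in the Euclidean cube $3Q$. Subtract from $y\mapsto K_j(x,y)$ its Taylor polynomial of degree $L_0$ at $x_Q$. By \eqref{eq:atom-moments} the polynomial integrates to zero against $a_Q\,dw$. The Taylor remainder is bounded by
\[
 |y-x_Q|^{L_0+1}\sup_{z\in 3Q}\ \sup_{|\beta|=L_0+1}|\partial_z^\beta K_j(x,z)|,
\]
and the segment from $x_Q$ to $y$ lies in the convex set $3Q$. By \eqref{eq:kernel-euclidean-derivatives} in the second variable, this is
\[
 \lesssim (2^{j-\nu})^{L_0+1}\sup_{z\in3Q}\frac{(1+2^jd(x,z))^{-A}}{w(B(x,2^{-j}+d(x,z)))}.
\]
Because $|z-x_Q|\lesssim 2^{-\nu}<2^{-j}$, we may replace $z$ by $x_Q$ in both the decay factor and the volume factor. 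We then need the comparison
\[
 w(B(x,2^{-j}+d(x,x_Q)))\gtrsim w(B(x_Q,2^{-j}))\,(1+2^jd(x,x_Q))^{-\Qdim}.
\]
Doubling gives this with the Euclidean distance; $G$-invariance of $w$ and $w(B^d)\simeq w(B)$ give it with $d$. The lost power is absorbed by increasing $A$.

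Integrating $|a_Q|\le w(Q)^{-1/p}\1_{3Q}$ then produces the factor $w(3Q)\simeq w(Q)$. Combined with the bounds above, this yields \eqref{eq:dunkl-atom-low}.

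\emph{Main obstacle.} The delicate point is the interaction between orbit and Euclidean geometry. The Taylor expansion must use Euclidean derivatives on the Euclidean cube, since only ordinary moments are available; Dunkl moments would not interact with $T$. The decay, however, is in $d$, so one must check that replacing $z\in3Q$ by $x_Q$ is legitimate in $d$ and in the volume factor. This holds because $d(x,z)\le d(x,x_Q)+|z-x_Q|$.

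In the high-frequency case the corresponding difficulty is that $L^Ma_Q$ is supported on the whole orbit $\mathcal O(3Q)$ rather than on $3Q$. This is harmless, since the conclusion is stated with $d(x,x_Q)$.
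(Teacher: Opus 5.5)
Your proposal is correct and follows essentially the paper's proof. For $j\ge\nu$ you use the factorization $P_ja_Q=2^{-2Mj}\widetilde P_jL^Ma_Q$ with $\widetilde\Phi(u)=u^{-2M}\Phi(u)$. For $j<\nu$ you subtract the Euclidean Taylor polynomial of $y\mapsto K_j(x,y)$ at $x_Q$ and use the ordinary moments. Your triangle-inequality argument in the high-frequency case replaces the paper's split into $d(x,x_Q)\lesssim r$ and $d(x,x_Q)\gg r$, and the two are equivalent.

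One slip needs fixing. When you pull $(1+2^\nu d(x,x_Q))^{-A'}$ out of the integral over $\mathcal O(3Q)$, you must keep a factor $(1+2^jd(x,y))^{-H}$ with $H>\Qdim$ inside, i.e. split the exponent. With no decay left, that integral is not uniformly bounded (for $x\in Q$ it is of order $1+j-\nu$), so \eqref{eq-elementary estimate 2} does not apply as you wrote it.
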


\begin{proof}
For $j\geq\nu$, set $t=2^{-j}\leq r$ and
\[
 \widetilde\Phi(\lambda)=\lambda^{-2M}\Phi(\lambda),
 \qquad \lambda>0.
\]
Since $\Phi$ is supported away from the origin,
$\widetilde\Phi\in C_c^\infty((0,\infty))$.  If $\widetilde P_j$ denotes the
Dunkl multiplier with symbol $\widetilde\Phi(2^{-j}|\xi|)$, then
\begin{equation}\label{eq:atom-high-factorization}
 P_ja_Q=t^{2M}\widetilde P_jL^Ma_Q,
 \qquad L=-\sum_{\ell=1}^NT_\ell^2.
\end{equation}
The operator $L^M$ is a finite linear combination of $T^\beta$ with
$|\beta|=2M$.  Definition~\ref{def:dunkl-atom} therefore gives
\begin{equation}\label{eq:LM-atom}
 |L^Ma_Q(y)|\lesssim r^{-2M}w(Q)^{-1/p},
 \qquad
 \supp L^Ma_Q\subset 3\mathcal O(Q).
\end{equation}
For $y\in3\mathcal O(Q)$,
$d(y,x_Q)\lesssim r$ and
$w(3\mathcal O(Q))\lesssim w(Q)$, by $G$-invariance and finiteness of $G$.

For $d(x,x_Q)\lesssim r$, Proposition~\ref{prop:kernel} and
\eqref{eq:atom-high-factorization}--\eqref{eq:LM-atom} give
\[
 |P_ja_Q(x)|\lesssim(t/r)^{2M}w(Q)^{-1/p}.
\]
If $R=d(x,x_Q)\gg r$, then $d(x,y)\simeq R$ for
$y\in 3\mathcal O(Q)$.  Doubling and sufficiently large kernel decay give
\[
 |P_ja_Q(x)|\lesssim(t/r)^{2M}w(Q)^{-1/p}
 (1+R/r)^{-A},
\]
which is \eqref{eq:dunkl-atom-high}.

For $j<\nu$, put $t=2^{-j}>r$. Let
$K_j(x,y)$ be the kernel of $P_j$.  For fixed $x$, let
$\mathcal T^{L_0}_{x,Q}(y)$ be the ordinary Euclidean Taylor polynomial of
$y\mapsto K_j(x,y)$ at $x_Q$ of total degree $L_0$.  Since
$\supp a_Q\subset 3Q$ and $a_Q$ has the moments
\eqref{eq:atom-moments},
\begin{equation}\label{eq:atom-taylor-subtraction}
 P_ja_Q(x)=\int_{3Q}
 [K_j(x,y)-\mathcal T^{L_0}_{x,Q}(y)]a_Q(y)\,dw(y).
\end{equation}
By \eqref{eq:kernel-euclidean-derivatives}, Taylor's formula, and
$|y-x_Q|\lesssim r$ on $3Q$,
\[
 |K_j(x,y)-\mathcal T^{L_0}_{x,Q}(y)|
 \lesssim
 (r/t)^{L_0+1}\frac1{w(B(x_Q,t))}
 (1+d(x,x_Q)/t)^{-A},
\]
after increasing the kernel decay exponent and using doubling to move the
volume from $y$ to $x_Q$.  Since
$|a_Q|\leq w(Q)^{-1/p}$ and $w(3Q)\lesssim w(Q)$,
\eqref{eq:atom-taylor-subtraction} gives \eqref{eq:dunkl-atom-low}.

This completes our proof.
\end{proof}

\begin{proof}[Proof of the first implication in Theorem~\ref{thm:atomic-characterization}]
Let $f=\sum_{\nu,Q}s_Qa_Q$ and put
\[
 H_\nu=\sum_{Q\in\Dcal_\nu}|s_Q|w(Q)^{-1/p}\1_Q.
\]
Choose $0<r_0<p\wedge q\wedge1$ for Triebel--Lizorkin, or
$0<r_0<p\wedge1$ for Besov, close to the upper endpoint.
By Lemmas~\ref{lem:dunkl-atom-two-scale} and~\ref{lem:discrete},
\begin{align}
 2^{js}|P_jf(x)|
 &\lesssim\sum_{\nu\le j}2^{-(j-\nu)(2M-s)}
 2^{\nu s}\MO_{r_0}H_\nu(x)\\
 &\quad+\sum_{\nu>j}2^{-(\nu-j)(L_0+1+\Qdim+s-\Qdim/r_0)}
 2^{\nu s}\MO_{r_0}H_\nu(x).
 \label{eq:atomic-synthesis-direct}
\end{align}
The hypotheses make both scale kernels summable. Discrete convolution
and the maximal inequalities give
\begin{equation}\label{eq:atom-synthesis-B}
 \|f\|_{\dot B^s_{p,q}(dw)}\lesssim\|s\|_{\dot b^{s,\mathrm{at}}_{p,q}},
\end{equation}
\begin{equation}\label{eq:atom-synthesis-F}
 \|f\|_{\dot F^s_{p,q}(dw)}\lesssim\|s\|_{\dot f^{s,\mathrm{at}}_{p,q}}.
\end{equation}
The distributional convergence follows by applying these estimates to
finite partial sums and pairing with test functions in $\Scirc$; see also
\cite{BuiDunkl, FrazierJawerth} for the standard argument.

This completes our proof.
\end{proof}

\bigskip

For the reverse inclusion we first construct smooth $L$-atoms, then
decompose them into differential atoms.

\begin{definition}[Smooth operator atom]\label{def:smooth-L-atom}
Fix $0<p\le\infty$, integers $K\ge1$ and $M\ge1$. A function
$A_Q$ is called a smooth $(L,K,p;2M)$-atom associated with
$Q\in\Dcal_\nu$ if there is $b_Q\in\mathcal D(L^K)$ such that
\begin{enumerate}[label=\textup{(\roman*)}]
\item $A_Q=L^Kb_Q$;
\item for $0\le m\le K$,
\[
 \supp L^mb_Q\subset 3\mathcal O(Q),
 \qquad
 |L^mb_Q(x)|\lesssim \ell(Q)^{2(K-m)}w(Q)^{-1/p};
\]
\item for every $|\alpha|\le2M$,
\begin{equation}\label{eq:smooth-L-atom-derivative}
 |\partial^\alpha A_Q(x)|
 \lesssim \ell(Q)^{-|\alpha|}w(Q)^{-1/p}.
\end{equation}
\end{enumerate}
The implicit constants are independent of $Q$.
\end{definition}

Unlike \cite[Definition~3.14]{BuiDunkl}, we use Euclidean dyadic cubes
and impose \eqref{eq:smooth-L-atom-derivative}.

\begin{lemma}\label{lem:smooth-L-atomic-decomp}
Let $s\in\R$, $M\in\Nn$, $0<p,q\le\infty$, and let $K$ be chosen sufficiently large. The Besov spaces
$\dot B^s_{p,q}(dw)$, and the Triebel--Lizorkin spaces
$\dot F^s_{p,q}(dw)$ when $p<\infty$, admit decompositions into
smooth $(L,K,p;2M)$-atoms with the same coefficient sequence norms as in
\eqref{eq:atomic-b-sequence} and \eqref{eq:atomic-f-sequence}. More precisely,
for $f\in\dot B^s_{p,q}(dw)$ there are coefficients $u_Q$ and smooth
operator atoms $A_Q$ such that
\[
 f=\sum_{\nu\in\Z}\sum_{Q\in\Dcal_\nu}u_QA_Q
 \quad\hbox{in }\Scirc',
 \qquad
 \|u\|_{\dot b^{s,\mathrm{at}}_{p,q}}
 \lesssim \|f\|_{\dot B^s_{p,q}(dw)}.
\]
For $f\in\dot F^s_{p,q}(dw)$ the analogous representation holds with
\begin{equation}\label{eq:L-atomic-expansion-F}
 \|u\|_{\dot f^{s,\mathrm{at}}_{p,q}}
 \lesssim \|f\|_{\dot F^s_{p,q}(dw)}.
\end{equation}
\end{lemma}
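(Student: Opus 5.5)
The plan is to follow the Calderón reproducing formula method of \cite[Theorems~4.2 and~4.6]{BuiBuiDuongFMS}, with finite propagation speed for the Dunkl wave equation supplying the support condition. I would fix an even function $\phi\in C_c^\infty(\R)$ with $\supp\phi\subset[-1,1]$ and $\int\phi=1$, and set $\Phi_0=\widehat\phi$. Then, for $t>0$, the kernel of $\Phi_0(t\sqrt L)$ is supported in $\{(x,y):d(x,y)\le t\}$; this is finite speed of propagation, which is known for $\Delta_\kappa$ with respect to the orbit distance. Next I put $\Psi_K(\lambda)=\lambda^{2K}\Phi_0(\lambda)$. After rescaling, and since $\Psi_K$ does not vanish on a small annulus, there is a Calderón formula
\[
 f=c\int_0^\infty \Psi_K(t\sqrt L)\Phi_0(t\sqrt L)P_{j(t)}'f\,\frac{dt}{t}
\]
in $\Scirc'$, where $P'_{j}$ is an auxiliary annular Littlewood--Paley operator adapted to the cutoff. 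Equivalently, I can work with a discrete version $f=\sum_\nu\int_{2^{-\nu-1}}^{2^{-\nu}}\cdots$. Convergence in $\Scirc'$ follows from the polynomial-moment structure of $\Scirc$ together with the fact that every factor vanishes to infinite order at $0$.

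For each $\nu$ and each $Q\in\Dcal_\nu$, I would localize by writing $F_\nu(y)=$ the scale-$\nu$ piece $\int_{2^{-\nu-1}}^{2^{-\nu}}\Phi_0(t\sqrt L)P'_{j(t)}f\,dt/t$ and splitting it as $F_\nu\1_Q$. I then set
\[
 b_Q=u_Q^{-1}\int_{2^{-\nu-1}}^{2^{-\nu}}t^{2K}\Phi_0(t\sqrt L)\big[\1_Q\,\Phi_0(t\sqrt L)P'f\big]\frac{dt}{t},\qquad A_Q=L^Kb_Q,
\]
with
\[
 u_Q=w(Q)^{1/p}\sup_{y\in Q,\ t\sim 2^{-\nu}}|\Phi_0(t\sqrt L)P'f(y)|.
\]
Finite propagation gives $\supp L^mb_Q\subset\mathcal O(B(x_Q,\ell(Q)+2^{-\nu}))$. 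To get $3\mathcal O(Q)$ instead, I would shrink the support of $\phi$ to $[-\tfrac12,\tfrac12]$ (and divide by $\sqrt N$). The size bound $|L^mb_Q|\lesssim \ell(Q)^{2(K-m)}w(Q)^{-1/p}$ then comes from Proposition~\ref{prop:kernel} applied to $\lambda^{2m}\Phi_0(\lambda)$, which is even Schwartz, together with $w(Q)\simeq w(B(x_Q,\ell(Q)))$.

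The derivative bound \eqref{eq:smooth-L-atom-derivative} is the new point compared with \cite{BuiDunkl}. Here $A_Q=u_Q^{-1}\int t^{-2K}\cdot t^{2K}L^K\Phi_0(t\sqrt L)[\1_Q g_t]\,dt/t$, so I would differentiate the kernel of $\Psi_K(t\sqrt L)$ in $x$ using \eqref{eq:kernel-euclidean-derivatives}. This yields $|\partial^\alpha A_Q(x)|\lesssim t^{-|\alpha|}w(Q)^{-1/p}$ after integrating the kernel bound against $\1_Q$ via \eqref{eq-elementary estimate 2}. Because $\ell(Q)\simeq t$, this is the required estimate.

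For the coefficient bounds I would use the Peetre maximal function. For $y\in Q$ and $t\sim2^{-\nu}$, I have $u_Qw(Q)^{-1/p}\1_Q\lesssim (\Phi_0(t\sqrt L)P'f)^*_{2^{-\nu},a}$, and I would control this, as in the proof of Lemma~\ref{lem:peetre}, by maximal functions of $P_jf$ with $|j-\nu|\le 2$. Lemma~\ref{lem:peetre} and the Fefferman--Stein inequality then give \eqref{eq:L-atomic-expansion-F}, and the Besov bound comes from the scalar inequality. The distributional convergence is obtained as in the synthesis part of Theorem~\ref{thm:frame-spaces}.

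\textbf{Main obstacle.} The step I expect to be hardest is the uniform Euclidean derivative estimate \eqref{eq:smooth-L-atom-derivative}, since the kernel estimates mix $d$ and $|\cdot|$. I would try the first-variable version of \eqref{eq:kernel-euclidean-derivatives}, with the $t^{-|\alpha|}$ factor, directly.
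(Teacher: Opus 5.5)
Your proposal is correct and follows essentially the paper's route: a Calder\'on reproducing formula with a finite-propagation factor $\widehat\varphi$, $b_Q$ built from $t^{2K}\widehat\varphi(t\sqrt L)[\1_Q\,\cdot\,]$ over $t\in[2^{-\nu-1},2^{-\nu}]$, supports from finite propagation, the bound \eqref{eq:smooth-L-atom-derivative} taken directly from \eqref{eq:kernel-euclidean-derivatives}, and coefficients controlled by Peetre maximal functions of nearby $P_jf$. The paper places the annular localization inside as $\psi_K(t\sqrt L)f$ with $\psi\in C_c^\infty((0,\infty))$, so it needs neither your auxiliary $P'_{j(t)}$ nor any shrinking of $\supp\phi$, because $y\in Q$ and $d(x,y)\le t\le\ell(Q)$ already give $x\in\mathcal O(3Q)$; note also that it is the annular factor, not every factor, that vanishes near $0$.
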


\begin{proof}
We follow \cite[Theorems~4.2 and~4.6]{BuiBuiDuongFMS}, using Euclidean cubes.

Choose an even function $\varphi\in C_c^\infty(\mathbb R)$
such that
\[
\supp\varphi\subset(-1,1),
\qquad \int_{\mathbb R}\varphi(u)\,du=2\pi,
\]
and denote by $\Theta$ the Fourier transform of $\varphi$. Then $\Theta$ is an even Schwartz function with
$\Theta(0)=1$. Choose
$\psi\in C_c^\infty((0,\infty))$ supported in an
annulus such that
\[
c_{\psi,\Theta}
:=\int_0^\infty\psi(u)\Theta(u)\,\frac{du}{u}
\ne0,
\qquad
\psi_K(u)=u^{-2K}\psi(u).
\]
The Calder\'on reproducing formula gives
\begin{equation}\label{eq:app-calderon}
	f=c_{\psi,\Theta}^{-1}
	\int_0^\infty
	\psi(t\sqrt L)\Theta(t\sqrt L)f\,\frac{dt}{t}
\end{equation}
in $\Scirc'$.

For $Q\in\Dcal_\nu$, put $\ell(Q)=2^{-\nu}$,
$I_\nu=[2^{-\nu-1},2^{-\nu}]$, and
\[
H_\nu(y)=\int_{I_\nu}
|\psi_K(t\sqrt L)f(y)|\,\frac{dt}{t},
\qquad
u_Q=w(Q)^{1/p}\sup_{y\in Q}H_\nu(y).
\]
When $u_Q\ne0$, set
\begin{equation}\label{eq:app-bQ}
	b_Q=\frac{c_{\psi,\Theta}^{-1}}{u_Q}
	\int_{I_\nu}t^{2K}\Theta(t\sqrt L)
	\big[\1_Q\psi_K(t\sqrt L)f\big]\,\frac{dt}{t},
	\qquad A_Q=L^Kb_Q.
\end{equation}
Omit $u_Q=0$. Since
\[
t^{2K}L^K\psi_K(t\sqrt L)=\psi(t\sqrt L),
\]
formula \eqref{eq:app-calderon} and the
dyadic cube partition yield
\[
f=\sum_{\nu\in\mathbb Z}
\sum_{Q\in\Dcal_\nu}u_QA_Q
\quad\text{in }\Scirc'.
\]

Put $\Theta_m(u)=u^{2m}\Theta(u)$. Since
$\operatorname{supp}\widehat{\Theta_m}\subset(-1,1)$, finite propagation
\cite[Lemma~2.6]{BuiDunkl} gives
$K_{\Theta_m(t\sqrt L)}(x,y)=0$ when $d(x,y)>t$.

Since $t\in I_\nu$ implies $t\le\ell(Q)$,
\[
\supp L^mb_Q\subset 3\mathcal O(Q),
\qquad 0\le m\le K.
\]

The kernel estimates and doubling give
\[
 |L^mb_Q(x)|\lesssim
 \ell(Q)^{2(K-m)}w(Q)^{-1/p},\qquad 0\le m\le K.
\]
Also, Proposition~\ref{prop:kernel} gives, for $|\alpha|\le2M$,
\[
 |\partial_x^\alpha K_{(t^2L)^K\Theta(t\sqrt L)}(x,y)|
 \lesssim t^{-|\alpha|}w(B(y,t))^{-1}
 \1_{\{d(x,y)\le ct\}}.
\]
Applied to \eqref{eq:app-bQ}, this yields
$|\partial^\alpha A_Q(x)|\lesssim
\ell(Q)^{-|\alpha|}w(Q)^{-1/p}$.

Since $d(x,y)\lesssim\ell(Q)$ for $x,y\in Q$, we have
\[
 \sum_{Q\in\Dcal_\nu}|u_Q|w(Q)^{-1/p}\1_Q(x)
 \lesssim H_{\nu,a}^*(x),
 \quad
 H_{\nu,a}^*(x)=\sup_y
 \frac{H_\nu(y)}{(1+2^\nu d(x,y))^a}.
\]
As in \cite[Theorems~4.2 and~4.6]{BuiBuiDuongFMS}, for $0<r<\min\{1,p,q\}$ (or $r<\min\{1,p\}$ when $q=\infty$),
\begin{equation}\label{eq:app-peetre-comparison}
 H_{\nu,a}^*(x)\lesssim
 \sum_{|j-\nu|\le C_0}\MO_r(P_jf)(x),\qquad a>\Qdim/r.
\end{equation}
Maximal inequalities give
\[
 \|u\|_{\dot b^{s,\mathrm{at}}_{p,q}}
 \lesssim\|f\|_{\dot B^s_{p,q}(dw)},\qquad
 \|u\|_{\dot f^{s,\mathrm{at}}_{p,q}}
 \lesssim\|f\|_{\dot F^s_{p,q}(dw)}.
\]
The scale summation follows as in
\cite[Theorems~4.2 and~4.6]{BuiBuiDuongFMS}.

This completes our proof.
\end{proof}

\begin{lemma}
\label{lem:smooth-moment-correction}
Let $\ell_0,K\in\mathbb N_0$ be arbitrary. Write $\mathcal P_{\ell_0}$
for the polynomials on $\R^N$ of  degree at most $\ell_0$.
Let $U=B(x_0,R)$ be a Euclidean ball, and let
$\Lambda$ be a linear functional on $\mathcal P_{\ell_0}$ such that
\begin{equation}\label{eq:moment-functional-bound}
 |\Lambda(P)|\leq A\,w(U)^{-1/2}\|P\|_{L^2(U,dw)},
 \qquad P\in\mathcal P_{\ell_0}.
\end{equation}
Then there exists $q\in C_c^\infty(2U)$ such that
\begin{equation}\label{eq:moment-corrector-identity}
 \int_{\R^N}q(x)P(x)\,dw(x)=\Lambda(P),
 \qquad P\in\mathcal P_{\ell_0},
\end{equation}
and, for $|\alpha|\leq K$,
\begin{equation}\label{eq:smooth-corrector-bound}
 |\partial^\alpha q(x)|
 \leq C_{\ell_0,K}A R^{-|\alpha|}w(U)^{-1},
 \qquad x\in\R^N.
\end{equation}
The constant is independent of $U$, $A$, and $\Lambda$.
\end{lemma}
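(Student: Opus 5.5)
We construct $q$ as a linear combination of a fixed smooth bump against a dual basis of polynomials, after rescaling $U$ to the unit ball. Translation and dilation move the measure $dw$ off its homogeneous form, so we never rescale the weight. Instead we work directly on $U$ and control constants through a uniform comparison of norms on the finite-dimensional space $\mathcal P_{\ell_0}$.

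Fix a bump $\eta\in C_c^\infty(B(0,2))$ with $0\le\eta\le1$ and $\eta=1$ on $B(0,1)$, and put $\eta_U(x)=\eta((x-x_0)/R)$. On $\mathcal P_{\ell_0}$ consider the inner product
\[
 \langle P_1,P_2\rangle_U:=w(U)^{-1}\int P_1\overline{P_2}\,\eta_U\,dw .
\]
Since $\1_U\le\eta_U\le\1_{2U}$, this is positive definite. The key uniform estimate is
\[
 w(U)^{-1}\int_{2U}|P|^2dw\lesssim w(U)^{-1}\int_U|P|^2dw,
 \qquad P\in\mathcal P_{\ell_0},
\]
with constants depending only on $\ell_0,N,\kappa$. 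We also need the pointwise bound $\sup_{2U}|P|\lesssim \big(w(U)^{-1}\int_U|P|^2dw\big)^{1/2}$.

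To prove both estimates, we use that the density $\prod_{\nu}|\langle x,\nu\rangle|^{\kappa(\nu)}$ is a product of powers of affine functions. On the rescaled ball, with $P(x)=\tilde P((x-x_0)/R)$, the weight becomes $R^N\prod_\nu|\langle x_0,\nu\rangle+R\langle u,\nu\rangle|^{\kappa(\nu)}$. Each factor, normalized by its value $\simeq(|\langle x_0,\nu\rangle|+R)^{\kappa(\nu)}$ (cf.\ \eqref{eq:volume}), belongs to the family $\{|a+\langle u,\nu\rangle|^{k}/(|a|+1)^k\}$. Polynomial Remez/Nikolskii-type inequalities hold uniformly over such weights. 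The cleanest route is a compactness argument: the normalized weights range over a family that is precompact in $L^1(B(0,2))$, including the limit $|a|\to\infty$, which gives a constant weight. Each limit weight is nonzero almost everywhere, so the finite-dimensional norms remain uniformly equivalent. This uniformity is the main obstacle, and I expect to handle it exactly by this compactness argument in the parameter $a/(|a|+1)\in[-1,1]$ for each root, together with the direction of the root.

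Granting the equivalence, take an orthonormal basis $\{P_i\}$ of $(\mathcal P_{\ell_0},\langle\cdot,\cdot\rangle_U)$ and define
\[
 q(x):=w(U)^{-1}\eta_U(x)\sum_i\overline{\Lambda(\overline{P_i})}\,\overline{P_i(x)}
\]
(adjusting conjugations so that $\int qP_k\,dw=\Lambda(P_k)$ by orthonormality). Linearity then gives \eqref{eq:moment-corrector-identity}, and $\supp q\subset 2U$. For the bounds, note that $\|P_i\|_{L^2(U,dw)}\lesssim w(U)^{1/2}$, so \eqref{eq:moment-functional-bound} yields $|\Lambda(P_i)|\lesssim A$.

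It remains to bound derivatives. Write $P_i(x)=\tilde P_i((x-x_0)/R)$, where $\tilde P_i$ has bounded sup norm on $B(0,2)$ by the pointwise equivalence. By the equivalence of norms on polynomials of bounded degree on $B(0,2)$, all coefficients of $\tilde P_i$ are bounded, so $|\partial^\alpha P_i|\lesssim R^{-|\alpha|}$ on $2U$. Likewise $|\partial^\alpha\eta_U|\lesssim R^{-|\alpha|}$. The Leibniz rule then gives \eqref{eq:smooth-corrector-bound}. The number of basis elements is $\dim\mathcal P_{\ell_0}$, so all constants depend only on $\ell_0,K,N,\kappa$.
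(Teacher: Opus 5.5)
Your proposal is correct. Its skeleton matches the paper's: a bump $\eta_U$ times a dual basis of $\mathcal P_{\ell_0}$, with the derivative bounds coming from Leibniz and the uniform estimate $\sup_{2U}|\partial^\alpha P|\lesssim R^{-|\alpha|}w(U)^{-1/2}\|P\|_{L^2(U,dw)}$. Two things differ.

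\emph{The dual basis.} The paper orthonormalizes in $L^2(U,dw)$ and then inverts the Gram matrix $G_{mn}=\int\eta_Up_mp_n\,dw$, using $G\ge I$. You orthonormalize directly in the $\eta_U$-weighted inner product, so the coefficients are simply $\Lambda(P_i)$; this is slightly more streamlined. The right formula is $q=w(U)^{-1}\eta_U\sum_i\Lambda(P_i)\overline{P_i}$. As written, your coefficients $\overline{\Lambda(\overline{P_i})}$ produce $\overline{\Lambda(\overline{P_k})}$ rather than $\Lambda(P_k)$, unless you use a real basis and real coefficients.

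\emph{The uniform polynomial estimate.} This is the substantive difference. The paper uses only doubling: Markov's inequality gives a ball $B(v',\delta R)\subset U$ on which $|P|\ge c\sup_U|P|$, and doubling gives $w(B(v',\delta R))\gtrsim w(U)$. You instead exploit the explicit product form of the Dunkl density. After $x=x_0+Ru$, the normalized weight is
$W(u)=\prod_\nu|a_\nu+\langle u,\nu\rangle|^{\kappa(\nu)}/(|a_\nu|+1)^{\kappa(\nu)}$, with $a_\nu=\langle x_0,\nu\rangle/R$. It depends continuously, uniformly on $\overline{B(0,2)}$, on $a_\nu/(|a_\nu|+1)\in[-1,1]$, and the factor tends to $1$ at the endpoints. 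The resulting compact family consists of a.e.\ positive weights. Hence $\int_{B(0,1)}|\tilde P|^2W\,du$ is bounded below uniformly on the sup-norm unit sphere of $\mathcal P_{\ell_0}$, and \eqref{eq:volume} converts this into the needed bound. The root directions form a fixed finite set, so no compactness is needed there.

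This works, but it yields a non-explicit constant and is tied to the specific form of $w$. The paper's argument is shorter and applies verbatim to any doubling measure. Finally, your first ``key estimate'' $\int_{2U}|P|^2\,dw\lesssim\int_U|P|^2\,dw$ is not needed. Positivity of the inner product only uses $\eta_U\ge\1_U$, and the proof only uses $\|P_i\|_{L^2(U,dw)}\le w(U)^{1/2}$ together with the pointwise sup bound.
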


\begin{proof}
Choose $\eta\in C_c^\infty(B(0,2))$ with $0\leq\eta\leq1$ and
$\eta=1$ on $B(0,1)$, and put
$\eta_U(x)=\eta((x-x_0)/R)$.  Let $d_{\ell_0}=\dim\mathcal P_{\ell_0}$ and choose a real
orthonormal basis $\{p_1,\ldots,p_{d_{\ell_0}}\}$ of $\mathcal P_{\ell_0}$ in
$L^2(U,dw)$.  Consider the matrix
\[
 G_{mn}=\int_{\R^N}\eta_U(x)p_m(x)p_n(x)\,dw(x).
\]
For $P_v=\sum_nv_np_n$ and $v=(v_n)$,
\[
 v^TGv=\int\eta_U|P_v|^2\,dw
 \geq\int_U|P_v|^2\,dw=|v|^2.
\]
Hence $\|G^{-1}\|\leq1$.

We use the uniform polynomial estimate
\begin{equation}\label{eq:orthonormal-polynomial-bound}
 \sup_{2U}|\partial^\alpha P|
 \lesssim_{\ell_0,K}R^{-|\alpha|}w(U)^{-1/2}
 \|P\|_{L^2(U,dw)},\qquad
 P\in\mathcal P_{\ell_0},\quad |\alpha|\le K.
\end{equation}
Indeed, scaling and finite-dimensional norm equivalence give
$\sup_{2U}|\partial^\alpha P|
\lesssim R^{-|\alpha|}\sup_U|P|$.
If $|P(v)|=\sup_U|P|$, the same polynomial norm comparison gives a
fixed $\delta>0$, depending only on $\ell_0,N$, and a ball
$B(v',\delta R)\subset U$ on which $|P|\ge c\sup_U|P|$.
Here $|v-v'|\lesssim\delta R$; a slight inward movement covers the
case $v\in\partial U$. By \eqref{eq:volume} and doubling,
$w(B(v',\delta R))\gtrsim_{\delta}w(U)$,
independently of $x_0$ and $R$. Integrating $|P|^2$ on this ball
proves \eqref{eq:orthonormal-polynomial-bound}.

Set $b_n=\Lambda(p_n)$ and let $\lambda=G^{-1}b$.  From
\eqref{eq:moment-functional-bound}, $|b_n|\leq A w(U)^{-1/2}$, hence the same
bound, up to a fixed constant, holds for every $\lambda_n$.  Define
\[
 q=\eta_U\sum_{n=1}^{d_{\ell_0}}\lambda_np_n.
\]
If $P=\sum_ma_mp_m$, then
\[
 \int qP\,dw=a^TG\lambda=a^Tb=\Lambda(P),
\]
which proves \eqref{eq:moment-corrector-identity}.  The derivative estimate
\eqref{eq:smooth-corrector-bound} follows from Leibniz' rule,
\eqref{eq:orthonormal-polynomial-bound}, and
$|\partial^\beta\eta_U|\lesssim R^{-|\beta|}$.

This completes our proof.
\end{proof}

We now pass from smooth $L$-atoms to differential atoms.

\begin{proposition}
\label{prop:Latom-to-atoms}
Fix \(0<p\leq\infty\), \(M\in\Nn\), and
\(L_0\in\mathbb N_0\).  Let \(E>0\).  Choose the auxiliary order
\(K\) sufficiently large, depending on \(E,M,L_0,p\) and the Dunkl structure.
If \(A_Q\) is a smooth \((L,K,p;2M)\)-atom associated with
\(Q\in\Dcal_\nu\), then
\begin{equation}\label{eq:Latom-differential-decomposition}
 A_Q=\sum_{\ell=0}^{L_Q}\sum_{h=1}^{N_{\ell,Q}}
 \lambda_{\ell,h,Q}a_{\ell,h,Q}
 \qquad\hbox{in }L^1(dw)\hbox{ and in }\Scirc',
\end{equation}
where \(N_{\ell,Q}\leq C_G\), each \(a_{\ell,h,Q}\) is a
\((p;M,L_0)\)-Dunkl differential atom associated with an ordinary Euclidean
dyadic cube \(R_{\ell,h,Q}\), and
\begin{equation}\label{eq:Latom-geometry}
 \ell(R_{\ell,h,Q})\simeq2^\ell\ell(Q),
 \qquad
 d(x_{R_{\ell,h,Q}},x_Q)\lesssim\ell(R_{\ell,h,Q}).
\end{equation}
Moreover,
\begin{equation}\label{eq:Latom-coefficient}
 |\lambda_{\ell,h,Q}|\lesssim2^{-\ell E},
\end{equation}
and
\begin{equation}\label{eq:Latom-weighted-coefficient}
 |\lambda_{\ell,h,Q}|w(R_{\ell,h,Q})^{-1/p}
 \lesssim2^{-\ell E}w(Q)^{-1/p}.
\end{equation}
The integer \(L_Q\) may depend on   \(x_Q\), but
all constants above are independent of \(Q\).
\end{proposition}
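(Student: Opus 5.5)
The plan is to split $A_Q$ into its pieces on the $|G|$ reflected cubes $\sigma(3Q)$ that make up $3\mathcal O(Q)$, and then restore the missing polynomial moments by a telescoping chain of correctors on Euclidean dyadic cubes of growing size. First, I cover $3\mathcal O(Q)=\bigcup_\sigma\sigma(3Q)$ by at most $C_G$ Euclidean dyadic cubes $R_{0,h,Q}$ with $\ell(R_{0,h,Q})\simeq\ell(Q)$. I choose a smooth partition of unity $\{\chi_h\}$ adapted to these cubes, with $|\partial^\alpha\chi_h|\lesssim\ell(Q)^{-|\alpha|}$, and write $A_Q=\sum_h\chi_hA_Q$. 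By \eqref{eq:smooth-L-atom-derivative} and Leibniz' rule, each $\chi_hA_Q$ already has the support and the ordinary-derivative bounds of a differential atom on $R_{0,h,Q}$, up to a constant. It lacks only the moment conditions \eqref{eq:atom-moments}.

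The global cancellation comes from the operator structure. Since $L$ is symmetric on $L^2(dw)$ and lowers polynomial degree by two, $\int PA_Q\,dw=\int (L^KP)\,b_Q\,dw=0$ for $\deg P<2K$. In particular, taking $2K>L_0$, the full sum of the moment functionals $\Lambda_h(P)=\int P\chi_hA_Q\,dw$ vanishes on $\mathcal P_{L_0}$.

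Next comes the telescoping. For $\ell=1,2,\dots$ let $R_{\ell,h,Q}$ be a dyadic cube of sidelength $\simeq2^\ell\ell(Q)$ containing $R_{\ell-1,h,Q}$. At each level I use Lemma~\ref{lem:smooth-moment-correction} on a ball $U_\ell\supset R_{\ell,h,Q}$ to produce $q_{\ell,h}\in C_c^\infty(2U_\ell)$ that carries the accumulated moment functional. I then rewrite the piece at level $\ell-1$ as an atom (its moments subtracted) plus $q_{\ell,h}$ pushed to level $\ell$. I stop at the first level $L_Q$ where $2^{L_Q}\ell(Q)\gtrsim|x_Q|$. At that level a single cube of this size contains all reflected images, so the orbit pieces can be merged, and the vanishing of the total moment functional makes the final remainder an honest differential atom. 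This is why $L_Q$ depends on $x_Q$, and why \eqref{eq:Latom-geometry} holds: every $R_{\ell,h,Q}$ lies within distance $\lesssim2^\ell\ell(Q)$ of $\sigma(x_Q)$ for some $\sigma\in G$. The expansion is a finite sum of compactly supported functions, so \eqref{eq:Latom-differential-decomposition} holds in $L^1(dw)$ and in $\Scirc'$ trivially.

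The main obstacle is the coefficient decay \eqref{eq:Latom-coefficient}--\eqref{eq:Latom-weighted-coefficient} for arbitrary $E$ and small $p$. The crude bound $|\Lambda(P)|\le w(Q)^{1-1/p}\sup_{3Q}|P|$, normalized against $w(R_\ell)^{-1/p}$, decays only when $p>1$. To get the needed smallness I would run the moment transfer at the level of $b_Q$ rather than $A_Q$. Expand $A_Q=L^Kb_Q=\sum_{|\beta|=2K}c_\beta T^\beta b_Q$ and use skew-adjointness of the $T_j$ in $L^2(dw)$ to move $T^\beta$ onto $P\chi_h$. The resulting integrand involves $b_Q$, with $|b_Q|\lesssim\ell(Q)^{2K}w(Q)^{-1/p}$ by Definition~\ref{def:smooth-L-atom}(ii), against Dunkl derivatives of $P$ times cutoffs at scale $2^\ell\ell(Q)$, which are controlled by Lemma~\ref{lem:euclidean-to-dunkl}. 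Sizes are compared via Lemma~\ref{lem:smooth-moment-correction} and the polynomial estimate \eqref{eq:orthonormal-polynomial-bound}, so each transferred functional at level $\ell$ gains a factor $(\ell(Q)/2^\ell\ell(Q))^{2K-L_0-1}$. This beats the volume loss $(w(R_\ell)/w(Q))^{1/p}\lesssim2^{\ell\Qdim/p}$, and taking $K$ large depending on $E,L_0,p,\Qdim$ yields $2^{-\ell E}$.

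The delicate point is that the difference parts of $T_j$ couple the cutoffs $\chi_h$ across reflections. I would handle this by choosing the cutoffs at level $\ell$ to be $G$-compatible, i.e.\ built from a $G$-invariant family permuted by $G$. The coupling terms then stay supported in $\mathcal O(U_\ell)$ and can be redistributed among the at most $C_G$ cubes at that level, keeping $N_{\ell,Q}\le C_G$.
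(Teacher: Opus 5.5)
There is a genuine gap, and it lies in how you organize the telescoping, not in the analytic mechanism. The mechanism itself is right: write a moment functional as $\int b_Q\,L^K(P\chi)\,dw$ with $\chi$ smooth at scale $D$, and combine $\|b_Q\|_{L^1(dw)}\lesssim r^{2K}w(Q)^{1-1/p}$, $r=\ell(Q)$, with Lemma~\ref{lem:euclidean-to-dunkl} to gain $(r/D)^{2K}$.

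\textbf{Where your scheme breaks.} You keep one chain per orbit piece $h$ and merge only at the level where $2^{L_Q}r\gtrsim|x_Q|$. So for every $\ell<L_Q$ the corrector $q_{\ell,h}$ carries the same fixed functional $\Lambda_h(P)=\int P\chi_hA_Q\,dw$, where $\chi_h$ varies on scale $r$. To gain $(r/D_\ell)^{2K}$ for $\Lambda_h$, you must replace $\chi_h$ on $\supp A_Q$ by a cutoff smooth at scale $D_\ell\simeq2^\ell r$. That is possible only if piece $h$ lies at distance $\gtrsim D_\ell$ from the rest of $\supp A_Q$.

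This fails whenever $x_Q$ is much closer to a reflecting hyperplane than to the origin. Take $G=\{I,\sigma\}$ on $\R^2$ with $\sigma(x_1,x_2)=(x_1,-x_2)$, and $x_Q=(X,s)$ with $3r\le s\ll X$. The two reflected cubes are $\simeq s$ apart, so $\Lambda_h$ gains at most the fixed factor $(r/s)^{2K}$, and in general no more. For instance, for non-integral $\kappa$ and suitable $b_Q$ odd in $x_2$, the difference part of $L$ gives each half of $A_Q$ a zeroth moment of order $(r/s)^{2K}w(Q)^{1-1/p}$.

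Consequences:
\begin{itemize}
\item Your claimed gain $(\ell(Q)/2^\ell\ell(Q))^{2K-L_0-1}$ for the transferred functional is false once $2^\ell r\gg s$.
\item The $\simeq\log_2(X/s)$ transition terms $q_{\ell-1,h}-q_{\ell,h}$ then obey only the crude bounds. Their coefficients are of order $(r/s)^{2K}(w(Q)/w(U_\ell))^{1-1/p}$, with no decay in $\ell$ for $p\le1$.
\item The weighted quantities $|\lambda|w(R)^{-1/p}$ are of order $(r/s)^{2K}w(Q)^{-1/p}\,w(Q)/w(U_\ell)$, where $w(Q)/w(U_\ell)\gtrsim2^{-\ell\Qdim}$.
\end{itemize}
Letting $X/s\to\infty$, this is incompatible with \eqref{eq:Latom-coefficient}--\eqref{eq:Latom-weighted-coefficient} for large $E$ uniformly in $Q$, which is what Lemma~\ref{lem:L-atomic-transfer} needs. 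Your first step has the same defect when several covering-cube cutoffs $\chi_h$ cut through a single reflected cube.

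\textbf{The missing idea: merge the orbit pieces progressively}, at the scale where they come together.
\begin{itemize}
\item At level $\ell$, the paper splits $\mathcal O(x_Q)$ into the connected components $C$ of the graph joining points at distance $<D_\ell=A_*2^\ell r$.
\item These components nest as $\ell$ grows, have diameter $\le(|G|-1)D_\ell$, and distinct ones are $\ge D_\ell$ apart.
\item Hence the cutoffs $\chi_{\ell,C}=1-\prod_{z\in C}[1-\rho((\cdot-z)/D_\ell)]$ are smooth at scale $D_\ell$, have disjoint supports, and equal $1$ on the part of $\supp A_Q$ near $C$.
\item So $A_Q=\sum_CA_Q\chi_{\ell,C}$ at every level, and $A_Q\chi_{\ell+1,C}=\sum_{C'\subset C}A_Q\chi_{\ell,C'}$.
\item The accumulated functional is therefore exactly $\int b_Q\,L^K(P\chi_{\ell,C})\,dw=O((r/D_\ell)^{2K})$.
\item The transition terms are $\sum_{C'\subset C}q_{\ell,C'}-q_{\ell+1,C}$, and one stops when a single component remains.
\end{itemize}

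\textbf{The coupling you single out is harmless.} Only $\|L^K(P\chi)\|_\infty$ enters. Lemma~\ref{lem:euclidean-to-dunkl} bounds it globally by the scale-$D$ $C^{2K}$ norm of $P\chi$, wherever the reflected terms land, so no $G$-compatibility of the cutoffs is needed. Nor can $G$-compatible cutoffs remove the real obstruction: no cutoff smooth at scale $D_\ell$ separates pieces closer than $D_\ell$.
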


\begin{proof}
Put \(r=\ell(Q)\), \(Z=\mathcal O(x_Q)\), and \(J_G=|G|\).  Choose a fixed constant $A_*>10\sqrt N$ (larger if necessary below) and let
\[
 D_\ell=A_*\,2^\ell r,\qquad \ell=0,1,\dots.
\]
For each $\ell$, join $z,z\prime\in Z$ when $|z-z\prime|<D_\ell$.  Let \(\mathscr C_\ell\) be the family of graph-theoretic
connected components.  Thus \(z,z'\) lie in the same component precisely when
there is a finite chain
\[
 z=z_0,z_1,\dots,z_m=z'
\]
with \(|z_i-z_{i+1}|<D_\ell\) for every \(i\).  Each component has at most $J_G$ vertices, so
\begin{equation}\label{eq:Latom-cluster-diameter}
 \diam C\leq(J_G-1)D_\ell,
 \qquad C\in\mathscr C_\ell,
\end{equation}
where \(\diam\) is the Euclidean diameter.  Each component at level \(\ell\)
is contained in a unique component at level \(\ell+1\).  Since \(Z\) is finite,
there is a finite \(L_Q\) for which \(\mathscr C_{L_Q}\) consists of one
component.

Choose the same radial cutoff \(\rho\in C_c^\infty(B(0,1/2))\), with
\(\rho=1\) on \(B(0,1/4)\), and define
\[
 \chi_{\ell,C}(x)=1-\prod_{z\in C}
 \Big[1-\rho\Big(\frac{x-z}{D_\ell}\Big)\Big].
\]
Choose \(x_{\ell,C}\in C\) and put
\(U_{\ell,C}=B(x_{\ell,C},8J_GD_\ell)\).  Then
\begin{equation}\label{eq:Latom-cutoff-bounds}
 \supp\chi_{\ell,C}\subset U_{\ell,C},
 \qquad
 |\partial^\alpha\chi_{\ell,C}|\lesssim_\alpha D_\ell^{-|\alpha|}.
\end{equation}
Cutoffs for distinct components have disjoint supports.  Since
$A_*>10\sqrt N$, every point of $\supp A_Q\subset 3\mathcal O(Q)$ lies in the region where
the cutoff belonging to its level-$0$ component is equal to one.  Thus $\sum_{C\in\mathscr C_0}\chi_{0,C}=1$ on $\supp A_Q$. Put
\[
 A_{\ell,C}:=A_Q\chi_{\ell,C},
\]
we have
\begin{equation}\label{eq:Latom-partition-on-support}
 A_Q=\sum_{C\in\mathscr C_\ell}A_{\ell,C}
 \qquad\text{for every }0\leq\ell\leq L_Q.
\end{equation}

We next estimate the local polynomial moments.  Since \(A_Q=L^Kb_Q\),
self-adjointness gives, for \(P\in\mathcal P_{L_0}\),
\[
 \int A_{\ell,C}P\,dw
 =\int b_Q L^K(P\chi_{\ell,C})\,dw.
\]
By Definition~\ref{def:smooth-L-atom},
\[
 \|b_Q\|_{L^1(dw)}\lesssim r^{2K}w(Q)^{1-1/p}.
\]
Put $U=U_{\ell,C}$ and $D=D_\ell$. Since $U$ has radius
$8J_GD$, the polynomial estimate \eqref{eq:orthonormal-polynomial-bound}
(after adjusting a fixed geometric constant) and
\eqref{eq:Latom-cutoff-bounds} imply
\[
 \|\partial^\alpha(P\chi_{\ell,C})\|_\infty
 \lesssim D^{-|\alpha|}w(U)^{-1/2}
 \|P\|_{L^2(U,dw)},\qquad |\alpha|\le2K.
\]
The function $P\chi_{\ell,C}$ is smooth and compactly supported.
Apply Lemma~\ref{lem:euclidean-to-dunkl} at scale $D$ and use
$L^K=(-1)^K(\sum_jT_j^2)^K$. We obtain the uniform bound
\[
 \|L^K(P\chi_{\ell,C})\|_\infty
 \lesssim D_\ell^{-2K}w(U_{\ell,C})^{-1/2}
 \|P\|_{L^2(U_{\ell,C},dw)}.
\]
The estimate is global because $P\chi_{\ell,C}$ is compactly supported.
Therefore
\[\Big|\int A_{\ell,C}P\,dw\Big|
 \lesssim
 \Big(\frac r{D_\ell}\Big)^{2K}
 \frac{w(Q)^{1-1/p}}{w(U_{\ell,C})^{1/2}}
 \|P\|_{L^2(U_{\ell,C},dw)}.\]
Apply Lemma~\ref{lem:smooth-moment-correction} to obtain
\(q_{\ell,C}\in C_c^\infty(2U_{\ell,C})\) such that
\begin{equation}\label{eq:Latom-corrector-moments}
 \int q_{\ell,C}P\,dw=\int A_{\ell,C}P\,dw,
 \qquad P\in\mathcal P_{L_0},
\end{equation}
and, for \(|\alpha|\leq2M\),
\begin{equation}\label{eq:Latom-corrector-derivatives}
 |\partial^\alpha q_{\ell,C}(x)|
 \lesssim
 \Big(\frac r{D_\ell}\Big)^{2K}
 w(Q)^{1-1/p}w(U_{\ell,C})^{-1}D_\ell^{-|\alpha|}.
\end{equation}

At the initial scale set
\[
 H_{0,C}=A_{0,C}-q_{0,C},\qquad C\in\mathscr C_0.
\]
If \(C\in\mathscr C_{\ell+1}\), let \(\operatorname{ch}_\ell(C)\) denote the
components of \(\mathscr C_\ell\) contained in \(C\), and put
\begin{equation}\label{eq:Latom-transition}
 H_{\ell+1,C}
 =\sum_{C'\in\operatorname{ch}_\ell(C)}q_{\ell,C'}-q_{\ell+1,C}.
\end{equation}
Because of \eqref{eq:Latom-partition-on-support},
\[
 A_{\ell+1,C}
 =\sum_{C'\in\operatorname{ch}_\ell(C)}A_{\ell,C'}.
\]
Hence \eqref{eq:Latom-corrector-moments} implies that every
\(H_{\ell,C}\) has all ordinary moments through degree \(L_0\) equal to zero.
At the last level there is one component \(C_*\), and
\(A_{L_Q,C_*}=A_Q\).  Since $2K>L_0$, repeated Dunkl integration by parts (equivalently, first
insert a $G$-invariant smooth cutoff equal to one on the compact support) gives
\[
 \int A_QP\,dw=\int b_QL^KP\,dw=0,
 \qquad P\in\mathcal P_{L_0},
\]
because $L$ lowers ordinary polynomial degree by two. Hence we may take
\(q_{L_Q,C_*}=0\). Telescoping now gives the finite identity
\begin{equation}\label{eq:Latom-telescoping}
 A_Q
 =\sum_{C\in\mathscr C_0}H_{0,C}
 +\sum_{\ell=0}^{L_Q-1}
 \sum_{C\in\mathscr C_{\ell+1}}H_{\ell+1,C}.
\end{equation}

By \eqref{eq:Latom-cluster-diameter},
\[
 2U_{\ell,C'}\subset 2U_{\ell+1,C}
 \qquad(C'\in\operatorname{ch}_\ell(C)).
\]
Choose an ordinary Euclidean dyadic cube \(R_{\ell,C}\), of one fixed
generation relative to \(D_\ell\), so that
\[
 2U_{\ell,C}\subset3R_{\ell,C},
 \qquad
 \ell(R_{\ell,C})\simeq D_\ell,
 \qquad
 w(R_{\ell,C})\simeq w(U_{\ell,C}).
\]
For \(H_{0,C}\), the derivative bounds of the smooth operator atom and
\eqref{eq:Latom-corrector-derivatives} give the differential-atom bounds at scale
\(r\).  For \(\ell\geq0\), the transition terms in
\eqref{eq:Latom-transition} satisfy, for \(|\alpha|\leq2M\),
\[
 |\partial^\alpha H_{\ell+1,C}(x)|
 \lesssim
 \Big(\frac r{D_\ell}\Big)^{2K}
 w(Q)^{1-1/p}w(U_{\ell+1,C})^{-1}D_\ell^{-|\alpha|}.
\]
Lemma~\ref{lem:euclidean-to-dunkl} also gives the Dunkl derivative
estimates. Normalizing by
$w(R_{\ell+1,C})^{-1/p}$, doubling and $G$-invariance give
\[
 |\lambda_{\ell+1,C}|
 \lesssim
 2^{-\ell[2K-\Qdim(1/p-1)_+]},
\]
while
\[
 |\lambda_{\ell+1,C}|w(R_{\ell+1,C})^{-1/p}
 \lesssim
 2^{-2K\ell}\frac{w(Q)}{w(R_{\ell+1,C})}w(Q)^{-1/p}
 \lesssim 2^{-2K\ell}w(Q)^{-1/p}.
\]
Since $K$ is auxiliary, it may be chosen so large that
$2K-\Qdim(1/p-1)_+>E$ and $2K>E$.  Each level has at most $J_G$ components. Renaming the terms in
\eqref{eq:Latom-telescoping} as \(a_{\ell,h,Q}\) proves
\eqref{eq:Latom-differential-decomposition}--\eqref{eq:Latom-weighted-coefficient}.

This completes our proof.
\end{proof}

Dziuba\'nski and Hejna-{\L}y\.zwa \cite{DziubanskiHejnaLyzwa2026}
obtained Coifman--Weiss atoms for Dunkl $H^p$, $0<p\leq1$.
Here we also require ordinary derivative bounds, higher-order moments,
and Besov--Triebel--Lizorkin coefficient estimates.

\begin{lemma}
\label{lem:L-atomic-transfer}
Fix \(s\in\R\) and \(0<p,q\le\infty\). Choose the auxiliary order \(K\) in
Proposition~\ref{prop:Latom-to-atoms} so that \(E>0\) is sufficiently large.
After grouping atoms associated with the same output cube, the coefficient family \(v\)
obtained from \(\sum_Qu_QA_Q\) satisfies
\begin{align}
 \|v\|_{\dot b^{s,\mathrm{at}}_{p,q}}
 &\lesssim \|u\|_{\dot b^{s,\mathrm{at}}_{p,q}},
 \label{eq:L-transfer-b}\\
 \|v\|_{\dot f^{s,\mathrm{at}}_{p,q}}
 &\lesssim \|u\|_{\dot f^{s,\mathrm{at}}_{p,q}},
 \label{eq:L-transfer-f}
\end{align}
with the second estimate valid when $p<\infty$.
\end{lemma}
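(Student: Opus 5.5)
We substitute the decomposition of Proposition~\ref{prop:Latom-to-atoms} into $f=\sum_Q u_QA_Q$. This gives
\[
 f=\sum_{\nu}\sum_{Q\in\Dcal_\nu}\sum_{\ell=0}^{L_Q}\sum_{h\le N_{\ell,Q}}u_Q\lambda_{\ell,h,Q}a_{\ell,h,Q}.
\]
For an output cube $R\in\Dcal_\mu$ we collect all triples $(Q,\ell,h)$ with $R_{\ell,h,Q}=R$. By \eqref{eq:Latom-geometry} this forces $\mu=\nu-\ell+O(1)$ and $d(x_R,x_Q)\lesssim\ell(R)$. Put
\[
 v_R:=\sum_{R_{\ell,h,Q}=R}|u_Q\lambda_{\ell,h,Q}|,\qquad a_R:=v_R^{-1}\sum u_Q\lambda_{\ell,h,Q}a_{\ell,h,Q}.
\]
All $a_{\ell,h,Q}$ with the same $R$ are $(p;M,L_0)$-atoms on $R$, and $a_R$ is a convex combination of them, so it is again such an atom. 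For fixed $R$ and $\ell$, the cubes $Q\in\Dcal_{\mu+\ell}$ with $d(x_Q,x_R)\lesssim 2^\ell\ell(Q)$ lie in $\mathcal O(CR)$. Hence there are at most $C_G2^{N\ell}$ of them. It therefore suffices to bound, level by level in $\ell$, the contribution
\[
 v_R^{(\ell)}=\sum_{Q\in\Dcal_{\mu+\ell},\ Q\subset \mathcal O(CR)}|u_Q|2^{-\ell E}
\]
and then sum over $\ell$ using the $\min(1,p,q)$-triangle inequality. The extra loss has the form $2^{-\ell E}$ times polynomial factors in $2^\ell$, and $E$ is large.

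\emph{Besov case.} We use \eqref{eq:Latom-coefficient}. For $p\le1$ we apply the $p$-triangle inequality:
\[
 \sum_{R\in\Dcal_\mu}(v_R^{(\ell)})^p\lesssim 2^{-\ell Ep}\sum_{R}\sum_{Q\subset\mathcal O(CR)}|u_Q|^p\lesssim 2^{-\ell Ep}\sum_{Q\in\Dcal_{\mu+\ell}}|u_Q|^p.
\]
The last step uses bounded overlap: each $Q$ lies in $\mathcal O(CR)$ for at most $C_G$ cubes $R$ of a given generation. For $p>1$ we apply H\"older to the at most $C2^{N\ell}$ summands, which costs $2^{N\ell(1-1/p)}$. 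Multiplying by $2^{\mu s}=2^{(\nu-\ell)s}2^{O(1)}$ costs $2^{\ell|s|}$. Taking $\ell^q$ in $\mu$ and summing in $\ell$ gives \eqref{eq:L-transfer-b}, provided $E>N+|s|$.

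\emph{Triebel--Lizorkin case, the main obstacle.} Here the supports do not match: $u_Q$ is attached to $\1_Q$, whereas $v_R$ is attached to $\1_R$, and $R$ covers only one Euclidean neighbourhood of the orbit $\mathcal O(x_Q)$. So we must control $\1_R$ in terms of a maximal function of $\1_Q$ across the reflected copies. We use \eqref{eq:Latom-weighted-coefficient}, which gives
\[
 v_R^{(\ell)}w(R)^{-1/p}\1_R(x)\lesssim 2^{-\ell E}\sum_{Q}|u_Q|w(Q)^{-1/p}\1_R(x).
\]
Fix $0<r<\min\{1,p,q\}$. For $x\in R$ and $Q\subset\mathcal O(CR)$ we have $d(x,x_Q)\lesssim 2^\ell\ell(Q)$. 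Applying the two-scale estimate \eqref{eq:two-scale-discrete} with $\lambda=0$ and $k-j=\ell$ gives
\[
 \sum_{Q\subset\mathcal O(CR)}|u_Q|w(Q)^{-1/p}\1_R(x)\lesssim 2^{\Qdim\ell/r}\,\MO_r\Big(\sum_{Q\in\Dcal_{\mu+\ell}}|u_Q|w(Q)^{-1/p}\1_Q\Big)(x).
\]
This holds because the orbit maximal function sees all reflected copies of $R$ simultaneously. Summing over $R\in\Dcal_\mu$ (disjoint supports) and multiplying by $2^{\mu s}$, the $\ell$-th level is therefore bounded by $2^{-\ell(E-|s|-\Qdim/r)}\MO_r(G_{\mu+\ell})$, where $G_\nu=2^{\nu s}\sum_Q|u_Q|w(Q)^{-1/p}\1_Q$. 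The Fefferman--Stein inequality for $\MO$ in $L^p(\ell^q)$ then gives the bound $2^{-\ell(E-|s|-\Qdim/r)}\|u\|_{\dot f^{s,\mathrm{at}}_{p,q}}$. Summing over $\ell$ with the $\min(1,p,q)$-triangle inequality yields \eqref{eq:L-transfer-f}, once $K$ (hence $E$) is chosen so that $E>|s|+\Qdim/r+N$.

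Finally, convergence in $\Scirc'$ of the regrouped series follows from absolute convergence of the finite inner sums and the synthesis bound of the first implication in Theorem~\ref{thm:atomic-characterization}, applied to finite partial sums.
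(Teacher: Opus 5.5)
Your proposal is correct and takes essentially the same route as the paper. You work level by level in $\ell$, using that at most $C_G2^{N\ell}$ input cubes feed a given output cube; the Besov estimate comes from the $p$-triangle/H\"older bound; and the Triebel--Lizorkin estimate comes from the weighted bound \eqref{eq:Latom-weighted-coefficient}, an $r$-convexity bound by $2^{\ell\Qdim/r}\MO_r$, and Fefferman--Stein. The differences are cosmetic: you get the maximal-function bound from Lemma~\ref{lem:discrete} with $\lambda=0$ rather than directly from $Q\subset B^d(x,C2^\ell 2^{-\nu})$ and doubling, and you make the grouping explicit as an absolutely convex combination of atoms.
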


	\begin{proof}
		Let $Q\in\Dcal_\nu$ be an input cube associated
		with a smooth $L$-atom $A_Q$. By
		Proposition~\ref{prop:Latom-to-atoms}, for each
		$\ell\ge0$, the atom $A_Q$ produces differential
		atoms $a_{\ell,h,Q}$ supported in $3R_{\ell,h,Q}$.
		Let $R=R_{\ell,h,Q}\in\Dcal_\mu$ be one of these
		output cubes.
		
		By \eqref{eq:Latom-geometry}, there exist
		two fixed nonnegative integers $a^-,a^+$
		such that
		\[
		2^{\ell-a^-}\ell(Q)
		\le \ell(R)
		\le 2^{\ell+a^+}\ell(Q).
		\]
		Hence
		\begin{equation}\label{eq:transfer-generations}
		\nu-\ell-a^+\le\mu\le\nu-\ell+a^-.
		\end{equation}
		In particular, for fixed $\nu$ and $\ell$,
		there are only a fixed number of possible
		output generations.

	Fix $\nu$ and $\ell\ge0$. By
	\eqref{eq:Latom-telescoping}, each $Q\in\Dcal_\nu$
	produces at most $|G|$ output terms at level
	$\ell$. Conversely, if $Q$ contributes to $R$, then
	\eqref{eq:Latom-geometry} gives
	\[
	x_Q\in\bigcup_{\sigma\in G}
	B(\sigma(x_R),C2^\ell2^{-\nu}).
	\]
	Hence at most $C_G2^{\ell N}$ input cubes
	contribute to a fixed $R$.
	
	Let $v_R^{\nu,\ell}$ be the sum of these
	contributions. The
	$p$-triangle inequality and H\"older\'s inequality give
	\[
	\left(
	\sum_{R\in\Dcal_\mu}
	|v_R^{\nu,\ell}|^p
	\right)^{1/p}
	\lesssim
	2^{-\ell[E-N(1-1/p)_+]}
	\left(
	\sum_{Q\in\Dcal_\nu}|u_Q|^p
	\right)^{1/p},
	\]
	with the usual modification for $p=\infty$.
	
	Since
	\[
	2^{\mu s}
	\lesssim 2^{\nu s}2^{\ell|s|},
	\]
	and there are at most $a^-+a^++1$
	possible values of $\mu$, summing over
	$\nu,\ell$ proves \eqref{eq:L-transfer-b},
	provided
	\[
	E>N(1-1/p)_++|s|.
	\]
	
	For \eqref{eq:L-transfer-f}, put
	\[
	G_\nu
	=\sum_{Q\in\Dcal_\nu}
	|u_Q|w(Q)^{-1/p}\1_Q
	\]
	and choose
	\[
	0<r_0<\min\{1,p,q\},
	\]
	with $r_0<\min\{1,p\}$ when $q=\infty$.
	
	If $Q$ contributes to $R$ and $x\in R$,
	then
	\[
	Q\subset B^d(x,C2^\ell2^{-\nu}).
	\]
	By doubling and $G$-invariance,
	\[
	w(B^d(x,C2^\ell2^{-\nu}))
	\lesssim 2^{\ell\Qdim}w(Q).
	\]
	Thus \eqref{eq:Latom-weighted-coefficient}
	gives
	\begin{align*}
		\sum_{R\in\Dcal_\mu}
		|v_R^{\nu,\ell}|
		w(R)^{-1/p}\1_R(x)
		&\quad\lesssim 2^{-\ell E}
		\sum_{\substack{Q\in\Dcal_\nu\\
				Q\subset B^d(x,C2^\ell2^{-\nu})}}
		|u_Q|w(Q)^{-1/p}\\
		&\quad\lesssim
		2^{-\ell(E-\Qdim/r_0)}
		\MO_{r_0}G_\nu(x).
	\end{align*}
	Consequently,
	\begin{align*}
		&2^{\mu s}
		\sum_{R\in\Dcal_\mu}
		|v_R^{\nu,\ell}|
		w(R)^{-1/p}\1_R(x)
		 \lesssim
		2^{\nu s}
		2^{-\ell(E-\Qdim/r_0-|s|)}
		\MO_{r_0}G_\nu(x).
	\end{align*}
	Choose
	\[
	E>\Qdim/r_0+|s|.
	\]
	By \eqref{eq:transfer-generations},
	each output generation receives
	contributions from only a fixed number
	of input generations for each $\ell$.
	Summing over $\ell$ and applying the
	vector-valued Fefferman--Stein inequality
	give
	\[
	\|v\|_{\dot f^{s,\mathrm{at}}_{p,q}}
	\lesssim
	\left\|
	\left(
	\sum_{\nu\in\mathbb Z}
	[2^{\nu s}G_\nu]^q
	\right)^{1/q}
	\right\|_{L^p(dw)}
	=
	\|u\|_{\dot f^{s,\mathrm{at}}_{p,q}},
	\]
	with the usual modification for $q=\infty$.
	
	This completes our proof.
\end{proof}

We finish the proof of Theorem~\ref{thm:atomic-characterization}.

\begin{proof}
The direct inclusion follows from
\eqref{eq:atom-synthesis-B}--\eqref{eq:atom-synthesis-F}.
For the converse, apply Lemma~\ref{lem:smooth-L-atomic-decomp},
Proposition~\ref{prop:Latom-to-atoms}, and
Lemma~\ref{lem:L-atomic-transfer}, taking $K$ sufficiently large.
They give $f=\sum_R v_Ra_R$ in $\Scirc'$ with differential atoms $a_R$ and
\begin{align*}
 \|v\|_{\dot b^{s,\mathrm{at}}_{p,q}}
 &\lesssim \|u\|_{\dot b^{s,\mathrm{at}}_{p,q}}
 \lesssim\|f\|_{\dot B^s_{p,q}(dw)},\\
 \|v\|_{\dot f^{s,\mathrm{at}}_{p,q}}
 &\lesssim \|u\|_{\dot f^{s,\mathrm{at}}_{p,q}}
 \lesssim\|f\|_{\dot F^s_{p,q}(dw)},
\end{align*}
for the respective spaces. The wavelet norm equivalences follow from
Theorem~\ref{thm:frame-spaces}.

This completes our proof.
\end{proof}

\section{Applications}

We give two applications of the wavelet frame constructed
in Section~3. First, we establish fractional chain rules
for Dunkl Besov and Triebel--Lizorkin spaces.
We then study nonlinear wavelet approximation and prove
a Jackson inequality.

\subsection{Fractional chain rules}
Following \cite{BuiBuiDuongChain}, let $F:\C\to\C$ and $G:\C\to[0,\infty)$ satisfy
\begin{equation}\label{eq:chain-condition}
 |F(z)-F(w)|\leq [G(z)+G(w)]|z-w|,
 \qquad z,w\in\C.
\end{equation}
For homogeneous spaces, $F(u)$ is understood modulo $F(0)$.
Typically, $G(z)\simeq|F'(z)|$.

\begin{theorem}\label{thm:chain-F}
Let $0<s<1$, $0<p<\infty$, $1<p_1\leq\infty$, $0<p_2<\infty$, and $0<q\leq\infty$ satisfy
\[
 \frac1p=\frac1{p_1}+\frac1{p_2}.
\]
Assume either
\begin{equation}\label{eq:F-chain-condition-indices}
 0\leq
 \Qdim\Big(
 \max\Big\{\frac1{p_2},\frac1q,1\Big\}
 -\frac{p_1-1}{p_1}
 \Big)<s,
\end{equation}
or $\min\{p,q\}>1$. Let $u\in\Sscr(\R^N)$. If $G(u)\in L^{p_1}(dw)$ and \eqref{eq:chain-condition} holds, then
\[\norm{F(u)}_{\dotF^{s}_{p,q}(dw)}
 \lesssim
 \norm{G(u)}_{L^{p_1}(dw)}
 \norm{u}_{\dotF^{s}_{p_2,q}(dw)}.\]
The same estimate holds for the operator-associated Triebel--Lizorkin spaces.
\end{theorem}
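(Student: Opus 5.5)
The plan is to estimate $P_j F(u)$ pointwise by exploiting the cancellation of the kernel $K_j$. I would not use the wavelet coefficients directly. Two facts from Proposition~\ref{prop:kernel} drive the argument:
\[
\int K_j(x,y)\,dw(y)=0,
\qquad
\Big(1+2^j|x-y|\Big)^{-2}\ \text{decay in the Euclidean distance.}
\]
The first gives
\[
P_jF(u)(x)=\int K_j(x,y)\,[F(u(y))-F(u(x))]\,dw(y),
\]
so $F(0)$ plays no role, consistent with working modulo constants. By \eqref{eq:chain-condition},
\[
|P_jF(u)(x)|\le\int|K_j(x,y)|\,[G(u(x))+G(u(y))]\,|u(y)-u(x)|\,dw(y).
\]
Next I would expand $u=\sum_kP_k^2u$, which is valid in $\Scirc'$ and pointwise modulo constants because $u\in\Sscr$. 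The difference $u(y)-u(x)$ then splits into low-frequency pieces ($k\le j$) and high-frequency pieces ($k>j$).

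For $k\le j$, Lemma~\ref{lem:oscillation} at scale $2^{-k}$ combined with the trivial bound gives
\[
|P_k^2u(y)-P_k^2u(x)|\lesssim \min\{1,2^k|x-y|\}\,\big[(P_k^2u)^*_{2^{-k},a}(x)+(P_k^2u)^*_{2^{-k},a}(y)\big].
\]
This is the step I expect to be the main obstacle. The kernel of $P_j$ is small only in the orbit distance $d(x,y)$, so a priori $y$ may be close to a reflection $\sigma(x)$, where $u(y)-u(x)$ is not small. The remedy is the extra Euclidean factor $(1+2^j|x-y|)^{-2}$ in \eqref{eq:kernel-size}. Setting $b=2^j|x-y|$, one has
\[
\min\{1,2^{k-j}b\}\,(1+b)^{-2}\le 2^{k-j}.
\]
Hence the low-frequency part gains the factor $2^{-(j-k)}$, with the orbit decay still available for integrability. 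Since $s<1$, this factor gives a summable weight $2^{-(j-k)(1-s)}$ after multiplying by $2^{js}$. For $k>j$, I bound $|u(y)-u(x)|$ by $|P_k^2u(y)|+|P_k^2u(x)|$. Because $P_k^2u$ has frequency $\sim 2^k\ge 2^j$, the coarser scale in the Peetre maximal function costs only a factor $2^{(k-j)\Qdim/r}$; that is, by Lemma~\ref{lem:peetre} at scale $2^{-j}$ and \eqref{eq-elementary estimate}, the resulting weight is $2^{-(k-j)(s-\Qdim/r+\dots)}$. This is where the index restriction \eqref{eq:F-chain-condition-indices} enters. In the case $\min\{p,q\}>1$, I would take $r=1$, and no loss occurs.

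The terms carrying $G(u(x))$ simply factor out. For the terms carrying $G(u(y))$, I would apply \eqref{eq-elementary estimate} to the product $G(u)\,|P_k^2u|$ and then Hölder inside the maximal function:
\[
\MO_r\big(G(u)h\big)\le \MO_{r\theta'}(G(u))\,\MO_{r\theta}(h).
\]
The parameters are chosen so that $r\theta'<p_1$ and $r\theta<\min\{p_2,q\}$; this is possible exactly under \eqref{eq:F-chain-condition-indices}, since $p_1>1$ leaves room. The result is
\[
2^{js}|P_jF(u)(x)|\lesssim \sum_k 2^{-\delta|j-k|}\,\big[G(u)(x)+\MO_{r\theta'}G(u)(x)\big]\,\MO_{r\theta}\big(2^{ks}P_k^2u\big)(x)
\]
for some $\delta>0$. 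Taking the $\ell^q$ norm in $j$ and using discrete Young, the maximal factor in $G(u)$ comes out as a single function. Hölder in $L^p$ with $1/p=1/p_1+1/p_2$, the scalar maximal inequality for $\MO$ on $L^{p_1}$, and the vector-valued Fefferman--Stein inequality on $L^{p_2}(\ell^q)$ then yield $\|G(u)\|_{L^{p_1}}\,\|u\|_{\dot F^s_{p_2,q}(dw)}$, after using Lemma~\ref{lem:peetre} to pass from $P_k^2u$ back to $P_ku$.

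The identification with the operator-associated spaces noted after Definition~\ref{def:transform-spaces} gives the last sentence of the theorem.
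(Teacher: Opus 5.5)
Your set-up is sound: the kernel cancellation, \eqref{eq:chain-condition}, the expansion $u=\sum_kP_k^2u$, and the low-frequency estimate all work. The bound $\min\{1,2^{k-j}b\}(1+b)^{-2}\le2^{k-j}$ handles the reflected region just as the paper's split into $|x-y|\le2^{-k}$ and $|x-y|>2^{-k}$ does. The gap is in the high-frequency terms $k>j$, which is exactly where \eqref{eq:F-chain-condition-indices} has to come from.

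As you describe it, bounding $|P_k^2u(y)|$ by a Peetre maximal function at the coarse scale $2^{-j}$ costs $2^{(k-j)a}$ with $a>\Qdim/r$. After multiplying by $2^{js}$ you are left with $2^{(k-j)(\Qdim/r-s)}$. Since $s<1\le\Qdim\le\Qdim/r$, this grows with $k-j$, so the sum over $k>j$ diverges for every admissible choice of indices. The compensating gain you leave in ``$+\dots$'' is never produced.

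The step meant to supply it also fails. Applying \eqref{eq-elementary estimate} to the product yields only $\MO(G(u)P_k^2u)(x)$, i.e.\ $r=1$. No Peetre-type inequality lets you pass to $\MO_r(G(u)h)$ with $r<1$, because $G(u)h$ is not band-limited. With $r=1$ your H\"older splitting needs $\theta'<p_1$ and $1<\theta<\min\{p_2,q\}$. That is impossible when $\min\{p_2,q\}\le1$, which is the case the index condition is designed for. It is not guaranteed even when $\min\{p,q\}>1$: take $p_1$ close to $1$ and $q=2$. Note also that the constraints $r\theta'<p_1$, $r\theta<\min\{p_2,q\}$ do not involve $s$, so they cannot be equivalent to \eqref{eq:F-chain-condition-indices}.

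The missing idea is the gain that appears when a function living at the fine scale $2^{-k}$ is integrated against the coarse kernel. The paper gets it by expanding $D_ku$ in the frame. H\"older in $y$ bounds each fine bump by $\MO_\theta(G(u))(x)\,[w(Q)/w(B(x_Q,2^{-j}))]^{1/\theta'}$ (Lemma~\ref{lem:chain-kernel-tools}). Then \eqref{eq:two-scale-discrete} with $\lambda=1/\theta'$ turns this volume ratio into the net factor $2^{\Qdim(k-j)(1/r-1+1/\theta)}$. The resulting weight $2^{-(k-j)(s-\Qdim(1/r-1+1/\theta))}$ is summable precisely under \eqref{eq:F-chain-condition-indices}.

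Your frame-free Peetre route can be repaired in the same spirit:
\begin{enumerate}[label=\textup{(\alph*)}]
\item Write $|h(y)|=|h(y)|^{1-r}|h(y)|^{r}$, and bound only the first factor by $[h^*_{2^{-k},a}(x)]^{1-r}\,2^{(k-j)a(1-r)}(1+2^jd(x,y))^{a(1-r)}$.
\item Apply H\"older to $G(u)(y)|h(y)|^r$ against the kernel.
\item Use $h^*_{2^{-k},a}\lesssim\MO_\rho h$ with $\rho=r\theta'$ and $a>\Qdim/\rho$.
\end{enumerate}
This gives $2^{(k-j)\Qdim(1/\rho-1+1/\theta)}\,\MO_\theta(G(u))(x)\,\MO_\rho h(x)$ with $\rho<\min\{p_2,q,1\}$ and $\theta<p_1$, which reproduces the paper's condition.

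For the alternative hypothesis $\min\{p,q\}>1$, do not split the product at all. Keep $\MO(G(u)\,2^{ks}P_k^2u)$, apply the vector-valued maximal inequality in $L^p(\ell^q)$ to these products, and only then use H\"older, as in \eqref{eq:nonlinear-good-range}.
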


\begin{theorem}\label{thm:chain-B}
Let $0<s<1$, $0<p<\infty$, $1<p_1\leq\infty$, $0<p_2,q\leq\infty$, and
\[
 \frac1p=\frac1{p_1}+\frac1{p_2}.
\]
Assume either
\begin{equation}\label{eq:B-chain-condition-indices}
 0\leq
 \Qdim\Big(
 \max\Big\{\frac1{p_2},1\Big\}-\frac{p_1-1}{p_1}
 \Big)<s,
\end{equation}
or $p>1$. Let $u\in\Sscr(\R^N)$. If $G(u)\in L^{p_1}(dw)$ and \eqref{eq:chain-condition} holds, then
\[\norm{F(u)}_{\dotB^{s}_{p,q}(dw)}
 \lesssim
 \norm{G(u)}_{L^{p_1}(dw)}
 \norm{u}_{\dotB^{s}_{p_2,q}(dw)}.\]
The same estimate holds for the operator-associated Besov spaces.
\end{theorem}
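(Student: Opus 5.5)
We would prove the Besov chain rule through the wavelet characterization of Theorem~\ref{thm:frame-spaces}, combined with a first-order difference characterization valid for $0<s<1$. By \eqref{eq:B-frame-equivalence} and \eqref{eq:coefficient-sampling},
\[
 \norm{F(u)}_{\dotB^s_{p,q}(dw)}\simeq\Big(\sum_j 2^{jsq}\Big[\sum_{Q\in\Dcal_{j+J}}w(Q)|P_jF(u)(x_Q)|^p\Big]^{q/p}\Big)^{1/q}.
\]
So the task is to bound $P_jF(u)(x)$ pointwise. Since $\int K_j(x,y)\,dw(y)=0$ (Proposition~\ref{prop:kernel}), we may write
\[
 P_jF(u)(x)=\int K_j(x,y)[F(u)(y)-F(u)(x)]\,dw(y).
\]
This also explains why $F(u)$ is understood modulo $F(0)$. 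Applying \eqref{eq:chain-condition} then gives
\[
 |P_jF(u)(x)|\lesssim\int\frac{(1+2^jd(x,y))^{-M}}{w(B(x,2^{-j}))}\,[G(u)(x)+G(u)(y)]\,|u(y)-u(x)|\,dw(y).
\]

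\emph{Step 1: decompose the increment.} Write $u=\sum_k P_k^2u$ (Calder\'on/Parseval, justified since $u\in\Sscr$ modulo polynomials vanishing in $\Scirc'$). For $k\le j$, use Lemma~\ref{lem:oscillation}. Its Euclidean form is not directly usable when $d(x,y)$ is small but $|x-y|$ is large, so we instead use the mean-value form with the Peetre function along the orbit. That is, we would bound
\[
 |P_k^2u(y)-P_k^2u(x)|\lesssim\min\{1,2^k|x-y|\}\,(1+2^kd(x,y))^{a}(P_ku)^*_{2^{-k},a}(x).
\]
For $k>j$, bound $|P_k^2u(y)|+|P_k^2u(x)|$ by Peetre maximal functions. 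The obstacle here is Euclidean versus orbit distance. The kernel factor $(1+2^j|x-y|)^{-2}$ in \eqref{eq:kernel-size} only gives quadratic decay in $|x-y|$. Hence the contribution from $y$ near a reflected copy $\sigma(x)$, where $|u(y)-u(x)|$ need not be small, has to be handled separately. We would treat that piece crudely as $|u(y)|+|u(x)|$. The factor $(1+2^j|x-y|)^{-2}$ then gains $2^{-2(j-k)}$-type decay only partially. This is the step I expect to be hardest, and the index condition \eqref{eq:B-chain-condition-indices} (with its $\Qdim$ rather than $N$) is where the loss is absorbed.

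\emph{Step 2: separate $G$.} The $G(u)(x)$ term is handled directly. For the $G(u)(y)$ term, apply H\"older with respect to the kernel measure, with exponents $p_1$ and $p_1'$. The factor $[\MO(G(u)^{r_1})]^{1/r_1}$ (with $r_1<p_1$, allowed since $p_1>1$) appears, and the remaining factor is $[\MO_{r}(\cdot)]$ applied to the $u$-pieces. One obtains
\[
 2^{js}|P_jF(u)(x)|\lesssim \mathcal M_{r_1}^{\mathcal O}(G(u))(x)\sum_k 2^{-|j-k|\varepsilon}\,2^{ks}\mathcal M_r^{\mathcal O}(P_ku)(x),
\]
where $\varepsilon>0$ comes from $0<s<1$. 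For $k\le j$ this is the factor $2^{(k-j)(1-s)}$; for $k>j$ it is $2^{(j-k)s}$, reduced by $\Qdim/r$-losses from Lemma~\ref{lem:peetre}. The requirement that these exponents stay positive for some admissible $r<\min\{p_2,1\}$ and $r_1<p_1$ is exactly \eqref{eq:B-chain-condition-indices}. When $p>1$ one can take $r=r_1=1$-type choices and no condition is needed.

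\emph{Step 3: sum.} Take the $L^p$ norm at fixed $j$, via the sampling equivalence \eqref{eq:sampling-lp} or directly, and use H\"older with $1/p=1/p_1+1/p_2$. Then apply the scalar maximal inequality for $\MO$ in $L^{p_1/r_1}$ and $L^{p_2/r}$, and use discrete Young convolution in $\ell^q$ over $j,k$. This yields $\norm{G(u)}_{L^{p_1}}\norm{u}_{\dotB^s_{p_2,q}}$. The operator-associated statement follows because the spaces coincide (remark after Definition~\ref{def:transform-spaces}).
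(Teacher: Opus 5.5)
\textbf{Gap: the fine-frequency range.} Your skeleton matches the paper's: kernel cancellation, \eqref{eq:chain-condition}, $u=\sum_kP_k^2u$, a low/high frequency split, H\"older in $y$ for the $G(u)(y)$ term, then H\"older in $L^p$ and discrete convolution in $\ell^q$. Your low-frequency bound is fine. The gap is in the range $k>j$, the only place where \eqref{eq:B-chain-condition-indices} is used.

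Bounding $|P_k^2u(y)|$ by the Peetre function at $x$ costs $(1+2^kd(x,y))^a\le 2^{(k-j)a}(1+2^jd(x,y))^a$ with $a>\Qdim/r$. After multiplying by $2^{js}$, your scale factor is $2^{(k-j)(a-s)}$. Since $s<1\le N\le\Qdim\le\Qdim/r<a$, this grows for every admissible $r$. Nothing in your accounting produces the compensating gain $\Qdim/r_1'$, which is exactly the term $-\Qdim(p_1-1)/p_1$ in \eqref{eq:B-chain-condition-indices}. So the claim that positivity of your exponents is ``exactly'' that condition is unsupported.

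After H\"older, the $u$-piece appears as $\bigl[\int|K_j(x,y)||P_k^2u(y)|^{r_1'}\,dw(y)\bigr]^{1/r_1'}$. This is a high-exponent average at the coarse scale $2^{-j}$ of a function living at the fine scale $2^{-k}$; it is not $\MO_r$ of the $u$-piece. Showing that it loses only $2^{\Qdim(k-j)(1/r-1/r_1')}$ is the heart of the proof.

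The paper does this through the frame, with your $r_1$ called $\theta$ there:
\begin{itemize}
\item write $D_ku=\sum_{Q\in\Dcal_{k+J}}\ip{u}{\psi_Q}\psi_Q$ and apply H\"older bump by bump;
\item each fine bump $(1+2^kd(y,x_Q))^{-M}$ contributes $[w(Q)/w(B(x_Q,2^{-j}))]^{1/r_1'}$ by \eqref{eq:coarse-fine-convolution} with $\beta=r_1'$;
\item the sum over $Q$ is controlled by \eqref{eq:two-scale-discrete} with $\lambda=1/r_1'$, giving the factor $2^{\Qdim(k-j)(1/r-1+1/r_1)}$ of Lemma~\ref{lem:nonlinear-dyadic}.
\end{itemize}
The $G(u)(x)$ term needs the same kind of gain: the paper uses $\beta=1$ and loses $2^{\Qdim(k-j)(1/r-1)}$. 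So it cannot be ``handled directly'' by the Peetre bound either.

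Within your Peetre framework the same gain is available by interpolation. On a coarse orbit ball $B$, write $|h|^{r_1'}\le(\sup_B|h|)^{r_1'-r}|h|^r$, so the Peetre loss is paid only on the power $r_1'-r$. After the $1/r_1'$ root this gives $2^{(k-j)a(1-r/r_1')}\MO_r h(x)$, with $a\downarrow\Qdim/r$.

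\textbf{The case $p>1$.} Your treatment does not work as stated: H\"older with $r_1=1$ puts the $u$-piece in $L^\infty$. The paper does not separate $G(u)(y)$ from the scale-$k$ piece there. It bounds the fine-frequency term by $\MO(G(u)A_ku)+G(u)\MO(A_ku)$, as in \eqref{eq:nonlinear-good-range}. This has no scale loss (the factor is $2^{-s(k-j)}$), and H\"older is applied only after the $L^p$-boundedness of $\MO$, which is where $p>1$ enters.

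\textbf{Misplaced obstacle.} The difficulty you flag, points $y$ near a reflected copy $\sigma x$, belongs to the low-frequency range, not to $k>j$. It costs nothing there: for $k\le j$ one has $\min\{1,2^k|x-y|\}(1+2^j|x-y|)^{-2}\lesssim2^{-(j-k)}$ for every $y$, which is how the paper uses the Euclidean factor. The $\Qdim$ in \eqref{eq:B-chain-condition-indices} comes from the doubling volume ratios in the fine-to-coarse step, not from reflections.
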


For this section put
\[
D_j:=P_j^2.
\]
By \eqref{eq:square-partition},
\[u=\sum_{j\in\Z}D_ju\]
in $L^2$ and in $\Scirc'$. Let
\[
A_j u(x):=
\sum_{Q\in\Dcal_{j+J}}w(Q)^{-1/2}
|\ip{u}{\psi_Q}|\1_Q(x).
\]
By Theorem~\ref{thm:frame-spaces}, the norms are equivalent to those
of $\{2^{js}A_ju\}$.

\begin{lemma}\label{lem:Dj-frame-control}
Let $0<r\leq1$. Then
\begin{equation}\label{eq:D-size-frame}
 |D_ju(x)|\lesssim \MO_r(A_ju)(x),
\end{equation}
where $\MO_rh=[\MO(|h|^r)]^{1/r}$. Moreover, whenever $|x-y|\leq c2^{-j}$,
\begin{equation}\label{eq:D-local-one-sided}
 |D_ju(x)-D_ju(y)|
 \lesssim 2^j|x-y|\,\MO_r(A_ju)(x),
\end{equation}
and the same estimate holds with $x$ and $y$ interchanged. Consequently,
\begin{equation}\label{eq:D-difference-frame}
 |D_ju(x)-D_ju(y)|
 \lesssim \min\{1,2^j|x-y|\}
 \big[\MO_r(A_ju)(x)+\MO_r(A_ju)(y)\big].
\end{equation}
\end{lemma}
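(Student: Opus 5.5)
The approach is to write $D_j u$ as a frame expansion at the single scale $j$ and then use the localization of the frame elements from Proposition~\ref{prop:frame-properties}. By \eqref{eq:kernel-frame} and \eqref{eq:coefficient-sampling}, $D_j=P_j^2$ has kernel $\sum_{Q\in\Dcal_{j+J}}\psi_Q(x)\psi_Q(y)$. Hence
\[
 D_ju(x)=\sum_{Q\in\Dcal_{j+J}}\ip{u}{\psi_Q}\psi_Q(x).
\]
This holds first for $u\in L^2(dw)$, with unconditional convergence as in the proof of Theorem~\ref{thm:frame}. For $u\in\Scirc'$ it follows by duality; since $u\in\Sscr$ in the applications, the $L^2$ case already suffices.

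For \eqref{eq:D-size-frame}, insert the size bound \eqref{eq:frame-size}. Put $b_Q=w(Q)^{-1/2}|\ip{u}{\psi_Q}|$, so that $\sum_Q b_Q\1_Q=A_ju$ because $\Dcal_{j+J}$ is a partition. This gives
\[
 |D_ju(x)|\lesssim\sum_{Q\in\Dcal_{j+J}}b_Q\,(1+2^jd(x,x_Q))^{-M}.
\]
Taking $M>\Qdim/r$ in the first estimate of Lemma~\ref{lem:discrete} yields $|D_ju(x)|\lesssim\MO_r(A_ju)(x)$.

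For \eqref{eq:D-local-one-sided}, use \eqref{eq:frame-reg} term by term:
\[
 |D_ju(x)-D_ju(y)|\lesssim 2^j|x-y|\sum_Q b_Q\big[(1+2^jd(x,x_Q))^{-M}+(1+2^jd(y,x_Q))^{-M}\big].
\]
The only point needing care is that the right-hand side must involve only $x$. Since $|x-y|\le c2^{-j}$, we have $d(x,y)\le|x-y|\le c2^{-j}$. The triangle inequality for the orbit distance then gives $1+2^jd(x,x_Q)\le(1+c)(1+2^jd(y,x_Q))$, so the $y$-term is dominated by a constant times the $x$-term. Lemma~\ref{lem:discrete} again bounds the sum by $\MO_r(A_ju)(x)$. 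The triangle inequality for $d$ holds because $d$ is the quotient metric of $\R^N$ by the isometry group $G$. The argument is symmetric in $x$ and $y$, which gives the interchanged version.

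Finally, \eqref{eq:D-difference-frame} follows by cases. If $|x-y|\le c2^{-j}$, the bound comes from \eqref{eq:D-local-one-sided}, since then $2^j|x-y|\simeq\min\{1,2^j|x-y|\}$ up to the constant $c$. If $|x-y|>c2^{-j}$, the bound comes from $|D_ju(x)|+|D_ju(y)|$ and \eqref{eq:D-size-frame}, since then $\min\{1,2^j|x-y|\}\gtrsim1$.

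I expect the main obstacle to be justifying the pointwise frame expansion of $D_ju$ and its termwise use. For Schwartz $u$ the coefficients decay rapidly in $Q$. The series then converges absolutely and locally uniformly, and it agrees with the $L^2$ limit almost everywhere. By continuity of both sides it agrees everywhere. The remaining steps are direct applications of the stated estimates.
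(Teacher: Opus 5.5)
Your proposal is correct and follows the paper's proof essentially step for step. You expand $D_ju$ via \eqref{eq:kernel-frame} and apply \eqref{eq:frame-size} with Lemma~\ref{lem:discrete} for the size bound. You then use \eqref{eq:frame-reg} for the local difference, with the two orbit weights shown comparable (you justify this explicitly through the orbit triangle inequality). Finally you split into the cases $|x-y|\le c2^{-j}$ and $|x-y|>c2^{-j}$. Your extra care over the pointwise validity of the scale-$j$ expansion supplements what the paper leaves implicit; it can also be obtained directly by applying Theorem~\ref{thm:cubature} to $z\mapsto K_j(x,z)P_ju(z)$.
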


\begin{proof}
By \eqref{eq:kernel-frame},
\[
 D_ju=\sum_{Q\in\Dcal_{j+J}}\ip{u}{\psi_Q}\psi_Q.
\]
Using \eqref{eq:frame-size} and Lemma~\ref{lem:discrete} with
$b_Q=w(Q)^{-1/2}|\ip{u}{\psi_Q}|$ gives
\eqref{eq:D-size-frame}.

For $|x-y|\leq c2^{-j}$, \eqref{eq:frame-reg} gives
\begin{align*}
 |D_ju(x)-D_ju(y)|
 &\lesssim2^j|x-y|
 \sum_Qw(Q)^{-1/2}|\ip{u}{\psi_Q}|\\
 &\qquad\times\big[(1+2^jd(x,x_Q))^{-M}
 +(1+2^jd(y,x_Q))^{-M}\big].
\end{align*}
As $d(x,y)\lesssim2^{-j}$, the two weights are comparable. Apply
Lemma~\ref{lem:discrete} and interchange $x,y$ to obtain
\eqref{eq:D-local-one-sided}. For $|x-y|>c2^{-j}$ use
\eqref{eq:D-size-frame}; together these give \eqref{eq:D-difference-frame}.

This completes our proof.
\end{proof}

\begin{lemma}\label{lem:chain-kernel-tools}
Let $D_j(x,y)$ be the kernel of $D_j=P_j^2$. For every $M>0$,
\begin{equation}\label{eq:Dj-kernel-majorant}
 |D_j(x,y)|\lesssim
 \frac{(1+2^jd(x,y))^{-M}(1+2^j|x-y|)^{-2}}
 {w(B(x,2^{-j}+d(x,y)))}.
\end{equation}
For every nonnegative locally integrable $H$,
\begin{align}
 \int |D_j(x,y)|H(y)\,dw(y)&\lesssim\MO H(x),
 \label{eq:Dj-max-int}\\
 \int 2^j|x-y||D_j(x,y)|H(y)\,dw(y)&\lesssim\MO H(x).
 \label{eq:Dj-moment-int}
\end{align}
If $k\geq j$, $Q\in\Dcal_{k+J}$, $1\leq\beta<\infty$, and $M_0>0$ is fixed in advance, then, for $M$ large enough,
\begin{align}
 &\Big[\int |D_j(x,y)|(1+2^kd(y,x_Q))^{-M\beta}\,dw(y)\Big]^{1/\beta}
 \notag\\
 &\quad\lesssim
 \Big[\frac{w(Q)}{w(B(x_Q,2^{-j}))}\Big]^{1/\beta}
 (1+2^jd(x,x_Q))^{-M_0}.
 \label{eq:coarse-fine-convolution}
\end{align}
In particular, for $1<\theta<\infty$ we may take $\beta=\theta'=\theta/(\theta-1)$, as well as the endpoint $\beta=1$.
\end{lemma}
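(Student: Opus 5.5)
The four assertions are treated in order, each reduced to Proposition~\ref{prop:kernel} and the elementary estimates \eqref{eq-elementary estimate}--\eqref{eq-elementary estimate 2}.

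\emph{The majorant \eqref{eq:Dj-kernel-majorant}.} Since $D_j=P_j^2$ is the Dunkl multiplier with symbol $\Phi(2^{-j}|\xi|)^2$, and $\Phi^2$ is a smooth annular cutoff (its even extension is Schwartz), we apply \eqref{eq:kernel-size} with $\vartheta=\Phi^2$ and $t=2^{-j}$. The factor $(t/(t+d))^M$ equals $(1+2^jd(x,y))^{-M}$. This gives the majorant directly.

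\emph{The bounds \eqref{eq:Dj-max-int} and \eqref{eq:Dj-moment-int}.} For \eqref{eq:Dj-max-int}, doubling gives $w(B(x,2^{-j}+d(x,y)))\gtrsim w(B(x,2^{-j}))\ge w(B(x\wedge y,2^{-j}))$. We then apply \eqref{eq-elementary estimate} with $r=2^{-j}$ and $H=M>\Qdim$, discarding the factor $(1+2^j|x-y|)^{-2}\le1$. For \eqref{eq:Dj-moment-int}, the point of the extra Euclidean factor $(1+2^j|x-y|)^{-2}$ becomes visible: it absorbs the weight, since $2^j|x-y|(1+2^j|x-y|)^{-2}\le1$. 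What remains is the same integral as before. This is the only place where the Euclidean decay, rather than the orbit decay, is essential, because $|x-y|$ may be large while $d(x,y)$ is small.

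\emph{The two-scale estimate \eqref{eq:coarse-fine-convolution}.} Set $t=2^{-j}\ge2^{-k}$. We use $(1+2^kd(y,x_Q))^{-M\beta}\le(1+2^kd(y,x_Q))^{-M}$, so it suffices to bound
\[
I:=\int \frac{(1+2^jd(x,y))^{-M}}{w(B(x,t+d(x,y)))}(1+2^kd(y,x_Q))^{-M}\,dw(y)
\lesssim \frac{w(Q)}{w(B(x_Q,t))}(1+2^jd(x,x_Q))^{-M_0\beta},
\]
and then take the $1/\beta$ power. To do this we split $\R^N$ into two regions.

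On the region $d(y,x_Q)\le d(x,x_Q)/2$ we have $d(x,y)\ge d(x,x_Q)/2$. There the kernel factor is at most
\[
(1+2^jd(x,x_Q))^{-M+\Qdim}\,w(B(x_Q,t))^{-1},
\]
where doubling is used to move the volume from $x$ to $x_Q$ at the cost of $(1+d/t)^{\Qdim}$. The remaining integral $\int(1+2^kd(y,x_Q))^{-M}dw(y)$ is $\lesssim w(B(x_Q,2^{-k}))\simeq w(Q)$ by \eqref{eq-elementary estimate 2} at scale $2^{-k}$, using \eqref{eq:cube-ball}.

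On the complementary region we have $2^kd(y,x_Q)\ge 2^jd(x,x_Q)/2$. We extract the factor
\[
(1+2^jd(x,x_Q))^{-M/2}
\]
from the second weight and estimate the remaining kernel crudely by $w(B(x,t))^{-1}$. The leftover integral is again $\lesssim w(Q)$. It remains to compare $w(B(x,t))^{-1}$ with $w(B(x_Q,t))^{-1}(1+d(x,x_Q)/t)^{\Qdim}$, which holds by doubling in the orbit form $w(B^d(\cdot,r))\simeq w(B(\cdot,r))$.

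Choosing $M\ge 2(M_0\beta+\Qdim)$ closes both regions. The main care is in this volume bookkeeping. Taking the $1/\beta$ power then yields the factor $[w(Q)/w(B(x_Q,t))]^{1/\beta}$ together with the decay $(1+2^jd(x,x_Q))^{-M_0}$. The cases $\beta=\theta'$ and $\beta=1$ are special instances.
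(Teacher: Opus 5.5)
Your proof is correct. For \eqref{eq:Dj-kernel-majorant}--\eqref{eq:Dj-moment-int} you follow the paper. The paper re-derives \eqref{eq:Dj-max-int} by summing over orbit annuli $E_\ell$ instead of citing \eqref{eq-elementary estimate}, but that is the same computation.

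For the two-scale estimate \eqref{eq:coarse-fine-convolution} your route differs. The paper does not split the domain. With $a=d(x,y)/t_j$, $b=d(y,z)/t_k$, $\delta=d(x,z)/t_j$, it uses $1+\delta\lesssim(1+a)(1+b)$ (valid since $t_k\le t_j$) to peel off $(1+\delta)^{-A_0\beta}$ pointwise. It then moves the volume from $x$ to $z$ via $w(B(z,t_j))\le w(B(x,R_x+d(y,z)))$, paying $(1+b)^{\Qdim}$. So the paper charges the volume transfer to the fine-scale weight, which is then integrated directly.

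You instead use the near/far split, $d(y,x_Q)\le d(x,x_Q)/2$ versus its complement. You extract the $(1+2^jd(x,x_Q))$-decay from the kernel on the near region and from the fine-scale weight on the far region. You charge the volume transfer to $(1+2^jd(x,x_Q))^{\Qdim}$, which the extracted decay absorbs once $M\ge 2(M_0\beta+\Qdim)$. On your near region the transfer actually costs nothing, because there $t+d(x,x_Q)\le 2(t+d(x,y))$.

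The paper's version is a single pointwise inequality and keeps the full exponent $M\beta$ throughout. Yours lowers $M\beta$ to $M$ at the outset and is a little longer, but more transparent. Both need $M$ large only in terms of $M_0\beta$ and $\Qdim$.
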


\begin{proof}
Proposition~\ref{prop:kernel} gives
\eqref{eq:Dj-kernel-majorant}. For \eqref{eq:Dj-max-int}, put $t=2^{-j}$ and decompose $\R^N$ into the orbit annuli
\[
 E_0=B^d(x,2t),\qquad
 E_\ell=B^d(x,2^{\ell+1}t)\setminus B^d(x,2^\ell t),\quad \ell\geq1.
\]
On $E_\ell$, \eqref{eq:Dj-kernel-majorant}, doubling, and the comparison of
Euclidean and orbit balls give, after choosing the orbit-decay order larger
than $\Qdim+2$,
\[
 \int_{E_\ell}|D_j(x,y)|H(y)\,dw(y)
 \lesssim 2^{-\ell(M-\Qdim)}
 \frac1{w(B^d(x,C2^{\ell+1}t))}
 \int_{B^d(x,C2^{\ell+1}t)}H(y)\,dw(y).
\]
Summing in $\ell$ proves \eqref{eq:Dj-max-int}.  For
\eqref{eq:Dj-moment-int} we use
\[
 (|x-y|/t)(1+|x-y|/t)^{-2}\lesssim1
\]
and repeat the same orbit-annular argument with a slightly larger value of
$M$.

For \eqref{eq:coarse-fine-convolution}, put
$t_j=2^{-j}$, $t_k=2^{-k}$ and $z=x_Q$; thus $t_k\leq t_j$.  We claim first
that, for arbitrary $A_0>0$, after taking the two decay exponents in
\eqref{eq:Dj-kernel-majorant} and in the second factor sufficiently large,
\begin{equation}\label{eq:coarse-fine-L1}
 \int |D_j(x,y)|(1+d(y,z)/t_k)^{-M\beta}\,dw(y)
 \lesssim
 \frac{w(B(z,t_k))}{w(B(z,t_j))}
 (1+d(x,z)/t_j)^{-A_0\beta}.
\end{equation}
Write
\[
 a=\frac{d(x,y)}{t_j},\qquad
 b=\frac{d(y,z)}{t_k},\qquad
 \delta=\frac{d(x,z)}{t_j}.
\]
Because $t_k\leq t_j$ and $d$ satisfies the triangle inequality,
$1+\delta\lesssim(1+a)(1+b)$.  Hence, for any prescribed $A_0$,
\[
 (1+a)^{-A}(1+b)^{-M\beta}
 \lesssim (1+\delta)^{-A_0\beta}
 (1+a)^{-(A-A_0\beta)}
 (1+b)^{-(M-A_0)\beta}.
\]
We move the volume term from $x$ to $z$.  Put $R_x=t_j+d(x,y)$. The orbit triangle inequality gives
$d(x,z)\leq d(x,y)+d(y,z)$. Choose $\sigma\in G$ such that
$|x-\sigma z|=d(x,z)$. By $G$-invariance of $dw$,
\[
 w(B(z,t_j))=w(B(\sigma z,t_j))
 \leq w\bigl(B(x,R_x+d(y,z))\bigr).
\]
Doubling, applied at the fixed center $x$, now gives
\[
 \frac1{w(B(x,R_x))}
 \lesssim
 \frac{(1+d(y,z)/R_x)^{\Qdim}}{w(B(z,t_j))}
 \leq
 \frac{(1+b)^{\Qdim}}{w(B(z,t_j))}.
\]
Here $G$-invariance is used to compare reflected centers.
After these two reductions, the left-hand side of
\eqref{eq:coarse-fine-L1} is bounded by
\[
 \frac{(1+\delta)^{-A_0\beta}}{w(B(z,t_j))}
 \int (1+d(y,z)/t_k)^{-H}\,dw(y)
\]
with $H>\Qdim$ if $A$ and $M$ were chosen sufficiently large.  Decomposing
into the orbit annuli centered at $z$ at scale $t_k$ yields
\[
 \int (1+d(y,z)/t_k)^{-H}\,dw(y)\lesssim w(B^d(z,t_k))
 \simeq w(B(z,t_k)),
\]
which proves \eqref{eq:coarse-fine-L1}.

Finally, $w(B(z,t_k))\simeq_J w(Q)$.  Taking the $\beta$-th root in
\eqref{eq:coarse-fine-L1}, with $A_0=M_0$ before
renaming the arbitrary decay order, proves
\eqref{eq:coarse-fine-convolution}.  The estimate also holds for $\beta=1$.

This completes our proof.
\end{proof}

We use the following dyadic estimate, cf.~\cite{BuiBuiDuongChain}.

\begin{lemma}\label{lem:nonlinear-dyadic}
Assume \eqref{eq:chain-condition}. Let $0<r\leq1$ and $1<\theta<\infty$. Then, for Schwartz $u$ and every $j\in\Z$,
\begin{align}
 |D_j(F(u))(x)|
 &\lesssim \MO(G(u))(x)
 \sum_{k<j}2^{-(j-k)}\MO_r(A_ku)(x)\notag\\
 &\quad+
 \MO_\theta(G(u))(x)
 \sum_{k\geq j}
 2^{\Qdim(k-j)(1/r-1+1/\theta)}
 \MO_r(A_ku)(x),
 \label{eq:nonlinear-dyadic}
\end{align}
where $\MO_\theta h=[\MO(|h|^\theta)]^{1/\theta}$.
\end{lemma}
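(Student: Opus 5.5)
We expand $F(u)$ through the Parseval frame of Theorem~\ref{thm:frame} for $u$, and use the fact that $D_j$ annihilates constants, as in the chain-rule argument of \cite{BuiBuiDuongChain}. By Proposition~\ref{prop:kernel} with $\vartheta=\Phi^2$ vanishing at zero, $\int D_j(x,y)\,dw(y)=0$. Therefore
\[
 D_j(F(u))(x)=\int D_j(x,y)\,[F(u(y))-F(u(x))]\,dw(y).
\]
By \eqref{eq:chain-condition},
\[
 |D_j(F(u))(x)|\le\int|D_j(x,y)|\,[G(u(x))+G(u(y))]\,|u(y)-u(x)|\,dw(y).
\]
Since $u$ is Schwartz, we may write $u(y)-u(x)=\sum_k[D_ku(y)-D_ku(x)]$, with the sum converging pointwise; the constants drop out in differences. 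This gives
\[
 |D_j(F(u))(x)|\le\sum_{k}I_k,\qquad I_k:=\int|D_j(x,y)|\,[G(u(x))+G(u(y))]\,|D_ku(y)-D_ku(x)|\,dw(y).
\]

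\emph{Low frequencies, $k<j$.} We apply \eqref{eq:D-local-one-sided} in its one-sided form with the base point $x$. For $|x-y|\le c2^{-k}$ this gives
\[
 |D_ku(y)-D_ku(x)|\lesssim 2^k|x-y|\,\MO_r(A_ku)(x).
\]
Moreover $2^k|x-y|=2^{-(j-k)}\,2^j|x-y|$. The factor $\MO_r(A_ku)(x)$ then comes out of the integral, and \eqref{eq:Dj-moment-int} applied with $H=G(u(x))+G(u)$ bounds what remains by $2^{-(j-k)}\MO(G(u))(x)$. Here we use $G(u(x))\le\MO(G(u))(x)$ at Lebesgue points.

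When $|x-y|>c2^{-k}$ we cannot use the one-sided estimate, and the term $\MO_r(A_ku)(y)$ appears. I would avoid this by returning to the frame expansion $D_ku=\sum_Q\ip{u}{\psi_Q}\psi_Q$. The regularity estimate \eqref{eq:frame-reg} holds for all $x,y$ once it is multiplied by $\min\{1,2^k|x-y|\}\le 2^k|x-y|$, since for large separation the size bound \eqref{eq:frame-size} suffices. The decay $(1+2^kd(y,x_Q))^{-M}$ is then absorbed using $1+2^kd(x,x_Q)\lesssim(1+2^kd(x,y))(1+2^kd(y,x_Q))$, at the cost of a factor $(1+2^jd(x,y))^{M}$. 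This factor is harmless against the arbitrary decay of $D_j(x,y)$, because $2^k\le2^j$. Lemma~\ref{lem:discrete} then yields $\MO_r(A_ku)(x)$, and this gives the first sum in \eqref{eq:nonlinear-dyadic}.

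\emph{High frequencies, $k\ge j$.} We bound $|D_ku(y)-D_ku(x)|\le|D_ku(y)|+|D_ku(x)|$.

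The term with $D_ku(x)$ is handled by \eqref{eq:D-size-frame} and \eqref{eq:Dj-max-int}. It gives $\MO(G(u))(x)\,\MO_r(A_ku)(x)$, which is dominated by the second sum because $\MO\le\MO_\theta$ and the exponent $\Qdim(k-j)(1/r-1+1/\theta)$ is nonnegative.

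For the term with $D_ku(y)$ we expand
\[
 |D_ku(y)|\le\sum_{Q\in\Dcal_{k+J}}b_Q\,(1+2^kd(y,x_Q))^{-M},\qquad b_Q:=w(Q)^{-1/2}|\ip{u}{\psi_Q}|,
\]
using \eqref{eq:frame-size}. For each $Q$ we apply H\"older's inequality in $y$ with respect to the measure $|D_j(x,y)|\,dw(y)$, with exponents $\theta$ and $\theta'$. The $G$ factor gives $\big(\int|D_j(x,y)|\,G(u(y))^\theta\,dw(y)\big)^{1/\theta}\lesssim\MO_\theta(G(u))(x)$ by \eqref{eq:Dj-max-int}; the constant $G(u(x))$ is handled in the same way. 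The remaining factor is controlled by \eqref{eq:coarse-fine-convolution} with $\beta=\theta'$, namely
\[
 \Big[\frac{w(Q)}{w(B(x_Q,2^{-j}))}\Big]^{1-1/\theta}(1+2^jd(x,x_Q))^{-M_0}.
\]
Summing over $Q$ with \eqref{eq:two-scale-discrete}, taken with $\lambda=1-1/\theta$, produces
\[
 2^{\Qdim(k-j)(1/r-1+1/\theta)}\,\MO_r(A_ku)(x),
\]
which is exactly the second sum in \eqref{eq:nonlinear-dyadic}.

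The main obstacle is the low-frequency regime with $|x-y|\gtrsim2^{-k}$: \eqref{eq:D-difference-frame} introduces a maximal function at $y$, which cannot be integrated against $G(u(y))$ without losing the clean $\MO(G(u))$ factor. Passing back to the frame elements and transferring the decay to base point $x$, as described above, is the step I expect to require the most care.
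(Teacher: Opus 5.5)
Your proposal is correct and follows essentially the same route as the paper. Both arguments use the cancellation of $D_j$ together with \eqref{eq:chain-condition}, and then insert $u=\sum_kD_ku$. For $k<j$ near $x$, both use the one-sided estimate \eqref{eq:D-local-one-sided} with \eqref{eq:Dj-moment-int}. In the far low-frequency region, both return to the frame expansion, transfer the orbit decay to $x$, and apply Lemma~\ref{lem:discrete}. For $k\ge j$, both use H\"older with $\theta,\theta'$ together with \eqref{eq:coarse-fine-convolution} and \eqref{eq:two-scale-discrete} at $\lambda=1-1/\theta$.

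The only difference is cosmetic. On $\{|x-y|>2^{-k}\}$ the paper bounds $D_ku(x)$ and $D_ku(y)$ separately by size and gains $2^{-2(j-k)}$ from the Euclidean factor $(1+2^j|x-y|)^{-2}$. You instead use the bound $\min\{1,2^k|x-y|\}$ for frame-element differences, which yields the factor $2^{-(j-k)}$; this is still sufficient.
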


\begin{proof}
Since $D_j$ has cancellation,
\begin{align}
 |D_j(F(u))(x)|
 &\leq G(u)(x)\int |D_j(x,y)||u(y)-u(x)|\,dw(y)\notag\\
 &\quad+\int |D_j(x,y)|G(u)(y)|u(y)-u(x)|\,dw(y)\notag\\
 &=:E_j(x)+H_j(x). \label{eq:EjHj}
\end{align}
Insert $u=\sum_kD_ku$. We estimate $H_j$; the bound for $E_j$
follows similarly.

\smallskip
\noindent\emph{Low frequencies: $k<j$.}
Split the $y$-integral into
$\Omega_1=\{|x-y|\leq2^{-k}\}$ and
$\Omega_2=\{|x-y|>2^{-k}\}$.
On $\Omega_1$, the one-sided estimate \eqref{eq:D-local-one-sided} at scale
$k$ gives
\[
 |D_ku(y)-D_ku(x)|
 \lesssim2^k|x-y|\,\MO_r(A_ku)(x).
\]
Hence \eqref{eq:Dj-moment-int} yields
\begin{align}
 &\int_{\Omega_1}|D_j(x,y)|G(u)(y)|D_ku(y)-D_ku(x)|\,dw(y)\notag\\
 &\qquad\lesssim2^{k-j}\MO_r(A_ku)(x)\MO(G(u))(x).
 \label{eq:low-near}
\end{align}

On $\Omega_2$, use
$|D_ku(y)-D_ku(x)|\leq|D_ku(y)|+|D_ku(x)|$.
For $D_ku(x)$, \eqref{eq:D-size-frame}--\eqref{eq:Dj-max-int} apply, and
\[
 (1+2^j|x-y|)^{-2}\lesssim2^{-2(j-k)}
 \qquad (y\in\Omega_2),
\]
so that contribution is at most
\[
 C2^{-2(j-k)}\MO_r(A_ku)(x)\MO(G(u))(x).
\]
For $D_ku(y)$, use
\[
 |D_ku(y)|\lesssim\sum_{Q\in\Dcal_{k+J}}b_Q
 (1+2^kd(y,x_Q))^{-M},
 \qquad b_Q=w(Q)^{-1/2}|\ip{u}{\psi_Q}|.
\]
By the triangle inequality for $d$,
\[
 (1+2^kd(y,x_Q))^{-M}
 \lesssim (1+2^kd(x,x_Q))^{-M/2}(1+2^kd(x,y))^{M/2}.
\]
Choose the orbit-decay exponent in \eqref{eq:Dj-kernel-majorant} much larger
than $M$. Since $k<j$, the factor $(1+2^kd(x,y))^{M/2}$ is absorbed by the
scale-$j$ orbit decay. On $\Omega_2$ the Euclidean factor contributes $2^{-2(j-k)}$;
hence
\begin{align*}
 &\int_{\Omega_2}|D_j(x,y)|G(u)(y)|D_ku(y)|\,dw(y)\\
 &\quad\lesssim2^{-2(j-k)}\MO(G(u))(x)
 \sum_Qb_Q(1+2^kd(x,x_Q))^{-M/2}\\
 &\quad\lesssim2^{-2(j-k)}\MO(G(u))(x)\MO_r(A_ku)(x),
\end{align*}
where the last step is Lemma~\ref{lem:discrete}. Together with
\eqref{eq:low-near}, this gives the first line of
\eqref{eq:nonlinear-dyadic} after summing over $k<j$.

\smallskip
\noindent\emph{Fine frequencies: $k\geq j$.}
The piece containing $D_ku(x)$ satisfies
\[
 |D_ku(x)|\int|D_j(x,y)|G(u)(y)\,dw(y)
 \lesssim\MO_r(A_ku)(x)\MO(G(u))(x)
\]
by \eqref{eq:D-size-frame} and \eqref{eq:Dj-max-int}. Since
$1/r-1+1/\theta>0$, this is dominated by the corresponding term in the second
line of \eqref{eq:nonlinear-dyadic}.

For $D_ku(y)$, apply the frame expansion, H\"older's inequality with
$\theta,\theta'$, and Lemma~\ref{lem:chain-kernel-tools}:
\begin{align*}
 &\int |D_j(x,y)|G(u)(y)(1+2^kd(y,x_Q))^{-M}\,dw(y)\\
 &\quad\lesssim\MO_\theta(G(u))(x)
 \Big[\frac{w(Q)}{w(B(x_Q,2^{-j}))}\Big]^{1/\theta'}
 (1+2^jd(x,x_Q))^{-M_0}.
\end{align*}
Summing over $Q$ and applying Lemma~\ref{lem:discrete} with
$\lambda=1/\theta'=1-1/\theta$ yields
\begin{align}
 &\int |D_j(x,y)|G(u)(y)|D_ku(y)|\,dw(y)\notag\\
 &\qquad\lesssim
 2^{\Qdim(k-j)(1/r-1+1/\theta)}
 \MO_\theta(G(u))(x)\MO_r(A_ku)(x).
 \label{eq:fine-H}
\end{align}
For $E_j$, the same calculation is performed without the factor $G(u)(y)$;
using the endpoint $\beta=1$ of
\eqref{eq:coarse-fine-convolution} gives the smaller factor $2^{\Qdim(k-j)(1/r-1)}$, which is absorbed by the right-hand side
of \eqref{eq:fine-H}. This proves \eqref{eq:nonlinear-dyadic}.

For the Banach range, keep $G(u)$ and the scale-$k$ term together
and take $r=1$. The fine-frequency term is bounded by
\begin{equation}\label{eq:nonlinear-good-range}
 \sum_{k\geq j}
 \big[\MO(G(u)A_ku)(x)+G(u)(x)\MO(A_ku)(x)\big].
\end{equation}
After multiplication by $2^{js}$, the sum $k\geq j$ has the summable factor
$2^{-s(k-j)}$. This is the Banach-range form used below.

This completes our proof.
\end{proof}

\begin{proof}
Assume first \eqref{eq:F-chain-condition-indices}. We may choose
\[
 0<r<\min\{p_2,q,1\},\qquad1<\theta<p_1
\]
(with $\theta$ arbitrarily large if $p_1=\infty$) so close to the endpoints
that
\[\alpha:=s-\Qdim(1/r-1+1/\theta)>0.\]
Multiplying \eqref{eq:nonlinear-dyadic} by $2^{js}$ gives the scale factors
\[
 2^{-(1-s)(j-k)}\quad(k<j),
 \qquad
 2^{-\alpha(k-j)}\quad(k\geq j).
\]
Discrete convolution in $\ell^q$ and H\"older's inequality give
\begin{align*}
 &\Big\|\Big(\sum_j[2^{js}|D_jF(u)|]^q\Big)^{1/q}\Big\|_p\\
 &\quad\lesssim\|G(u)\|_{p_1}
 \Big\|\Big(\sum_k[2^{ks}\MO_r(A_ku)]^q\Big)^{1/q}\Big\|_{p_2}.
\end{align*}
Since $r<\min\{p_2,q\}$ when $q<\infty$, Fefferman--Stein gives
\[
 \Big\|\Big(\sum_k[2^{ks}\MO_r(A_ku)]^q\Big)^{1/q}\Big\|_{p_2}
 \lesssim
 \Big\|\Big(\sum_k[2^{ks}A_ku]^q\Big)^{1/q}\Big\|_{p_2}.
\]
Theorem~\ref{thm:frame-spaces} identifies the last quantity with
$\|u\|_{\dotF^{s}_{p_2,q}(dw)}$, while the cutoff-independence argument from
\cite[Theorem~3.4]{BuiBuiDuongFMS}, applied to the annular partition
$D_j=P_j^2$, identifies the left side with
$\|F(u)\|_{\dotF^{s}_{p,q}(dw)}$.

If $\min\{p,q\}>1$, then $p_2>1$. By the Banach-range estimate
\eqref{eq:nonlinear-good-range}, the scale kernels are
$2^{-(1-s)(j-k)}$ and $2^{-s(k-j)}$. The vector-valued maximal
inequality and H\"older's inequality complete the proof for $0<s<1$.

This completes our proof.
\end{proof}

\begin{proof}
Assume first \eqref{eq:B-chain-condition-indices}. Choose
\[
 0<r<\min\{p_2,1\},\qquad1<\theta<p_1
\]
so that $\alpha=s-\Qdim(1/r-1+1/\theta)>0$. After multiplication by
$2^{js}$, Lemma~\ref{lem:nonlinear-dyadic} produces the summable scale kernels
\[
 2^{-(1-s)(j-k)}\1_{k<j},\qquad
 2^{-\alpha(k-j)}\1_{k\geq j}.
\]
Take the $L^p$ quasi-norm first. H\"older's inequality, boundedness of
$\MO_r$ on $L^{p_2}$ and of $\MO_\theta$ on $L^{p_1}$, and then the discrete
convolution inequality in $\ell^q$ give
\[
 \|F(u)\|_{\dotB^{s}_{p,q}(dw)}
 \lesssim\|G(u)\|_{p_1}
 \Big(\sum_k[2^{ks}\|A_ku\|_{p_2}]^q\Big)^{1/q}.
\]
The final factor is equivalent to $\|u\|_{\dotB^{s}_{p_2,q}(dw)}$ by
Theorem~\ref{thm:frame-spaces}.

If $p>1$, then $p_2>1$. Use \eqref{eq:nonlinear-good-range} and the
summable factors $2^{-(1-s)(j-k)}$, $2^{-s(k-j)}$. The scalar maximal
inequality and H\"older's inequality complete the proof.

This completes our proof.
\end{proof}

\begin{remark}
In the first-order comparison range of \cite[Theorem~1.1]{BuiDunkl}, Theorems~\ref{thm:chain-F} and~\ref{thm:chain-B} also give chain rules on the spaces of homogeneous type $(\R^N,|\cdot|,dw)$. Outside that first-order range the statements still make sense in the transform/operator scale, which is defined for all $s$ and for the admissible ranges of $p,q$.

The lower-smoothness condition above is the one obtained directly from the wavelet proof, where the two-scale discrete estimate uses $0<r\leq1$. In the overlap with the homogeneous-type theorem of \cite{BuiBuiDuongChain}, one may use the sharper admissible condition established there. We keep the present proof self-contained and do not use that sharper condition here.
\end{remark}

\subsection{Nonlinear wavelet approximation}\label{subsec:nonlinear-approx}
We prove a Jackson estimate for the frame $\Psi$, using the local volumes $w(Q)$.

Let $s>0$, $1\le p<\infty$, and choose $0<\tau<p$ so that
\begin{equation}\label{eq:jackson-indices}
 \frac1\tau=\frac{s}{\Qdim}+\frac1p.
\end{equation}
Define the volume-adapted \emph{frame coefficient} Besov space
$\widetilde{\dot B}^{s}_{\tau,\tau}(dw;\Psi)$ to consist of
$f\in\Scirc'$ such that
\begin{equation}\label{eq:jackson-besov-norm}
 \|f\|_{\widetilde{\dot B}^{s}_{\tau,\tau}(dw;\Psi)}
 :=\bigg(\sum_{j\in\Z}\sum_{Q\in\Dcal_{j+J}}
 \big[ w(Q)^{-s/\Qdim+1/\tau-1/2}
 |\ip{f}{\psi_Q}|\big]^\tau\bigg)^{1/\tau}<\infty.
\end{equation}
Since $w(Q)\simeq w(B(x_Q,2^{-j}))$, the factor $w(Q)^{-s/\Qdim}$
measures smoothness through local volume.  In view of
\eqref{eq:jackson-indices}, the norm in \eqref{eq:jackson-besov-norm}
can also be written as
\begin{equation}\label{eq:jackson-besov-coeff}
 \|f\|_{\widetilde{\dot B}^{s}_{\tau,\tau}(dw;\Psi)}
 =\bigg(\sum_{j,Q}
 [|\ip{f}{\psi_Q}|w(Q)^{1/p-1/2}]^\tau\bigg)^{1/\tau}.
\end{equation}
We do not claim an intrinsic Littlewood--Paley characterization.

For $n\ge1$ set
\[
 \sigma_n(f)_p:=\inf_{\substack{\Lambda\subset\Psi,\,\#\Lambda\le n\\
                  (a_\psi)_{\psi\in\Lambda}\subset\C}}
 \Big\|f-\sum_{\psi\in\Lambda}a_\psi\psi\Big\|_{L^p(dw)}.
\]
We adapt \cite[Lemma~6.17]{KerkyacharianPetrushev} to the Dunkl frame.

\begin{lemma}\label{lem:jackson-finite-sum}
Let $1\le p<\infty$. For any finite set $E$ of frame indices and scalars
$(a_Q)_{Q\in E}$,
\begin{equation}\label{eq:jackson-finite-sum}
 \Big\|\sum_{Q\in E}a_Q\psi_Q\Big\|_{L^p(dw)}
 \lesssim (\#E)^{1/p}
 \sup_{Q\in E}\big\{|a_Q|w(Q)^{1/p-1/2}\big\}.
\end{equation}
The constant is independent of $E$ and $a$.
\end{lemma}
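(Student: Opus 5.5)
Set $\lambda:=\sup_{Q\in E}|a_Q|w(Q)^{1/p-1/2}$. The plan follows the Kerkyacharian--Petrushev argument: bound the sum pointwise by a sum of local bumps, then estimate that sum in $L^p$ with a level-set counting argument.

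\emph{Pointwise bound.} By \eqref{eq:frame-size} and the fact that $d(x,x_Q)\le|x-x_Q|$, for $Q\in\Dcal_{j+J}$ we have
\[
 |a_Q\psi_Q(x)|\lesssim\lambda\,w(Q)^{-1/p}\big(1+2^jd(x,x_Q)\big)^{-M}.
\]
Write $\mathcal O(x_Q)$ for the $G$-orbit of $x_Q$. The orbit ball $B^d(x_Q,2^{-j})$ is contained in the union of at most $|G|$ Euclidean balls, and $w$ is $G$-invariant. Hence the profile $\phi_Q(x):=w(Q)^{-1/p}(1+2^jd(x,x_Q))^{-M}$ is essentially a sum of at most $|G|$ Euclidean bumps centered at the reflected points $\sigma x_Q$, each at scale $2^{-j}$ and of height $w(B(\sigma x_Q,2^{-j}))^{-1/p}\simeq w(Q)^{-1/p}$ (by \eqref{eq:cube-ball}). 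It therefore suffices to prove
\[
 \Big\|\sum_{Q\in E}\phi_Q\Big\|_{L^p(dw)}\lesssim(\#E)^{1/p}.
\]
The case $p=1$ is immediate, because each $\|\phi_Q\|_1\lesssim1$ by \eqref{eq-elementary estimate 2}. For $1<p<\infty$ we need an argument that does not lose a factor of $\#E$.

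\emph{Level-set argument.} The standard device is as follows. Fix $x$ and order the cubes by the size of $w(Q)^{-1/p}$. Let $Q_x$ be a cube in $E$ with $x\in3\mathcal O(Q_x)$ that maximizes the height $w(Q)^{-1/p}$; if no such cube exists, take the one maximizing $\phi_Q(x)$. We then aim for the pointwise domination
\[
 \sum_{Q\in E}\phi_Q(x)\lesssim\sup_{Q\in E}w(Q)^{-1/p}\big(1+2^{j_Q}d(x,x_Q)\big)^{-M'}
\]
for some smaller $M'$. Once this holds, $\|\sum\phi_Q\|_p^p\le\sum_Q\|\tilde\phi_Q\|_p^p\lesssim\#E$, since $\|\sup_Q\tilde\phi_Q\|_p^p\le\sum_Q\|\tilde\phi_Q\|_p^p$ and each $\|\tilde\phi_Q\|_p^p\lesssim1$ by the volume normalization and \eqref{eq-elementary estimate 2}.

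\emph{Proving the domination.} Split the sum by scale. At a fixed scale $j$ the cubes are disjoint, so $\sum_{Q\in\Dcal_{j+J}}(1+2^jd(x,x_Q))^{-M}\lesssim1$ (this is Lemma~\ref{lem:discrete} with $b_Q=1$, or a direct count). Each scale's contribution is then bounded by a constant times its largest term. Across scales, the claim is that terms with $w(Q)^{-1/p}$ below the maximum decay geometrically. Doubling gives $w(B(x,2^{-j}))\gtrsim2^{N(k-j)}w(B(x,2^{-k}))$ for $j<k$, so heights at coarser scales near $x$ are smaller by a factor $2^{-N(k-j)/p}$. Terms far away in the orbit metric are handled by the decay, using \eqref{eq:volume} to compare $w(Q)$ with $w(B(x,2^{-j}+d(x,x_Q)))$.

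This is the main obstacle: a plain geometric-series bound over scales overcounts when many cubes of different sizes sit near $x$. I would first try to handle it by replacing the pointwise domination with a distributional estimate. For each dyadic level $2^m$, consider the set $\{\sum\phi_Q>2^m\}$ and show it is covered, up to the tails, by the orbit-dilates $C\mathcal O(Q)$ of those $Q$ with $w(Q)^{-1/p}\gtrsim2^{m}$. Summing $2^{mp}\sum_{Q:\,w(Q)\lesssim2^{-mp}}w(Q)$ over $m$ then gives $\lesssim\#E$.
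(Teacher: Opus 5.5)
There is a genuine gap. Your reduction to $\|\sum_{Q\in E}\phi_Q\|_{L^p(dw)}\lesssim(\#E)^{1/p}$ and the case $p=1$ are fine. For $1<p<\infty$, however, which is the whole content of the lemma, the decisive estimate is never proved.

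The pointwise domination $\sum_Q\phi_Q(x)\lesssim\sup_Q\tilde\phi_Q(x)$, with $\tilde\phi_Q:=w(Q)^{-1/p}(1+2^{j_Q}d(\cdot,x_Q))^{-M'}$, is only stated as an aim. The sketch offered for it contains a false step: one scale's contribution is \emph{not} bounded by a constant times its largest term. Already for $\kappa=0$, let $E\cap\Dcal_{j+J}$ be the $\simeq R^N$ cubes with $R\le 1+2^jd(x,x_Q)<2R$. Then the scale-$j$ sum is $\simeq R^{N-M}$ times the common height, while its largest term is only $\simeq R^{-M}$ times it. For $\kappa\neq0$ the heights $w(Q)^{-1/p}$ also vary within a single scale.

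You then call the cross-scale interaction the main obstacle and switch to a level-set covering asserted only ``up to the tails''. The tails are the entire difficulty. With fixed dilates $C\mathcal O(Q)$ the covering is false: in the same example, once $R\gg C$ the point $x$ lies in no $C\mathcal O(Q)$, yet $\sum_Q\phi_Q(x)$ exceeds levels $2^m$ far below every height. Making it work needs $m$-dependent dilates plus an argument of the same kind as the domination you left open. So the proposal stops exactly where the work begins.

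The paper removes the tails before any counting. It uses $(1+2^jd(x,x_Q))^{-M}\le\sum_{\sigma\in G}(1+2^j|x-\sigma x_Q|)^{-M}\lesssim\sum_{\sigma\in G}\M_t(\1_{\sigma Q})(x)$ for $0<t<1$ and $M>\Qdim/t$. The Fefferman--Stein inequality in $L^{p/t}(\ell^{1/t})$ then replaces the bumps by the indicators $\1_{\sigma Q}$. For fixed $\sigma$, the cubes $\sigma Q\ni x$ form a chain with one cube per generation. Since $w(R)/w(Q)\lesssim(\ell(R)/\ell(Q))^N$ for $R\subset Q$, the heights are geometric along the chain, so $\sum_Qw(Q)^{-1/p}\1_{\sigma Q}\lesssim\sup_Qw(Q)^{-1/p}\1_{\sigma Q}$, and its $p$-th power integrates to at most $\#E$.

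Your own route can also be completed, but it needs a threshold argument and a loss in the decay exponent:
\begin{enumerate}
\item Put $u_Q=1+2^{j_Q}d(x,x_Q)$ and $h_j=w(B(x,2^{-j}))^{-1/p}$. Doubling and $G$-invariance give $h_ju_Q^{-\Qdim/p}\lesssim w(Q)^{-1/p}\lesssim h_ju_Q^{\Qdim/p}$ for $Q\in\Dcal_{j+J}$.
\item With $S=\max_{Q\in E}\tilde\phi_Q(x)$, every $Q\in E\cap\Dcal_{j+J}$ satisfies $u_Q\ge U_j:=\max\{1,c(h_j/S)^{1/(M'+\Qdim/p)}\}$.
\item Counting cubes of $\Dcal_{j+J}$ in orbit shells bounds the scale-$j$ contribution by a constant times $h_jU_j^{N+\Qdim/p-M}$.
\item Since $h_k\gtrsim 2^{N(k-j)/p}h_j$ for $k>j$, the sum over $j$ is $\lesssim S$ provided $M>M'+N+2\Qdim/p$.
\item Choose $M'p>\Qdim$; then $\|\max_Q\tilde\phi_Q\|_p^p\le\sum_Q\|\tilde\phi_Q\|_p^p\lesssim\#E$ finishes the proof.
\end{enumerate}
This version avoids vector-valued maximal inequalities at the cost of a larger decay order, which \eqref{eq:frame-size} provides.
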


\begin{proof}
We use the Euclidean Hardy--Littlewood maximal operator
$\M_t h=[\M(|h|^t)]^{1/t}$, with $0<t<1$.
By \eqref{eq:frame-size} and the finiteness of $G$, for every sufficiently
large $M$,
\[
 |\psi_Q(x)|\lesssim w(Q)^{-1/2}
 \sum_{\sigma\in G}(1+2^j|x-\sigma x_Q|)^{-M}.
\]
For $Q\in\Dcal_{j+J}$, measure doubling and $G$-invariance show that
\begin{equation}\label{eq:jackson-max-domination}
 (1+2^j|x-\sigma x_Q|)^{-M}
 \lesssim\M_t(\1_{\sigma Q})(x),
 \qquad M>\Qdim/t.
\end{equation}
Indeed, a ball of radius $\simeq|x-\sigma x_Q|+2^{-j}$ contains
$\sigma Q$, and doubling bounds its measure by
$C(1+2^j|x-\sigma x_Q|)^{\Qdim}w(Q)$.
Consequently, the Fefferman--Stein vector-valued maximal inequality
(with exponents $p/t>1$ and $1/t>1$) gives
\begin{equation}\label{eq:jackson-vector}
 \Big\|\sum_{Q\in E}a_Q\psi_Q\Big\|_p
 \lesssim\sum_{\sigma\in G}
 \Big\|\sum_{Q\in E}
 |a_Q|w(Q)^{-1/2}\1_{\sigma Q}\Big\|_p.
\end{equation}
We claim that, for each $\sigma\in G$,
\begin{equation}\label{eq:jackson-nested}
 \Big\|\sum_{Q\in E}w(Q)^{-1/p}\1_{\sigma Q}\Big\|_p
 \lesssim (\#E)^{1/p}.
\end{equation}
The cubes $\sigma Q$ form a nested grid.
For dyadic cubes $R\subset Q$, the explicit volume estimate
\eqref{eq:volume} yields
\[
 \frac{w(R)}{w(Q)}\lesssim
 \Big(\frac{\ell(R)}{\ell(Q)}\Big)^N.
\]
The same holds for their images under $\sigma$, by $G$-invariance.
The selected cubes containing $x$ are nested. Thus
\[
 \sum_{Q\in E}w(Q)^{-1/p}\1_{\sigma Q}(x)
 \lesssim \sup_{Q\in E}w(Q)^{-1/p}\1_{\sigma Q}(x).
\]
Integrating its $p$th power and using
$w(\sigma Q)=w(Q)$ proves \eqref{eq:jackson-nested}.
Finally insert
$|a_Q|w(Q)^{-1/2}\le K w(Q)^{-1/p}$, with
$K=\sup_{Q\in E}|a_Q|w(Q)^{1/p-1/2}$,
into \eqref{eq:jackson-vector} and apply \eqref{eq:jackson-nested}.

This completes our proof.
\end{proof}

\begin{theorem}[Jackson estimate]\label{thm:jackson}
Let $s>0$, $1\le p<\infty$, and
$1/\tau=s/\Qdim+1/p$. Then
$\widetilde{\dot B}^{s}_{\tau,\tau}(dw;\Psi)$ embeds continuously into
$L^p(dw)$ and, for every $f$ in this space and $n\ge1$,
\begin{equation}\label{eq:jackson}
 \sigma_n(f)_p\lesssim n^{-s/\Qdim}
 \|f\|_{\widetilde{\dot B}^{s}_{\tau,\tau}(dw;\Psi)}.
\end{equation}
More precisely, \eqref{eq:jackson} holds for the approximation obtained
by retaining the $n$ largest quantities
$|\ip{f}{\psi_Q}|w(Q)^{1/p-1/2}$.
\end{theorem}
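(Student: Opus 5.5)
Set $b_Q:=|\ip{f}{\psi_Q}|\,w(Q)^{1/p-1/2}$ for every frame index, so that by \eqref{eq:jackson-besov-coeff} we have $\|f\|_{\widetilde{\dot B}^{s}_{\tau,\tau}(dw;\Psi)}=\|b\|_{\ell^\tau}$. The plan is the standard Kerkyacharian--Petrushev argument: decompose the coefficients into dyadic blocks according to the size of $b_Q$, apply Lemma~\ref{lem:jackson-finite-sum} to each block, and sum geometrically. The frame reconstruction $f=T_\Psi S_\Psi f$ holds in $\Scirc'$ (Theorem~\ref{thm:frame-spaces}), so we only need to show that the series converges in $L^p(dw)$ with the right bounds.

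First, the dyadic blocks. Since $\tau<\infty$, the quantities $b_Q$ can be rearranged in nonincreasing order $b_1^*\ge b_2^*\ge\cdots$. By the Chebyshev-type bound $m(b_m^*)^\tau\le\|b\|_\tau^\tau$ we get $b_m^*\le m^{-1/\tau}\|b\|_\tau$. For $\mu\ge0$ let $E_\mu$ be the set of indices whose rank lies in $[2^\mu,2^{\mu+1})$, so that $\#E_\mu=2^\mu$ and $\sup_{E_\mu}b_Q\le 2^{-\mu/\tau}\|b\|_\tau$. Lemma~\ref{lem:jackson-finite-sum} then gives
\[
 \Big\|\sum_{Q\in E_\mu}\ip{f}{\psi_Q}\psi_Q\Big\|_{L^p(dw)}
 \lesssim 2^{\mu/p}2^{-\mu/\tau}\|b\|_\tau=2^{-\mu s/\Qdim}\|b\|_\tau ,
\]
where we used \eqref{eq:jackson-indices}. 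Since $p\ge1$, the triangle inequality shows that $\sum_\mu$ converges absolutely in $L^p(dw)$, and hence the full series converges in $L^p$. To identify its limit with $f$, we note that $L^p$ convergence implies convergence in $\Scirc'$ (pairing $L^p$ functions with Schwartz test functions is continuous). Moreover, for a fixed $\varphi\in\Scirc$ the unconditional summation in $\Scirc'$ coming from the rapid decay of $\ip{\psi_Q}{\varphi}$ ensures that the order of summation is irrelevant. This yields the embedding $\|f\|_p\lesssim\|b\|_\tau$.

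Second, the Jackson bound. Given $n\ge1$, choose $\mu_0$ with $2^{\mu_0}\le n<2^{\mu_0+1}$, and let $\Lambda$ be the set of the $2^{\mu_0}-1\le n$ indices with largest $b_Q$. These are exactly $\bigcup_{\mu<\mu_0}E_\mu$, ties being broken by the chosen ordering. Then
\[
 \sigma_n(f)_p\le\Big\|\sum_{\mu\ge\mu_0}\sum_{Q\in E_\mu}\ip{f}{\psi_Q}\psi_Q\Big\|_p\lesssim\sum_{\mu\ge\mu_0}2^{-\mu s/\Qdim}\|b\|_\tau\lesssim n^{-s/\Qdim}\|b\|_\tau .
\]
The same argument shows that retaining the $n$ largest $b_Q$ also works. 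The last block may be split, but discarding the $n-2^{\mu_0}+1$ extra terms does not hurt, because Lemma~\ref{lem:jackson-finite-sum} applies to any subset of $E_{\mu_0}$ with the same supremum bound.

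The main obstacle is the justification step: ensuring that the $L^p$-limit of the rearranged series actually equals $f$ in $\Scirc'$. The plan handles it by combining the unconditional convergence in $\Scirc'$ for each test function with $L^p\subset\Scirc'$ continuity. An alternative is to invoke the $L^2$ theory for finite truncations and pass to the limit via density. A minor point to check is the case where $b$ has only finitely many nonzero entries, which is harmless.
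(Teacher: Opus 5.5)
Your proposal is correct and follows essentially the paper's argument: the rearrangement bound $b_m^*\le m^{-1/\tau}\|b\|_{\ell^\tau}$, Lemma~\ref{lem:jackson-finite-sum} applied to rank blocks, geometric summation using $1/p-1/\tau=-s/\Qdim$, and identification of the $L^p$-sum with $f$ through the reconstruction in $\Scirc'$. The differences are cosmetic. The paper splits the tail after the $n$ largest terms into blocks of sizes $n,2n,4n,\dots$ and states the finite-family bound $\|\sum_{Q\in E}c_Q\psi_Q\|_p\lesssim(\sum_{Q\in E}d_Q^\tau)^{1/\tau}$ to obtain unconditional convergence. You instead use absolute rank blocks $[2^\mu,2^{\mu+1})$ and treat a split block as a subset, which works equally well.
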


\begin{proof}
Write $c_Q=\ip{f}{\psi_Q}$ and
$d_Q=|c_Q|w(Q)^{1/p-1/2}$. By
\eqref{eq:jackson-besov-coeff}, $d=\{d_Q\}\in\ell^\tau$.
First, Lemma~\ref{lem:jackson-finite-sum} and a dyadic-block
rearrangement (with block sizes $1,2,4,\ldots$) imply, for every
finite family $E$,
\[
 \Big\|\sum_{Q\in E}c_Q\psi_Q\Big\|_p
 \lesssim\Big(\sum_{Q\in E}d_Q^\tau\Big)^{1/\tau},
 \qquad 0<\tau<p.
\]
Hence the frame series converges unconditionally in $L^p(dw)$.
Reconstruction in Theorem~\ref{thm:frame-spaces} identifies its sum
with $f$ in $\Scirc'$.

Let $(d_m^*)_{m\ge1}$ be the nonincreasing rearrangement of $d$.
Then $d_m^*\le m^{-1/\tau}\|d\|_{\ell^\tau}$.
Let $G_n$ be the canonical frame sum over the $n$ largest $d_Q$.
Divide the remaining terms into successive blocks of cardinalities
$n,2n,4n,\ldots$. Lemma~\ref{lem:jackson-finite-sum} gives
\begin{align*}
 \|f-G_n\|_p
 &\lesssim\sum_{k\ge0}(2^kn)^{1/p}d_{2^kn+1}^*\\
 &\lesssim\|d\|_{\ell^\tau}
 n^{1/p-1/\tau}
 \sum_{k\ge0}2^{k(1/p-1/\tau)}\\
 &\lesssim n^{-s/\Qdim}
 \|f\|_{\widetilde{\dot B}^{s}_{\tau,\tau}(dw;\Psi)}.
\end{align*}
Here $\tau<p$.

This completes our proof.
\end{proof}

\begin{remark}
When $\kappa\equiv0$ one has $w(Q)\simeq 2^{-jN}$, so
\eqref{eq:jackson-besov-norm} reduces to the classical wavelet
coefficient norm of $\dot B^s_{\tau,\tau}$.
For general Dunkl weights, the volume factor depends on the location of
$Q$ and cannot simply be replaced by $2^{js}$.
We make no identification with an intrinsic weighted multiplier space.
\end{remark}

\appendix

\section{Inhomogeneous variants}
For completeness we record that the construction has an inhomogeneous version. Choose real-valued functions $\Phi_0,\Phi\in C_c^\infty([0,\infty))$ whose even extensions to $\mathbb R$ are smooth, such that
\[
 \supp\Phi_0\subset[0,2],\qquad
 \supp\Phi\subset[1/2,2],
\]
and
\[
 \Phi_0(t)^2+\sum_{j\geq1}\Phi(2^{-j}t)^2=1,
 \qquad t\geq0.
\]
Let $P_0$ and $P_j$, $j\geq1$, be the corresponding Dunkl multipliers. Apply the cubature theorem to the low-frequency kernel of $P_0$ and to the annular kernels at each $j\geq1$. This gives a self-dual wavelet frame
\[
 \Psi^{\mathrm{inh}}=
 \{\psi_Q^{(0)}:Q\in\Dcal_J\}
 \cup
 \{\psi_Q: j\geq1,\ Q\in\Dcal_{j+J}\}
\]
with
\[
 f=\sum_{Q\in\Dcal_J}\ip{f}{\psi_Q^{(0)}}\psi_Q^{(0)}
 +\sum_{j\geq1}\sum_{Q\in\Dcal_{j+J}}\ip{f}{\psi_Q}\psi_Q
\]
and the exact $L^2$ Parseval identity. Replacing sums over $j\in\Z$ in Definitions~\ref{def:transform-spaces} and \eqref{eq:b-sequence}--\eqref{eq:f-sequence} by the low-frequency block plus $j\geq1$ yields the inhomogeneous spaces $B^s_{p,q}(dw)$ and $F^s_{p,q}(dw)$. The proof of Theorem~\ref{thm:frame-spaces} is unchanged and gives
\[
 s\in\R,\quad0<p,q\leq\infty
\]
for $B^s_{p,q}(dw)$ and
\[
 s\in\R,\quad0<p<\infty,\quad0<q\leq\infty
\]
for $F^s_{p,q}(dw)$. These definitions are again made directly through the Dunkl transform.

There is an analogous inhomogeneous atomic characterization.  For the
blocks at scales $j\geq1$ one uses exactly the atoms of
Definition~\ref{def:dunkl-atom}; for the single low-frequency block one uses
compactly supported atoms with the same size and Dunkl-derivative bounds but
without moment conditions.  The preceding atomic synthesis argument, with the low-frequency block kept separate, gives the atomic-to-transform estimate.  For the reverse estimate, the annular wavelets are decomposed by Proposition~\ref{prop:Latom-to-atoms}; the low-frequency frame elements are treated by the same orbit-shell decomposition with the moment-correction terms omitted.  Their kernel has arbitrary orbit decay, which gives a summable geometric tail.  Thus the resulting inhomogeneous atomic spaces coincide with $B^s_{p,q}(dw)$ and $F^s_{p,q}(dw)$ under the same sufficient conditions on $M$ and $L_0$.

\section*{Acknowledgements}
This work was supported by the Australian Research Council under grant DP260101083.
Chat GPT 5.6 was used during the preparation of this manuscript to assist with language editing and the checking of some technical estimates. All mathematical ideas, arguments, and final responsibility for the results are those of the author.

\end{document}